\documentclass[12pt]{article}
\usepackage{amsmath,amsfonts,amssymb,amsthm}
\usepackage{graphics}
\usepackage{epsfig}
\usepackage{xcolor}

\usepackage[nottoc]{tocbibind}

\newcommand\numberthis{\addtocounter{equation}{1}\tag{\theequation}}

\newcommand{\ignore}[1]{}

\newtheorem{proposition}{Proposition}
\newtheorem{theorem}{Theorem}
\newtheorem{remark}{Remark}
\newtheorem{lemma}{Lemma}

\newcommand{\R}{\mathbb{R}}
\newcommand{\Z}{\mathbb{Z}}

\newcommand{\F}{\mathcal{F'}}

\newcommand{\ES}{\mathcal{S}}
\def\Ra{{\rm Ra}}
\def\Pr{{\rm Pr}}
\def\Nu{{\rm Nu}}
\def\Re{{\rm Re}}
\xdefinecolor{darkgreen}{rgb}{0,0.55,0}

\author{Antoine Choffrut, Camilla Nobili and Felix Otto}
\title{Upper bounds on Nusselt number at finite Prandtl number}

\begin{document}

\maketitle

\begin{abstract}
We study Rayleigh B\'enard convection based on the Boussinesq approximation.
We are interested in upper bounds on the Nusselt number $\Nu$, the upwards
heat transport, in terms of the Rayleigh number $\Ra$, that characterizes 
the relative strength of the driving mechanism and the Prandtl number 
$\Pr$, that characterizes the strength of the inertial effects.
We show that, up to logarithmic corrections, 
the upper bound $\Nu\lesssim \Ra^{\frac{1}{3}}$ from \cite{CD99}
persists as long as $\Pr\gtrsim\Ra^{\frac{1}{3}}$ and then 
crosses over to $\Nu\lesssim\Pr^{-\frac{1}{2}}\Ra^{\frac{1}{2}}$. 
This result improves the one of Wang \cite{XW} by going beyond the 
perturbative regime $\Pr\gg \Ra$. The proof uses a new way to estimate 
the transport nonlinearity in the Navier Stokes equations capitalizing 
on the no-slip boundary condition. It relies on a new Calder\'on-Zygmund 
estimate for the non-stationary Stokes equations in $L^1$ with a borderline 
Muckenhoupt weight.

\smallskip

\noindent \textbf{Keywords.} Rayleigh-B\'enard convection,
Navier-Stokes equations,
no-slip boundary condition, finite Prandtl number, Nusselt number,
maximal regularity for non-stationary
Stokes equations, 

\end{abstract}

\tableofcontents

\newpage

\section{Introduction}
\subsection{Background}

Rayleigh-B\'enard convection is the buoyancy-driven flow of a fluid heated from below and cooled from above.
This model of thermal convection, besides having some important application in 
geophysics, astrophysics, meteorology, oceanography and engineering, is a paradigm for nonlinear and chaotic dynamics, pattern formation and 
turbulence.
\newline
A fluid is enclosed between two rigid parallel plates separated by a vertical  distance $h$ and held
at different temperatures $T=T_{\rm{bottom}}$ and $T=T_{\rm{top}}$ at height $0$ and $h$ respectively, with $T_{\rm{bottom}}>T_{\rm{top}}$.
\newline
In a $d-$dimensional container we follow the evolution equations of the  velocity vector field $u(x,t)$, 
the  temperature scalar field $T(x,t)$ and the  pressure scalar field $p(x,t)$ where we indicate with $x$ the $d-$dimensional spatial
variable and with $t$ the time variable. In what follows we specify with $x'$ the first $d-1$ horizontal 
components and with $z$ the vertical component of the vector $x$.
In the Oberbeck-Boussinesq approximation,
where variations of the density of the fluid $\rho$ are ignored except in the buoyancy term, $u,T$ and $p$
are governed by 
\begin{equation}\label{NS1}
  \left\{\begin{array}{rclc}
  \partial_t T+u\cdot \nabla T -\chi\Delta T&=& 0  \qquad & {\rm for } \quad 0<z<h\,,\\
   \partial_t u+(u\cdot \nabla) u-\nu\Delta u+\nabla p&=&g\alpha T e_z \qquad & {\rm for } \quad 0<z<h \,,\\
   \nabla\cdot u&=& 0 \qquad & {\rm for } \quad 0<z<h\,,\\
    u&=&0  \qquad & {\rm for } \quad z\in\{0,h\}\,,\\
    u&=&u_0 \qquad & {\rm for } \quad t=0 \,,\\
    T&=&T_{bottom} \qquad & {\rm for } \quad z=0\, ,\\
    T&=&T_{top} \qquad & {\rm for } \quad z=h\, ,\\
   \end{array}\right.
 \end{equation}
 where $e_z$ is the upward unit normal vector and the parameters appearing are
   the kinematic viscosity $\nu$, the acceleration of gravity $g$,
 the thermal expansion coefficient $\alpha$ and  
 the thermometric conductivity $\chi$. We notice that $\chi=\frac{\kappa}{\rho_0 c_p}$
 where $\kappa$ is the thermal conductivity, $\rho_0$ is the constant density and $c_p$ is
 the specific heat at constant pressure ( see \cite{Landau-Lifshitz}, chapter $5$).
\newline
The temperature, which is set to be higher at the bottom
plate than at the top plate, diffuses $(-\Delta T)$ and is
  advected by the velocity $(u\cdot\nabla T)$. 
  The fluid, which to leading order is incompressible ($\nabla\cdot u=0$)
  expands in response to a change in temperature (the origin of the term $\alpha T$),
  which in presence of gravity leads to the buoyancy term $g\alpha Te_z$.
  This accelerates the fluid $(\partial_t u+u\cdot \nabla u$ in Eulerian coordinates).
  The acceleration is eventually balanced by viscosity ($-\nu\Delta u$) in 
  conjunction with the no-slip boundary condition ($u=0$ at $z=0$ and $z=h$).
 The pressure $p$ appears as a Lagrangian multiplier to enforce the 
 divergence-free condition. 
Periodicity in the horizontal variables $x'\in[0,L)^{d-1}$ is imposed for all the functions.
\newline
If the driving forces, as measured by the Rayleigh number defined below, are small, the pure
conduction solution ($T=T_{\rm bottom}-(T_{\rm bottom}-T_{\rm top})\frac{z}{h}$, $u=0$) is the 
global attractor. Above an explicitly known critical Rayleigh number, the conduction solution 
is unstable and the global attractor consists of stationary {\it convection rolls} (see \cite{Ahlers} for historical context).
As the Rayleigh number increases further, the stationary convection rolls become unstable.
For sufficiently high Rayleigh number, the temperature field features boundary layer, from which plumes detach.
This is what, in this context, is called the turbulent regime.

The quantity of interest is the averaged upward heat flux. An inspection
of the advection-diffusion equation for the temperature shows that the
heat flux is given by $\rho_0c_p(Tu-\chi\nabla T)$
\footnote{Writing the equation of the temperature in divergence form 
we isolate the term $Tu-\nabla T$, which turns out
to have the dimensions of heat flux after multiplication with $\rho_0c_p$.}. The appropriately non-dimensionalized
measure of the time-space average of the upward heat flux is given 
by the Nusselt number $\Nu$ defined through
 {\small
 \begin{equation}\label{Nusselt1}
 \Nu=\frac{h}{\rho_0c_p\chi(T_{bottom}-T_{top})}\lim_{t_0\uparrow\infty}\frac{1}{t_0}\int_0^{t_0}\frac{1}{h}\int_0^h\frac{1}{L^{d-1}}\int_{[0,L)^{d-1}}\rho_0c_p(Tu-\chi\nabla T)\cdot e_zdx'dzdt.
 \end{equation}}
Note that the Nusselt number is normalized by the term 
$\rho_0c_p\frac{\chi(T_{bottom}-T_{top})}{h},$ which is the vertical heat flux associated to 
the pure conductive state  $u=0$ and $T=T_{bottom}-(T_{bottom}-T_{top})\frac{z}{h}$.
In other words, the pure conductive solution gives rise to $\Nu=1$.
\newline
For generic initial data $\Nu$ is thought to satisfy 
a "similarity law", that is, to be a function of 
the aspect ratio $\frac{L}{h}$ of the container (provided the artificial 
period $L$ is assimilated to the lateral width of the container) and the two 
non-dimensional parameters $\Ra$ and $\Pr$ :
The Rayleigh number $\Ra$ is defined as 
$$\Ra=\frac{g\alpha(T_{\rm{bottom}}-T_{\rm{top}})h^3}{\nu\chi}$$
and the Prandtl number $\Pr$ is the ratio of two diffusivities
$$\Pr=\frac{\nu}{\chi}.$$
While the Rayleigh number is a system parameter which 
expresses the relative strength of the driving mechanisms
(temperature differences, thermal expansion and gravity), the Prandtl number 
depends only on the fluid (and its absolute temperature), and is thus more arduous to vary in 
the experiments. It can vary from very small number
(e.g. $0.015$  for mercury) to large and very large numbers (e.g. $13.4$ for seawater and $10^{24}$ for Earth's mantle).
\newline
In the non-dimensionalization (\ref{NSE}), the aspect ratio can be assimilated
to the artificial lateral periodicity $L$. This paper does not address
the dependency on the aspect ratio: We will focus on (upper) bounds that
are independent of the period $L$, which amounts to neglecting the 
(limiting) effects of the lateral boundary conditions. 
Likewise, this paper does not address the specifics of two-dimensional flows: 
Our analysis is in fact oblivious to the dimension $d$, 
which in particular amounts to allowing for turbulent boundary layers.

\medskip

There are classical heuristic arguments in favor of two (different) scaling laws for
$\Nu$ in terms of $\Ra$ and $\Pr$: 

\medskip
\noindent
Spiegel in \cite{Sp} realized that in the regime in which $\Nu$ is independent of the quantities
characterizing both dissipative mechanisms, namely the kinematic viscosity $\nu$ 
and the thermometric conductivity $\chi$, the only possible scaling is
$\Nu\sim(\Pr\,\Ra)^\frac{1}{2}$ . Indeed, by the above definitions of $\Ra$, $\Pr$, and $\Nu$, 
this scaling translates into
\begin{equation}\nonumber
\lim_{t_0\uparrow\infty}\frac{1}{t_0}\int_0^{t_0}\frac{1}{h}\int_0^h\frac{1}{L^{d-1}}
\int_{[0,L)^{d-1}}{\textstyle\frac{T-T_{top}}{T_{bottom}-T_{top}}}u\cdot e_zdx'dzdt
\sim \left(g\alpha(T_{bottom}-T_{top})h\right)^\frac{1}{2},
\end{equation}
where we neglected the conductive contribution to the heat flux
and used incompressibility (together with the no-flux boundary conditions) 
in form of $\int_{[0,L)^{d-1}}u\cdot e_zdx'=0$. Now the left-hand side
is an average upward velocity of the warm fluid parcels whereas the right-hand side assumes the form of
$(\mbox{acceleration}\times\mbox{height})^\frac{1}{2}$, where the acceleration
$g\alpha(T_{bottom}-T_{top})$ is the (relative) upward acceleration
of warm fluid parcels due to their density reduction of $\alpha(T_{bottom}-T_{top})$ in the presence
of gravity $g$. In this sense, this scaling is the scaling of free fall, or rather, free rise (in (\ref{rescaling2})
we give a more mathematical heuristic argument for this scaling regime starting from the 
non-dimensionalized equations ).

\medskip
\noindent
When the inertia of the fluid is neglected, i.\ e.\ $\Pr=\infty$,
Malkus \cite{Ma} proposed the following heuristic argument in favor of the scaling 
 $\Nu\sim \Ra^\frac{1}{3}$ : 
It starts with the observation that for $\Ra\gg 1$, in a boundary
layer of (to be determined) thickness $\delta\ll h$, the temperature drops from $T_{bottom}$ to its
average $\frac{1}{2}(T_{bottom}+T_{top})$ and the flow is suppressed. By definition of $\Nu$,
this yields
\begin{equation}\nonumber
\Nu\approx \frac{h}{2\delta},
\end{equation}
so that $\Nu$ is linked to the relative size of the (thermal) boundary layer.
Incidentally, an upper bound motivated by this relation is the starting point in the rigorous treatment,
cf (\ref{N3}).
Here comes the crucial argument of a {\it marginally stable} boundary layer: The actual
size $\delta$ is expected to be proportional to the largest height $h^*$ of container in which
the pure conductive solution $T=T_{bottom}+(T_{top}-T_{bottom})\frac{z}{h^*}$ and $u=0$ is stable.
This critical height $h^*$ (critical in both the sense of linear and nonlinear stability) is 
explicitly known. Here it suffices to appeal to a dimensional argument which yields
that the critical Rayleigh number $\Ra^*$ must be universal, so that $\Ra^*\sim 1$, 
which in view of its definition translates into
$h^*\sim(\frac{g\alpha(T_{bottom}-T_{top})}{\nu\chi})^{-\frac{1}{3}}$ so that we obtain
\begin{equation}\nonumber
\delta\sim \Ra^{-\frac{1}{3}}h.
\end{equation}
The combination yields the desired $\Nu\sim \Ra^\frac{1}{3}$. We note that this implies
{\small\begin{equation}\nonumber
\lim_{t_0\uparrow\infty}\frac{1}{t_0}\int_0^{t_0}\frac{1}{h}\int_0^h\frac{1}{L^{d-1}}
\int_{[0,L)^{d-1}}(Tu-\chi\nabla T)\cdot e_zdx'dzdt
\sim \left(\frac{g\alpha\chi^2(T_{bottom}-T_{top})^4}{\nu}\right)^\frac{1}{3},
\end{equation}}
so that in this regime the heat transport is independent of the container height $h$ (see (\ref{rescaling1})
for a more mathematical heuristic argument).

\medskip
\noindent
Many more scaling regimes for $\Nu$ in the $\Pr$-$\Ra$-plane have been proposed
 on experimental and theoretical grounds
in the physics literature.
By means of  mixing length theory, Kraichnan in \cite{Kra} not only 
reproduced the scalings of Malkus and Spiegel for big and very small $\Pr$ respectively,
but also suggested a third scaling
$\Nu\sim \Pr^{-\frac{1}{4}}\Ra^{\frac{1}{2}}$ for big $\Ra$ and moderately low $\Pr$.
A fairly complete theory has been worked out in \cite{Gro-Lo}. 
It is based on global balance laws (which we also
use in our rigorous treatment (see Section \ref{sec2}) 
on distinguishing the cases of the dissipation dominantly taking place in the bulk or in the boundary layer) 
and on assumptions on the structure of both the thermal and the viscous boundary layer (which becomes relevant
for $\Pr<\infty$). However,
these statements are more speculative when the viscous boundary layer is turbulent rather than laminar.

\medskip

Despite the complexity of the phenomenon of Rayleigh-B\'enard convection in the
turbulent regime, there are rigorous upper bounds of $\Nu$ in terms of $\Ra$ and $\Pr$.
In their 1996 paper \cite{CD96}, Constantin \& Doering among other results
gave an easy argument for $\Nu\lesssim \Ra^\frac{1}{2}$ for all values of $\Pr$. 
On the one hand, this bound is suboptimal for $\Pr\gg 1$ (as our result implies), which is
not surprising since the argument is oblivious to replacing the no-slip boundary condition
by a no-stress boundary condition. On the other hand, it does not capture the 
Spiegel scaling of $\Nu\sim (\Pr \Ra)^\frac{1}{2}$ in the inviscid limit $\Pr\ll 1$.
In their seminal 1999 paper \cite{CD99}, Constantin \& Doering considered the case
of $\Pr=\infty$ and proved $\Nu\lesssim (\Ra\ln^2\Ra)^\frac{1}{3}$. They obtained this bound by
combining global balance laws with the maximum principle for the temperature and a (logarithmically failing)
maximal regularity estimate for the (quasi)-stationary Stokes equations in $L_x^\infty$. The present paper
follows and extends the strategy laid out in this work. In 2006, Charles Doering, Maria Westdickenberg
(n\'ee Reznikoff)  and the last author \cite{DOR} obtained the same bound (with a slightly improved
power of the logarithm) with help of a strategy that is inspired by the above heuristic argument
of marginal stability of the boundary layer, namely the background field method. However, this
method is not capable of giving the optimal bound: On the one hand, this saddle point 
method cannot give a better bound than $(\Ra\ln^\frac{1}{15}\Ra)^\frac{1}{3}$ \cite{NO};
on the other hand, the combination of arguments in \cite{CD99} and \cite{DOR} yields the 
doubly logarithmic bound $\Nu\lesssim (\Ra\ln\ln \Ra)^\frac{1}{3}$, cf.\ \cite{Otto-Seis}.

\smallskip

In the case of $\Pr<\infty$, the lack
of  instantaneous {\it slaving} of the velocity 
 field to the temperature field increases the difficulty 
 in  bounding the convection term $\int\langle u^zT\rangle dz$ 
 in the definition of the Nusselt number and
the background field  method turns out to be no longer fruitful.
Besides \cite{CD96}, there is only one other rigorous result for $\Pr<\infty$: Wang \cite{XW}
proved by a perturbative argument that the Constantin \& Doering 1999 bound $\Nu\lesssim (\Ra\ln^2\Ra)^\frac{1}{3}$ 
persists for $\Pr\gg \Ra$ (see (\ref{perturb}) for an argument why this is the classical scaling regime). 
Like Wang's argument, ours treats the convective nonlinearity $(u\cdot\nabla) u$ perturbatively.
However, there is a difference: We perturb around the {\it non-stationary} Stokes equations
and gain access to $\Ra$-$\Pr$-regimes where the effective {\it Reynolds} number $\Re$ is allowed to be large.
In fact
we work with Leray's solution and thus only appeal to the global energy estimate on the level of the
Navier-Stokes equations, whereas Wang's regime is limited by the use of the small-data regularity theory
for the Navier-Stokes equations and thus $\Re\ll 1$, which in his analysis translates into $\Pr\gg \Ra$ . 
Our more robust strategy allows us to show that the Constantin \& Doering 1999 bound $\Nu\lesssim (\Ra\ln^2\Ra)^\frac{1}{3}$
(in its slightly improved form of $\Nu\lesssim (\Ra\ln \Ra)^\frac{1}{3}$) persists in the much larger
regime $\Pr\gtrsim (\Ra \ln \Ra)^\frac{1}{3}$ and then crosses over to $\Nu\lesssim (\frac{\Ra\ln \Ra}{\Pr})^\frac{1}{2}$,
which can be seen as an interpolation between the marginal stability bound and the Constantin \& Doering 1996
bound as $\Pr$ decreases from large $\Pr=(\Ra \ln \Ra)^\frac{1}{3}$ to moderate $\Pr=1$.
Loosely speaking our analysis just requires small $\Re$ in the thermal boundary layer, not in the entire 
container, for the $(\Ra\ln \Ra)^{\frac{1}{3}}$ scaling to persist.

\subsection{Main Results}
Measuring lengths in units of the layer depth $h$, time in units of the thermal diffusion $h^2/\chi$,
 and temperature on a scale where $T_{\rm{top}}=0$ and $T_{\rm{bottom}}=1$, we 
 non-dimensionalize the problem (\ref{NS1}) and consider
\begin{equation}\label{NSE}
   \left\{\begin{array}{rclc}
   \partial_t T+u\cdot \nabla T -\Delta T&=& 0  \qquad & {\rm for } \quad 0<z<1\,,\\
    \frac{1}{\Pr}(\partial_t u+(u\cdot \nabla) u)-\Delta u+\nabla p&=&\Ra T e_z \qquad & {\rm for } \quad 0<z<1  \,,\\
    \nabla\cdot u&=& 0 \qquad & {\rm for } \quad 0<z<1\,,\\
     u&=&0  \qquad & {\rm for } \quad z\in\{0,1\}\,,\\
     u&=&u_0 \qquad & {\rm for } \quad t=0 \,,\\
     T&=&1 \qquad & {\rm for } \quad z=0\,,\\
     T&=&0 \qquad & {\rm for } \quad z=1\,.\\
    \end{array}\right.
  \end{equation}
 %
 In terms of the dimensionless variables in equation (\ref{NSE}) the Nusselt
 number turns to be 
  {\small
 \begin{equation}\nonumber
 \Nu=\limsup_{t_0\uparrow\infty}\frac{1}{t_0}\int_0^{t_0}\int_0^1\frac{1}{L^{d-1}}\int_{[0,L)^{d-1}}(Tu-\nabla T)\cdot e_zdx'dzdt,
 \end{equation}}
 \newline
 where we consider the limit superior in order to avoid the 
 cases in which the limit does not exist.
\newline
 For the problem (\ref{NSE}) we establish the following result
\begin{theorem}[Bounds on the Nusselt number]\label{th2}\ \\
  Provided the initial data satisfy $T_0\in [0,1]$, $\int|u_0|dx<\infty$ and for $\Ra \gg 1$
  \begin{equation}\label{BOUND-NU}
    \Nu\leq C
       \begin{cases}
	  (\Ra\ln\Ra)^{\frac{1}{3}}\qquad & {\rm for } \quad  \Pr\geq (\Ra\ln \Ra)^{\frac{1}{3}},\\
	 \left(\Pr^{-1}\Ra\ln\Ra\right)^{\frac{1}{2}} \qquad & {\rm for } \quad  \Pr\leq (\Ra\ln \Ra)^{\frac{1}{3}},
       \end{cases}
  \end{equation}
  where $C$ depends only on the dimension $d$.
\end{theorem}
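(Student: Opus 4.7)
I would follow the marginal-stability reduction of Constantin--Doering \cite{CD99}: choose a boundary-layer thickness $\delta>0$, write
\[
\Nu \;\leq\; \frac{1}{2\delta} \;+\; \Bigl|\frac{1}{\delta}\int_0^{\delta}\!\!\int_0^z \langle u^z T\rangle\, dz'\,dz \Bigr|,
\]
(using $\int\langle(Tu-\nabla T)\cdot e_z\rangle\,dz$ is $z$-independent in the time average and $T\leq 1$), and then estimate the boundary-layer convective term. Combining the no-slip boundary condition $u|_{z=0}=0$ with incompressibility, $u^z$ vanishes to second order at $z=0$, so Taylor expansion in $z$ and $\|T\|_\infty\leq 1$ (maximum principle) reduces the correction to controlling the horizontal $L^1_{\rm av}$ norm of $\partial_z^2 u^z(\cdot,0)$. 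The task is therefore to bound that quantity and to choose $\delta$ to balance the two contributions.

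\textbf{Non-stationary Stokes with a borderline weight.} In the $\Pr=\infty$ treatment one applies $L^\infty$-maximal regularity to the stationary Stokes system with source $\Ra\,Te_z$, producing a logarithmic loss. For $\Pr<\infty$ the proposal is to leave the time derivative in place and apply maximal regularity to the non-stationary Stokes problem
\[
\frac{1}{\Pr}\partial_t u - \Delta u + \nabla p \;=\; \Ra\,T e_z \;-\; \frac{1}{\Pr}(u\cdot\nabla)u,\qquad \nabla\cdot u=0,\qquad u|_{z=0,1}=0,
\]
treating the entire right-hand side as a forcing. The announced new ingredient is an $L^1$ Calder\'on--Zygmund estimate for this system against a borderline Muckenhoupt weight $w(z)$ concentrated near $z=0$ (morally $w\sim 1/z$ cut off at scale $\delta$); integration of $|\partial_z^2 u^z|$ against $w$ captures exactly the average of $\partial_z^2 u^z(\cdot,0)$ needed above, while the Muckenhoupt constant produces the logarithmic factor $\ln\Ra$.

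\textbf{Closing the nonlinear estimate.} The buoyancy contribution is trivially bounded by $\Ra\|T\|_\infty\int w\lesssim \Ra\ln\Ra$. For the inertial contribution I would Cauchy--Schwarz against $w$ to obtain
\[
\int w\,|u\cdot\nabla u|\,dx\,dt \;\lesssim\; \bigl(\textstyle\int w^2\bigr)^{\!1/2}\, \|u\|_{L^\infty_t L^2_x}\,\|\nabla u\|_{L^2_{t,x}},
\]
and then appeal to the Leray energy inequality for (\ref{NSE}) together with the identity $\Nu = \langle|\nabla T|^2\rangle$ and the Navier--Stokes power balance $\langle|\nabla u|^2\rangle = \Ra\langle u^zT\rangle \lesssim \Ra\,\Nu$, which delivers $\|\nabla u\|_{L^2_{t,x}}^2 \lesssim \Ra\,\Nu$ in the long-time average and a similar bound for $\|u\|_{L^2}^2$. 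Assembling everything yields an inequality of the schematic form
\[
\Nu \;\lesssim\; \frac{1}{\delta} \;+\; \delta^2\,\Ra\ln\Ra \;+\; \delta^2\,\Pr^{-1}\Ra\,\Nu,
\]
and optimizing in $\delta$ gives the two regimes: when the middle term dominates the last, one recovers $\Nu\lesssim(\Ra\ln\Ra)^{1/3}$; when the last dominates, rearranging gives $\Nu\lesssim(\Pr^{-1}\Ra\ln\Ra)^{1/2}$. The crossover occurs precisely at $\Pr\sim(\Ra\ln\Ra)^{1/3}$ as claimed.

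\textbf{Main obstacle.} The hard part is clearly the weighted $L^1$ maximal-regularity estimate for the non-stationary Stokes equations with no-slip boundary at the borderline Muckenhoupt weight. Standard Calder\'on--Zygmund theory gives $L^p$ bounds only for $1<p<\infty$; pushing to $p=1$ requires exploiting the $A_1$/$A_\infty$-type structure of $w(z)$, and doing so while respecting both the divergence constraint and the no-slip boundary (with the associated pressure adjustments) is delicate. Once that linear estimate is available, the absorption of the inertial term by the Leray energy identity and the optimization in $\delta$ should proceed as sketched.
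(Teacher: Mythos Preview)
Your overall architecture is correct and matches the paper: reduce to a boundary-layer estimate, treat the Navier--Stokes equations as non-stationary Stokes with forcing $\Ra\,Te_z-\Pr^{-1}(u\cdot\nabla)u$, apply a borderline weighted maximal-regularity estimate, control the nonlinearity via the Leray energy balance $\int_0^1\langle|\nabla u|^2\rangle\,dz\le\Ra(\Nu-1)$, and optimize in $\delta$. Two concrete steps, however, do not work as you have written them.

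\textbf{The nonlinear term.} Your inequality $\int w\,|u\cdot\nabla u|\lesssim(\int w^2)^{1/2}\|u\|_{L^\infty_tL^2_x}\|\nabla u\|_{L^2_{t,x}}$ is not correct: for $w\sim 1/z$ the factor $\int w^2$ diverges, and in any case the displayed estimate does not follow from Cauchy--Schwarz. What the paper actually uses is Hardy's inequality together with the no-slip condition:
\[
\int_0^1\langle|(u\cdot\nabla)u|\rangle\frac{dz}{z}
\le\Bigl(\int_0^1\frac{\langle|u|^2\rangle}{z^2}\,dz\Bigr)^{1/2}\Bigl(\int_0^1\langle|\nabla u|^2\rangle\,dz\Bigr)^{1/2}
\lesssim\int_0^1\langle|\nabla u|^2\rangle\,dz\le\Ra\,\Nu.
\]
This is why the weight $1/z$ (rather than a generic Muckenhoupt weight) is forced on you.

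\textbf{The maximal-regularity estimate and the origin of the logarithm.} You propose to obtain the $L^1$-weighted bound by appealing to $A_1/A_\infty$ structure of the weight, with the Muckenhoupt constant producing $\ln\Ra$. This is not how the paper proceeds, and a direct $L^1$-weighted Calder\'on--Zygmund bound for Stokes with no-slip is not available. The paper's mechanism is different: one performs a \emph{horizontal} Littlewood--Paley decomposition $u=\mathbb{P}_{<}u+\sum_{j_1\le j\le j_2}\mathbb{P}_j u+\mathbb{P}_{>}u$. For each band $\mathbb{P}_j$, the non-stationary Stokes equations admit a genuine (logarithm-free) maximal-regularity estimate in the \emph{interpolation norm}
\[
\|f\|_{(0,1)}=\inf_{f=f_0+f_1}\Bigl(\sup_{0<z<1}\langle|f_0|\rangle+\int_0^1\langle|f_1|\rangle\frac{dz}{z(1-z)}\Bigr),
\]
with $f_0,f_1$ band-limited; the two pieces of this norm are exactly suited to the buoyancy term (via $\|T\|_\infty\le 1$) and to the nonlinearity (via the Hardy estimate above). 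The logarithm then arises not from the weight but from summing over the $j_2-j_1\sim\ln\Ra$ intermediate bands, while the extreme bands $\mathbb{P}_<$ and $\mathbb{P}_>$ are handled directly by Poincar\'e/Bernstein-type inequalities. This also means both the buoyancy and the inertial contributions carry the $\ln\Ra$ factor, so your schematic inequality should read $\Nu\lesssim\delta^{-1}+(j_2-j_1)\delta^2(\Ra+\Pr^{-1}\Ra\,\Nu)$; otherwise the second regime would come out as $(\Pr^{-1}\Ra)^{1/2}$ rather than $(\Pr^{-1}\Ra\ln\Ra)^{1/2}$.
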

\noindent
In our analysis, the crucial no-slip boundary condition is both a blessing and a curse,
as we shall presently explain.
The no-slip boundary condition is a {\it blessing} because, via Hardy's inequality, it gives us good control of
the convective nonlinearity $(u\cdot\nabla) u$ near the boundary in terms of the average viscous dissipation 
$\int_{0}^{1}\langle|\nabla u|^2\rangle dz$ \footnote{See Section \ref{notations} for notations.},
which is the physically relevant quantity (and the only bound at hand for the Leray solution)
see (\ref{N2}). 
This yields control of $(u\cdot\nabla) u$ in an $L^1$-type space with the weight $\frac{1}{z(1-z)}$.
Hence a maximal regularity theory for the non-stationary Stokes equations with respect to this norm
is required. Since this norm is borderline for Calder\'on-Zygmund estimates (both because the exponent and
the weight are borderline), maximal regularity ``fails logarithmically'' and can only be recovered 
under bandedness assumptions (i.\ e.\ a restriction to a packet of wave numbers in Fourier space)
--- this is the source of the logarithm.
It is in this maximal regularity theory where the {\it curse} of the no-slip boundary condition
appears: As opposed to the no-stress boundary condition in the half space, the no-slip boundary condition
does not allow for an extension by reflection to the whole space, and thereby the use of simple kernels
or Fourier methods also in the normal variable. 
The difficulty coming from the the no-slip
boundary condition in the non-stationary Stokes equations when deriving maximal regularity estimates is of course 
well-known; many techniques have been developed to derive Calder\'on-Zygmund estimates despite this difficulty.
In the {\it{half space}} Solonnikov in \cite{solonnikov1964} has constructed a solution formula 
for (\ref{STOKES-STRIP}) with zero initial data via the Oseen an Green tensors. 
 An easier and more compact representation of the solution to the problem (\ref{STOKES-STRIP}) with zero forcing term and non-zero initial value was 
 later given by Ukai in \cite{Ukai} by using a different method. Indeed he could  write an explicit formula
 of the solution operator as a composition of Riesz' operators and solutions operator for the heat and Laplace's
equation. This formula is an effective tool to get $L^p-L^q$ ($1<q,p<\infty$) estimates for the solution and its derivatives.
 In the case of {\it exterior domains}, Maremonti and Solonnikov \cite{Mar-Sol} derive
  $L^p-L^q$ ($1<q,p<\infty$) estimates for (\ref{STOKES-STRIP}), going through estimates 
 for the extended solution in the half space and in the whole space.
   In particular in the half space they propose a decomposition of (\ref{STOKES-STRIP})
 with non-zero divergence equation.
The book of Galdi \cite{galdi} provides with a complete treatment of the classical theory and
 results on the non-stationary Stokes equations and Navier-Stokes equations.

The new element here is that we need maximal regularity in the {\it borderline} space
$L^1(dtdx'\frac{1}{z(1-z)}dz)$, and in $L^\infty_z(L^1_{t,x'})$ (the latter space coming
from the original argument in 1999 paper of Constantin \& Doering and pertaining to the buoyancy term). As mentioned,
these borderline Calder\'on-Zygmund estimates can only hold under
bandedness assumption. 

 \begin{theorem}[Maximal regularity in the strip]\label{th1}\ \\ 
      There exists $R_0\in(0,\infty)$ depending only on $d$ and $L$ such that the following holds.     
      Let $u,p,f$ satisfy           
      \begin{equation}\label{STOKES-STRIP}
	\left\{\begin{array}{rclc}
	 \partial_t u-\Delta u+\nabla p&=&f  \qquad & {\rm for } \quad 0<z<1 \,,\\
	  \nabla\cdot u&=& 0                 \qquad & {\rm for } \quad 0<z<1\,,\\
	   u &=& 0                           \qquad & {\rm for } \quad z\in\{0,1\}\,,\\
	   u &=& 0                           \qquad & {\rm for } \quad t=0\,.\\
	  \end{array}\right.        
         \end{equation}	
      Assume $f$ is horizontally  band-limited , i.e
      \begin{equation}\label{BANDLIM}
      \F f(k',z,t)=0 \mbox{ unless } 1\leq R|k'|\leq 4  \mbox{ where } R<R_0.
      \end{equation}
      Then,
      \begin{equation}\label{MRE}
      ||(\partial_t -\partial_z^2)u'||_{(0,1)}+||\nabla'\nabla u'||_{(0,1)}+||\partial_t u^z||_{(0,1)}+||\nabla^2 u^z||_{(0,1)}+||\nabla p||_{(0,1)}\lesssim||f||_{(0,1)},
      \end{equation}
      where $||\cdot||_{(0,1)}$ denotes the norm
      \begin{equation}\label{NORM-STRIP}
       ||f||_{(0,1)}:=||f||_{(R,(0,1))}=\inf_{f=f_0+f_1}\left(\sup_{0<z<1}\langle |f_0|\rangle+\int_0^{1}\langle |f_1|\rangle \frac{dz}{(1-z)z}\right)\,,
      \end{equation}
      where $f_0$ and $f_1$ satisfy the bandedness assumption (\ref{BANDLIM}).
  \end{theorem}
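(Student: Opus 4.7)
My plan is to exploit linearity and symmetry to reduce the strip problem to a half-space problem, then invoke an explicit representation of the non-stationary Stokes solution to establish the borderline Calder\'on--Zygmund estimate, using the bandedness (\ref{BANDLIM}) to kill the logarithmic divergence characteristic of the borderline setting. In detail: since $\|\cdot\|_{(0,1)}$ is an infimum over decompositions $f=f_0+f_1$, linearity of the Stokes system lets me treat separately the two cases $f=f_0\in L^\infty_z(L^1_{x',t})$ and $f=f_1\in L^1(dtdx'\,dz/(z(1-z)))$, with each output placed back into the same piece of the sum-norm. The symmetry $z\mapsto 1-z$ (combined with the sign change on $u^z$) reduces attention to the half-layer $\{0<z<\tfrac12\}$, where the weight is $1/z$. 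Because $R<R_0\ll 1$ is the horizontal length scale, the bandedness forces the Stokes solution to decay exponentially on that scale away from each boundary; a smooth vertical cutoff at $z=\tfrac12$ therefore exchanges the strip problem for the maximal-regularity estimate of the non-stationary Stokes equations on the half-space $\{z>0\}$ with no-slip data at $z=0$ and weight $1/z$, modulo an exponentially small interaction between the two boundary layers.

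For the half-space problem I would use either Ukai's formula or Solonnikov's Green tensor to express the solution as compositions of the one-dimensional heat semigroup in $z$, Poisson extensions, and Riesz transforms in the horizontal-temporal variables $(x',t)$. After horizontal Fourier transform the problem decouples in $k'$, yielding a family of one-dimensional Stokes problems in $z$ whose kernels have characteristic exponential decay length $1/|k'|\sim R$. The weighted estimate reduces, after standard duality or direct kernel analysis, to pointwise kernel bounds of the form
\begin{equation}\nonumber
\sup_{z'>0}\int_0^\infty |K(z,z')|\,\frac{dz}{z}\lesssim 1
\end{equation}
for the relevant second-derivative kernels appearing on the left-hand side of (\ref{MRE}); the bandedness converts what would otherwise be a logarithmic divergence at $z\to 0$ and $z\to\infty$ into a bounded constant $\lesssim \log(1/R_0)$, which is absorbed into $C$. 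The $L^\infty_z(L^1_{x',t})$ component is handled analogously, with the roles of $z$ and $(x',t)$ exchanged: an $L^1_{x',t}$ Calder\'on--Zygmund estimate is applied fiberwise in $z$, the bandedness again eliminating the usual logarithmic loss.

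The main obstacle is the genuinely borderline Calder\'on--Zygmund step: singular integrals do not map $L^1$ to $L^1$, and the weight $1/z$ lies precisely on the boundary of Muckenhoupt $A_1$. I expect to separate the kernel into a regular part estimated pointwise and a singular part in $(x',t)$, to which I apply a H\"ormander cancellation condition coming from the Riesz transforms, combined with the frequency localization of the input. Here the no-slip condition is genuinely harmful: unlike no-stress, it precludes odd/even reflection across $z=0$ that would reduce matters to the whole space. Consequently the Ukai/Solonnikov kernels carry Poisson-like boundary correctors whose interaction with the weight $1/z$ must be tracked term by term, and the tight pointwise kernel bound must be established in a form compatible with the H\"ormander cancellation in $(x',t)$. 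This verification, piggybacking on the bandedness assumption to close each borderline integral, is the technical heart of the argument.
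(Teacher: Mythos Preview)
Your overall architecture matches the paper: localize the strip problem to the half-space via a vertical cutoff, then establish a half-space maximal regularity estimate through explicit solution formulas, with bandedness closing the borderline integrals. Two technical points, however, are executed differently.

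\textbf{Handling the cutoff errors.} You invoke an ``exponentially small interaction between the two boundary layers'' coming from bandedness-induced decay. The paper does not argue this way. Writing $\tilde u=\eta u$, $\tilde p=\eta p$ with $\eta$ a cutoff for $[0,\tfrac12)$ in $[0,1)$, one obtains a half-space Stokes system with \emph{inhomogeneous} divergence $\tilde\rho=(\partial_z\eta)u^z$ and forcing $\tilde f=\eta f-2(\partial_z\eta)\partial_z u-(\partial_z^2\eta)u+(\partial_z\eta)p\,e_z$. These commutator terms involve the unknowns $u,p$ and are not a priori small. The paper applies the half-space estimate (which must therefore allow $\nabla\cdot u=\rho\neq 0$), bounds the resulting right-hand side by $\|f\|_{(0,1)}+\|u\|_{(0,1)}+\|\partial_z u\|_{(0,1)}+\|p\|_{(0,1)}+\cdots$, and then uses bandedness in the form $\|u\|_{(0,1)}\le R^2\|(\nabla')^2 u\|_{(0,1)}$, $\|p\|_{(0,1)}\le R\|\nabla' p\|_{(0,1)}$, etc., to absorb all such terms into the left-hand side for $R<R_0$ small. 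This is a closed absorption argument, not an a priori smallness claim; your exponential-decay heuristic is morally correct but would have to be converted into exactly this bootstrap to close rigorously.

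\textbf{The half-space solution formula.} Rather than Ukai's formula or the Solonnikov tensor, the paper factors $\Delta=(\partial_z+(-\Delta')^{1/2})(\partial_z-(-\Delta')^{1/2})$ and writes the solution operator as a four-step composition: a backward fractional diffusion in $z$ for an auxiliary $\phi$, a Dirichlet heat equation for $v^z$, a forward fractional diffusion for $u^z$, and a Dirichlet heat equation for $v'$. Each step involves only a Poisson kernel in $x'$ or a heat kernel in $(x',z,t)$, for which the weighted $L^1(dz/z)$ and $\sup_z L^1_{x',t}$ estimates are proved directly and separately. Bandedness enters only to make integrals like $\int_0^\infty\min\{R^{-1},R(z_0-z)^{-2}\}\,dz_0$ and $\int_0^\infty\min\{R^{-1}\tau^{-1/2},R\tau^{-3/2}\}\,d\tau$ converge, with constants \emph{independent of $R$}---not $\lesssim\log(1/R_0)$ as you write. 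The interpolation norm is preserved step by step, which is precisely why no logarithm appears; your plan to estimate the $f_0$ and $f_1$ pieces into separate halves of the output norm is not quite what happens (for instance, for the mixed derivative $\nabla'\partial_z u$ in the heat step, the $L^1(dz/z)$ input is split further and part of it lands in the $\sup_z$ piece of the output).
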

  \noindent
See Section \ref{notations} for notation.
\newline
Our analysis offers two insights:

\medskip
\noindent
It turns out that for the maximal regularity estimate for the quantity of interest, namely the second vertical derivative
$\partial_z^2u^z$ of the vertical velocity component $u^z=u\cdot e_z$, bandedness only in the {\it horizontal} variable $x'$ is required.
This is extremely convenient, since the horizontal Fourier transform (or rather, series), 
with help of which bandedness is expressed, is compatible with the lateral periodic boundary conditions.

\medskip
\noindent
It turns out that maximal regularity is naturally expressed in terms of the {\it interpolation}
between the two norms of interest $L^1(dtdx'\frac{1}{z(1-z)}dz)$ and $L^\infty_z(L^1_{t,x'})$.
\footnote{See Section \ref{notations} for notations.}
This way, one avoids the logarithm one would expect to be the price
of the borderline weight $\frac{1}{z(1-z)}$. It is a pleasant coincidence that the norm
$L^\infty_z(L^1_{t,x'})$ arises for two unrelated reasons: It is needed to estimate the buoyancy term $Te_z$ 
driving the Navier-Stokes equations and it is the natural partner of $L^1(dtdx'\frac{1}{z(1-z)}dz)$
in the maximal regularity estimate.


Aside from their application to this problem (see Section \ref{ProofTh1})
 all the estimates in Theorem \ref{th2} might have 
an independent interest since they
show the full extent of what one can 
obtain under the horizontal bandedness assumption only.
\medskip

{\bf Acknowledgments}: We thank Charles Doering for many stimulating discussions on Rayleigh-B\'enard
convection in general, and, more specifically, for bringing the $\Pr$-scaling to our attention.

\subsection{Idea of the proof}\label{sec2}
We first notice that for the maximum principle applied to the 
temperature equation we have 
\begin{equation}\label{Max-Princ}
 \mbox{if } T_0\in [0,1] \; \mbox{ then }\;  ||T||_{L^{\infty}}\leq 1\,,
\end{equation}
which furnishes us an a-priori bound 
on the temperature $T$.
\newline
One can verify that $\Nu$ as defined in (\ref{LTaHA}) 
satisfies 
\begin{equation}\label{heatflux-property}
 \Nu=\langle Tu^z-\partial_z T \rangle \;\; \forall z\in (0,1)\,
\end{equation}
(see \cite{Otto-Seis}) and thus in particular
\begin{equation}\label{Nusselt}
 \Nu=\int_0^1\langle(Tu-\nabla T)\cdot e_z\rangle dz.
\end{equation}

As a side remark, we now argue that the scaling laws predicted by Malkus and Spiegel, respectively, can be simply deduced 
by rescaling the equations in the limiting case when the viscosity term wins over the inertial term
 and vice versa.
 On one hand, if  we assume that the inertial term is negligible (setting $\Pr=\infty$)
the equation (\ref{NSE}) reduces to
\begin{equation*}
   \left\{\begin{array}{rclc}
   \partial_t T+u\cdot \nabla T -\Delta T&=& 0  \,,\\
    -\Delta u+\nabla p&=&\Ra T e_z \,,\\
    \nabla\cdot u&=& 0 \,.\\
    \end{array}\right.
  \end{equation*}
Rescaling this equation according to
\begin{equation}\label{rescaling1}
x=\Ra^{-\frac{1}{3}}\hat x, \; t=\Ra^{-\frac{2}{3}}\hat t,\; u=\Ra^{\frac{1}{3}}\hat u, \; p=\Ra^{\frac{2}{3}}\hat p \; \mbox{ and thus } \;\Nu=\Ra^{\frac{1}{3}}\widehat{\Nu}
\end{equation}
we end up with the parameters-free system  
\begin{equation*}
   \left\{\begin{array}{rclc}
   \partial_{\hat t}  T+\hat u\cdot \hat{\nabla}  T -\hat{\Delta}  T&=& 0  \,,\\
    -\Delta \hat u+\nabla \hat p&=&  T e_z \,,\\
    \hat{\nabla}\cdot \hat u&=& 0 \,,\\
    \end{array}\right.
  \end{equation*}
  which naturally lives in the half space.
  Since for the latter system, it is natural to expect that the heat flux is universal, i.e. $\widehat{\Nu}\approx 1$, we obtain
  $\Nu\sim \Ra^{\frac{1}{3}}.$
  \newline
On the other hand, if we rewrite the system (\ref{NSE}) neglecting the diffusivity and the viscosity term
\begin{equation*}
   \left\{\begin{array}{rclc}
   \partial_t T+u\cdot \nabla T &=& 0 \,,\\
    \frac{1}{\Pr}(\partial_t u+u\cdot \nabla u)+\nabla p&=&\Ra T e_z  \,,\\
    \nabla\cdot u&=& 0  \,,\\
    \end{array}\right.
  \end{equation*}
 and we rescale according to
 \begin{equation}\label{rescaling2}
 t=\frac{1}{(\Pr\Ra)^{\frac{1}{2}}}\hat t, \; u=(\Pr\Ra)^{\frac{1}{2}}\hat u,\; p=\Ra\hat p \; \mbox{ and thus }\; \Nu=(\Pr\Ra)^{\frac{1}{2}}\widehat{\Nu} 
 \end{equation}
 we end up with the system
 \begin{equation*}
 \left\{\begin{array}{rclc}
   \partial_{\hat t}  T+\hat u\cdot \nabla  T &=& 0 \,,\\
   \frac{1}{\Pr}(\partial_{\hat t}  \hat u+\hat u\cdot \nabla \hat u)+\nabla \hat p&=& \Ra T e_z  \,,\\
 \nabla\cdot \hat u&=& 0  \,.\\
 \end{array}\right.
\end{equation*}
Imitating the previous argument
we can conclude that   $\Nu\sim \Pr^{\frac{1}{2}}\Ra^{\frac{1}{2}}.$
\newline

From the equations of motion we can easily derive useful properties
and representations of the 
Nusselt number:
Testing the equation of the temperature  with $T$ and using 
(\ref{heatflux-property}) for $z=0$ we get 
\begin{equation}\label{N1}
 \Nu=\int_0^1\langle|\nabla T|^2\rangle dz,
\end{equation}
while testing  the Navier Stokes equations with $u$, using the incompressibility
and the boundary conditions for $u$, we have the energy inequality 
\begin{equation}\label{N2}
 \int_0^{1}\langle|\nabla u|^2\rangle dz\leq \Ra(\Nu-1)
\end{equation}
for Leray solutions.
\newline
Property (\ref{heatflux-property}) together with the boundary conditions
for $T$ and the maximum principle (\ref{Max-Princ})
yield
\begin{equation}\label{N3bis}
 \Nu\leq \frac{1}{\delta}\int_0^{\delta}\langle Tu^z\rangle dz+\frac{1}{\delta}\,,
\end{equation}
where the vertical average is taken over an arbitrary but
small boundary layer  of thickness $\delta$.
Moreover, using once more the maximum principle for the temperature (\ref{Max-Princ}) we get
\begin{equation}\label{N3}
 \Nu\leq \frac{1}{\delta}\int_0^{\delta}\langle|u^z|\rangle dz+\frac{1}{\delta}\,.
 \end{equation}
 
The first rigorous upper bound in the case $\Pr<\infty$ was derived in 1996 
by Constantin and Doering \cite{CD96}.
Exploiting the vanishing of the normal velocity at the boundaries
they could prove 
$$\Nu\lesssim\Ra^{\frac{1}{2}},$$
where with the symbol $\lesssim$  we denote the symbol $\leq$ up to
universal constants.
One hopes to get a better estimate exploiting 
the incompressibility condition which, in conjunction with the full no-slip boundary condition, implies 
 $$\partial_z u^z=0 \mbox{ at } z=0,1.$$
To illustrate this idea in a simpler situation, we first consider
the case of $\Pr=\infty$ as treated by Constantin and Doering (following the argument in \cite{Otto-Seis}).
Starting with inequality (\ref{N3}) and using Jensen's inequality in the form of 
$\left|\frac{d^2}{dz^2}\langle|u^z|\rangle\right|\leq \langle |\partial_z^2 u|\rangle$ we get
\begin{equation}\label{N4}
 Nu\leq \delta^2 \sup_z\langle|\partial_z^2 u^z|\rangle+\frac{1}{\delta}\,.
\end{equation}
Note that for $\Pr=\infty$, the velocity is instantaneously slaved to the temperature
via the stationary Stokes equations
\begin{equation*}
 \left\{\begin{array}{rclc}
   -\Delta u+\nabla  p&=& \Ra T e_z\,,\\
     \nabla\cdot u&=&0\,,\\
 \end{array}\right.
\end{equation*}
with the no-slip boundary condition.
Loosely speaking, the theory of maximal regularity states that for any "reasonable "
norm $||\cdot||$, one has 
$$||\nabla^2 u||\lesssim ||\Ra T e_z||.$$
However the Calder\'on-Zygmund theory fails for $||\cdot||=\sup_{z}\langle|\cdot|\rangle.$
Nevertheless, Constantin and Doering proved that, up to logarithms, one indeed has
$$\sup_z\langle|\partial_z^2 u^z|\rangle\lesssim \sup_z\langle|\Ra Te_z|\rangle\,.$$
By the maximum principle for the temperature (\ref{Max-Princ}),
the last bound implies
$$\sup_z\langle|\partial_z^2 u^z|\rangle\lesssim \Ra$$
and inserting this result into (\ref{N4}), we get
$$\Nu\lesssim \delta^2 \Ra+\frac{1}{\delta}\,.$$ 
The minimum occurs for $\delta\sim \Ra^{-\frac{1}{3}}$ and the resulting bound is, up to logarithmic corrections,
$\Nu\lesssim \Ra^{\frac{1}{3}}$.

In this paper we will consider the case $\Pr<\infty$.
\newline
We perturb the non-stationary Stokes equations, bringing {\it only} the nonlinear term to the right hand side
\begin{equation}
\label{non-stat}\frac{1}{\Pr}\partial_t u-\Delta u+\nabla p=\Ra T e_z -\frac{1}{\Pr}(u\cdot \nabla) u.
\end{equation}
\newline
As a side remark we now argue why \begin{equation}\label{perturb}\Pr\gg\Ra\end{equation} amounts to the classical
perturbative regime for Navier-Stokes equations. 
The classical perturbative argument goes as follows: One seeks a norm 
\newline
$||\cdot||$ in which the maximal regularity estimate
for the non-stationary Stokes equations (\ref{non-stat}) holds, yielding
$$||\nabla^2 u||\lesssim ||\Ra Te_z-\frac{1}{\Pr}(u\cdot \nabla)u||.$$
This norm has to be strong enough to control the convective nonlinearity 
in the sense of 
$$||(u\cdot \nabla)u||\lesssim ||\nabla^2u||^2.$$
In the application the norm should be also sufficiently weak 
so that (\ref{Max-Princ}) translates into
$$||\Ra Te_z||\lesssim \Ra.$$ 
\newline
The combination yields the estimate 
$$||\nabla^2 u||\lesssim\Ra+ \frac{1}{\Pr}||\nabla^2u||^2,$$
which is nontrivial only when  $\Pr\gg\Ra$.
\newline
Our analysis does not attempt to buckle via the classical perturbation
argument but instead uses the dissipation bound (\ref{N2}) to estimate 
the convective nonlinearity:
By the Cauchy-Schwarz inequality, Hardy's inequality and capitalizing once more on the no-slip boundary 
 condition we have
 \begin{equation}\label{Key}
  \int_0^1 \langle|(u\cdot \nabla)u|\rangle \frac{dz}{z} \lesssim \left(\int_0^1\langle|u|^2\rangle\frac{dz}{z^2}\right)^{\frac{1}{2}}\left(\int_{0}^{1}\langle|\nabla u|^2\rangle dz\right)^{\frac{1}{2}}
  \lesssim \int_0^1 \langle|\nabla u|^2\rangle dz ,
 \end{equation}
 which, using (\ref{N2}), implies
   \begin{equation}\label{Key-estim}\int_0^1 \langle|(u\cdot \nabla)u|\rangle \frac{dz}{z} \lesssim \Nu\Ra.\end{equation}        
It is this estimate that motivates the maximal regularity theory in the norm 
$||\cdot ||=\int_0^{\infty}\langle\cdot\rangle \frac{dz}{z}$.

In order to bound the right hand side of (\ref{N3}) we split the solution to the 
Navier-Stokes equations $u$ as 
$$u=u_{CD}+u_{NL}+u_{IV},$$ where 
$u_{CD}$ satisfies the non-stationary Stokes equations with the buoyancy force as right hand side 
\footnote{The stationary version of this problem was already analyzed by Constantin and Doering
in the seminal paper of 1999. This motivates the subscript CD.}
 \begin{equation}\label{M1}
  \left\{\begin{array}{rclc}
         \frac{1}{\Pr}\partial_t u_{CD}-\Delta u_{CD}+\nabla p_{CD}&=&\Ra T e_z \qquad & {\rm for } \quad 0<z<1 \,,\\
          \nabla\cdot u_{CD}&=& 0 \qquad & {\rm for } \quad 0<z<1\,,\\
           u_{CD}&=&0 \qquad & {\rm for } \quad z\in\{0,1\}\,,\\
           u_{CD}&=&0 \qquad & {\rm for } \quad t=0\,,\\
   \end{array}\right.
 \end{equation}
  $u_{NL}$ satisfies the non-stationary Stokes equations with the nonlinear term as right hand side
  \footnote{The subscript NL stands for non-linear. Indeed only in this
 equation the non-linear term of the Navier Stokes equations is appearing as 
 right hand side.}
 \begin{equation}\label{M2}
  \left\{\begin{array}{rclc}
       \frac{1}{\Pr}\partial_t u_{NL}-\Delta u_{NL}+\nabla p_{NL}&=&-\frac{1}{\Pr}(u\cdot \nabla) u \qquad & {\rm for } \quad 0<z<1 \,,\\
      \nabla\cdot u_{NL}&=& 0 \qquad & {\rm for } \quad 0<z<1\,.\\
       u_{NL}&=&0 \qquad & {\rm for } \quad z\in\{0,1\}\,,\\
      u_{NL}&=&0 \qquad & {\rm for } \quad t=0\,\\
     \end{array}\right.
 \end{equation} 
    and $u_{IV}$ satisfies the
    non-stationary Stokes equations with zero forcing term and non-zero initial values
    \footnote{ The subscript IV stands for initial value. }
     \begin{equation}\label{M3}
       \left\{\begin{array}{rclc}
               \frac{1}{\Pr}\partial_t u_{IV}-\Delta u_{IV}+\nabla p_{IV}&=& 0 \qquad & {\rm for } \quad 0<z<1 \,,\\
        \nabla\cdot u_{IV}&=& 0 \qquad & {\rm for } \quad 0<z<1\,,\\
         u_{IV}&=&0 \qquad & {\rm for } \quad z=0\,,\\
         u_{IV}&=&u_0 \qquad & {\rm for } \quad t=0\,.\\
        \end{array}\right.
     \end{equation} 
Inserting the decomposition $u=u_{CD}+u_{NL}+u_{IV}$ into the bound (\ref{N3}) for the Nusselt number, we have 
\begin{equation}\label{N5}
\begin{array}{rclc}
 \Nu&\leq& \frac{1}{\delta}\int_0^{\delta}\langle|u^z|\rangle dz+\frac{1}{\delta}\\
&\leq& \sup_{z\in(0,\delta)}\langle|u_{CD}^z|\rangle+
\delta\int_0^{\delta}\langle|\partial_z^2 u_{NL}^z|\rangle dz+\delta^{-\frac{1}{2}}\left(\int_0^{\delta}\langle|u_{IV}^z|^2\rangle dz\right)^{\frac{1}{2}}+\frac{1}{\delta}\\
&\leq& \delta^2 \left( \sup_{z\in(0,\delta)}\langle|\partial_z^2 u_{CD}^z|\rangle+ 
\int_0^{\delta}\langle|\partial_z^2 u_{NL}^z|\rangle \frac{dz}{z}\right)+\delta^{-\frac{1}{2}}\left(\int_0^{\delta}\langle|u_{IV}^z|^2\rangle dz\right)^{\frac{1}{2}}+\frac{1}{\delta}.
\end{array}
\end{equation}
%
%
We notice that  
\begin{equation}\label{N67}
\int_0^{\delta}\langle|u_{IV}^z|^2\rangle dz=0\,.
\end{equation}
Indeed testing the equation (\ref{M3}) with $u_{IV}$ we find that 
$$\int_0^{t_0}\int_{x'}\int_{z}|\nabla u_{IV}(x',z,t)|^2 dzdx'dt\leq \int_{x'}\int_{z}|u_0(x',z)|^2 dzdx'$$
and in turn by the Poincar\'e inequality and passing to limits we get
 $$\int_0^1\langle|u_{IV}|^2\rangle dz= 0.$$
\newline
On the one hand, for equation (\ref{M1}) we expect the following logarithmically failing maximal regularity bound
\begin{equation}\label{N6}
\sup_{z\in(0,1)}\langle|\partial_z^2 u_{CD}^z|\rangle\lesssim \Ra,
\end{equation}
just as for the case of $\Pr=\infty$.
\newline
On the other hand, the problem of bounding the term $\int_0^{1}\langle|\partial_z^2 u_{NL}^z|\rangle \frac{dz}{z}$ in (\ref{N5})
requires new techniques.
Nevertheless  we expect    
\begin{equation}\label{NM}
\int_0^{\delta}\langle|\partial_z^2 u_{NL}^z|\rangle \frac{dz}{z}\lesssim
\frac{1}{\Pr}\int_0^1 \langle|(u\cdot \nabla)u|\rangle \frac{dz}{z}\,,
\end{equation}
 up to logarithmic corrections.
Using (\ref{Key-estim}) we obtain
 \begin{equation}\label{N7}
  \int_0^{\delta}\langle|\partial_z^2 u_{NL}^z|\rangle \frac{dz}{z}\lesssim\frac{1}{\Pr}\Nu\Ra\,.
 \end{equation}
 Inserting  (\ref{N67}), (\ref{N6}) and (\ref{N7}) into the bound (\ref{N5}) for the Nusselt number
 and ignoring logarithmic correction factors, we  get 
 \begin{eqnarray*}
 \Nu
  \lesssim \delta^2 \Ra(1+\frac{1}{\Pr} \Nu)+\frac{1}{\delta}.
\end{eqnarray*}
After choosing $\delta\sim \left(\Ra(1+\frac{\Nu}{\Pr})\right)^{-\frac{1}{3}}$ 
and applying Young's inequality, we have
$$\Nu\approx \Ra^{\frac{1}{3}}+\left(\frac{\Ra}{\Pr}\right)^{\frac{1}{2}}\,,$$
which implies, up to logarithms, 

\begin{equation}\label{N-EST}
         \Nu\lesssim \begin{cases}
            \Ra^{\frac{1}{3}} & \mbox{ for } \Pr\geq \Ra^{\frac{1}{3}}\,,\\
            \Pr^{-\frac{1}{2}}\Ra^{\frac{1}{2}} & \mbox{ for } \Pr\leq \Ra^{\frac{1}{3}}\,.\\     
           \end{cases}
\end{equation}

\subsection{Notations}\label{notations}

The $(d-1)-$dimensional torus:
\vspace{0.1cm}\newline
We denote with $[0,L)^{d-1}$  the $(d-1)-$dimensional torus of lateral size $L$.
\newline
The spatial vector:

$$x=(x',z)\in [0,L)^{d-1}\times \R \,.$$
The velocity vector field:

$$u=(u',u^z)\in \R^d  \mbox{ where } u'\in \R^{d-1}  \mbox{ and } u^z\in \R .$$
The horizontal average:

\begin{equation*}
   \langle\cdot\rangle'=\frac{1}{L^{d-1}}\int_{[0,L)^{d-1}} \;\cdot\;\;\; dx'\,.
\end{equation*}
Long-time and horizontal average:
\begin{equation}\label{LTaHA}
  \langle\cdot\rangle= \limsup_{t_0\rightarrow \infty}\frac{1}{t_0}\int_0^{t_0}\langle \;\cdot \;\rangle'  dt\,.
\end{equation}
Convolution in the horizontal direction:

\begin{equation*}
 f\ast_{x'}g(x')=\int_{[0,L)^{d-1}}f(x'-\widetilde{x'})g(\widetilde{x'})d\widetilde{x'}\;.
\end{equation*}
Convolution in the whole space:

\begin{equation*}
 f\ast g(x)=\int_{\R}\int_{[0,L)^{d-1}}f(x'-\widetilde{x'},z-\tilde{z})g(\widetilde{x'},\tilde{z})d\widetilde{x'}d\tilde{z}\,.
\end{equation*}
Horizontally band-limited function:
\newline
A function $g=g(x',z,t)$ is called {\it horizontally  band-limited} with bandwidth $R$ if it satisfies 
the {\it bandedness assumption }
\begin{equation}\label{BANDLIM2}
 \F g(k',z,t)=0 \mbox{ unless } 1\leq R|k'|\leq 4 \mbox{ where } R<R_0.
\end{equation}
Interpolation norm:
$$||f||_{(0,1)}=||f||_{R;(0,1)}=\inf_{f=f_1+f_2}\left\{\sup_{z\in(0,1)}\langle|f_1|\rangle+\int_{(0,1)}\langle|f_2|\rangle\frac{dz}{z(1-z)}\right\}\,,$$
$$||f||_{(0,\infty)}=||f||_{R;(0,\infty)}=\inf_{f=f_1+f_2}\left\{\sup_{z\in(0,\infty)}\langle|f_1|\rangle+\int_{(0,\infty)}\langle|f_2|\rangle\frac{dz}{z}\right\}\,,$$
$$||f||_{(-\infty,1)}=||f||_{R;(-\infty,1)}=\inf_{f=f_1+f_2}\left\{\sup_{z\in(-\infty,1)}\langle|f_1|\rangle+\int_{(-\infty,1)}\langle|f_2|\rangle\frac{dz}{1-z}\right\}\,.$$
where $f_0, f_1$ satisfy the bandedness assumption (\ref{BANDLIM2}).
\newline
Horizontal Fourier transform:

$$\mathcal{F'}f(k',z,t)=\int e^{-ik'\cdot x'}f(x',z,t)dx'\,.$$
where $k'$ is the dual variable of $x'$.
\vspace{0.3cm}\newline
Throughout the paper we will denote with $\lesssim$ the inequality up to universal constants.

\section{Proof of Theorem \ref{th2}}

Without loss of generality we will assume $u_0=0$ since we have already seen that the contribution of $u_{IV}$
to the Nusselt number is zero (see (\ref{N67})).

Let us fix a smooth cut-off function $\psi$ in Fourier space satisfying
$$\psi (k')=\begin{cases} 1& 0\leq |k'|\leq \frac{7}{2}\,, \\ 0 & \qquad |k'|\geq 4 \,.\end{cases}$$
Consider the function $\zeta (k')=\psi(k')-\psi(\frac{7}{2}k')$ and define $\zeta_{j}(k')=\zeta (2^{-j}k')$.
Notice that $\zeta_{j}$ is supported in $(2^{j}, 2^{j+2})$.
\newline
Following a Littlewood-Paley-type decomposition  we construct
three operators $\mathbb{P_{<}}$, $\mathbb{P}_j$ and $\mathbb{P_>},$ which act at the
level of Fourier space by multiplication by cut off functions that localize to 
small, intermediate and large wavelengths respectively:
$$ \F \mathbb{P}_{<}f=\zeta_{<}\F f,$$
$$ \F \mathbb{P}_jf=\zeta_{j}\F f,$$
$$ \F \mathbb{P}_{>}f=\zeta_{>}\F f,$$
where $\zeta_{<}=\sum_{j<j_1}\zeta_{j}$ and $\zeta_{>}=\sum_{j>j_2}\zeta_{j} $ with $j_1<j_2$ to be determined. 
We notice that the operator $\mathbb{P}_j:L^p\rightarrow L^p$ for $1\leq p<\infty$ is bounded.
   \newline
   Inserting the decomposition into the bound for the Nusselt number (\ref{N3bis}) we get
   \begin{equation}\label{NUU}
   \begin{array}{rclc}
    \Nu&\leq&\frac{1}{\delta}\int_0^{\delta}\langle Tu^z\rangle dz+\frac{1}{\delta}\\
      &=&\frac{1}{\delta}\int_0^{\delta}\langle T\mathbb{P_{<}}u^z\rangle dz+\sum_{j=j_1}^{j_2}\frac{1}{\delta}\int_0^{\delta}\langle T\mathbb{P}_ju^z\rangle dz+\frac{1}{\delta}\int_0^{\delta}\langle T\mathbb{P_>}u^z\rangle+\frac{1}{\delta}\,.
      \end{array}    
      \end{equation}  
      At first, let us focus on the second term in (\ref{NUU}) arising
       from the intermediate wavelengths. In order to bound this 
        term we will need the maximal regularity estimate stated 
         in Proposition \ref{prop3}.
          For this purpose, rewrite the Navier-Stokes equations in (\ref{NSE}) as non-stationary Stokes equations with the
           nonlinear term and the buoyancy term in the right hand side
           \begin{equation}\label{PSE}
         \left\{\begin{array}{rclc} 
         \frac{1}{\Pr}\partial_t \mathbb{P}_ju-\Delta \mathbb{P}_ju+\nabla \mathbb{P}_jp&=&\Ra \mathbb{P}_jT e_z- \frac{1}{\Pr}\mathbb{P}_j(u\cdot \nabla) u  \qquad & {\rm for } \quad 0<z<1 \, ,\\
         \nabla\cdot \mathbb{P}_ju&=&0  \qquad & {\rm for } \quad 0<z<1 \,,\\
         \mathbb{P}_ju&=&0  \qquad & {\rm for } \quad z\in\{0,1\}\,,\\
         \mathbb{P}_ju&=&0  \qquad & {\rm for } \quad t=0\,.\\
         \end{array}\right.
         \end{equation}
         Observe that the application of the operator ``horizontal filtering``, namely $\mathbb{P}_j$, preserves the no-slip boundary condition
         at $z=0,1$ and it commutes with $\partial_z $ and all the differential operators that act in the vertical direction.                         
         Using the maximum  principle for the temperature (\ref{Max-Princ}), the Poincar\'e inequality in the $z-$variable and considering a generic decomposition
         of $\partial_z^2 \mathbb{P}_ju^z=h_0+h_1$ we have 
         \begin{eqnarray*}
          \frac{1}{\delta}\int_0^{\delta}\langle T\mathbb{P}_ju^z\rangle dz &\leq& \frac{1}{\delta}\int_0^{\delta}\langle|\mathbb{P}_ju^z|\rangle dz\\
           &\leq&\delta \int_0^{\delta}\langle|\partial_z^2 \mathbb{P}_ju^z|\rangle dz \\
           &\leq&\delta \left( \int_0^{\delta}\langle|h_0|\rangle dz+\int_0^{\delta}\langle|h_1|\rangle dz \right)\\
           &\leq& \delta^2\left(\sup_{0<z<1}\langle|h_0|\rangle+\int_0^1\langle|h_1|\rangle\frac{dz}{z(1-z)}\right)\,.
           \end{eqnarray*}
           Passing to the infimum over all the possible decompositions 
           of  $\partial_z^2 \mathbb{P}_ju^z$ we get 
           \begin{eqnarray*}
           \frac{1}{\delta}\int_0^{\delta}\langle T\mathbb{P}_ju^z\rangle dz \leq\delta^2 ||\partial_z^2 \mathbb{P}_ju^z||_{(0,1)}\,.
           \end{eqnarray*}
            We notice that $\mathbb{P}_ju$ satisfies the linear Stokes equations (\ref{PSE}) and 
            for $j>j_1$ it satisfies the bandedness assumption provided 
            \begin{equation}\label{J1}
             j_1\gtrsim \log_2 R_0^{-1}\,.      
            \end{equation}
             Therefore by the maximal regularity estimate (\ref{MRE}) applied to $\mathbb{P}_ju$  we have 
           \begin{eqnarray*}
           ||\partial_z^2 \mathbb{P}_ju^z||_{(0,1)} \leq \sup_{0<z<1}\langle|\Ra \mathbb{P}_jTe_z|\rangle+\int_0^1\langle|\frac{1}{\Pr}\mathbb{P}_j(u\cdot \nabla) u |\rangle\frac{dz}{z(1-z)}\,.
           \end{eqnarray*}
            Applying again the maximum principle (\ref{Max-Princ}) to the first term of the right 
            hand side we find
            $$\sup_{0<z<1}\langle|\Ra \mathbb{P}_jTe_z|\rangle\leq \Ra\,.$$
            To estimate the nonlinear  part we apply the Cauchy-Schwarz inequality and  Hardy's 
            inequality 
            \begin{eqnarray*}
            &&\int_0^1\langle|\frac{1}{\Pr}\mathbb{P}_j(u\cdot \nabla) u |\rangle\frac{dz}{z(1-z)}\\
            &\leq& \frac{1}{\Pr}\int_0^1\langle|\mathbb{P}_j(u\cdot \nabla) u |\rangle\frac{dz}{z}+\frac{1}{\Pr}\int_0^1\langle|\mathbb{P}_j(u\cdot \nabla) u |\rangle\frac{dz}{1-z}\\
            &\lesssim&\frac{1}{\Pr}\left(\int_0^1\frac{1}{z^2}\langle|\mathbb{P}_ju|^2\rangle dz  + \int_0^1\frac{1}{(1-z)^2}\langle|\mathbb{P}_ju|^2\rangle dz\right)^{\frac{1}{2}} \left(\int_0^1\langle| \nabla \mathbb{P}_ju|^2\rangle dz\right)^{\frac{1}{2}}\\
            &\lesssim&\frac{1}{\Pr}\left(\int_0^1\langle|\partial_z\mathbb{P}_ju|^2\rangle dz\right)^{\frac{1}{2}}\left(\int_0^1\langle| \nabla \mathbb{P}_ju|^2\rangle dz\right)^{\frac{1}{2}}\\
            &\lesssim&\frac{1}{\Pr}\int_0^1\langle|\nabla u|^2\rangle dz\\
            &\stackrel{(\ref{N2})}{\leq}&\frac{1}{\Pr}\Ra(\Nu-1)\,.
            \end{eqnarray*}                                                    
            Summing up over all the intermediate wavelengths we obtain  
            \begin{equation}\label{IWL}
                        \sum_{j=j_1}^{j_2}\frac{1}{\delta}\int_0^{\delta}\langle T\mathbb{P}_ju^z\rangle dz\lesssim (j_2-j_1)\delta^2 \left(\Ra +\frac{1}{\Pr}\Ra(\Nu-1)\right)\,.
            \end{equation}
            We now turn to the first term appearing on the right hand side of 
            (\ref{NUU}), contribution of the small wavelengths. By using the Cauchy-Schwarz inequality,
            the divergence-free condition, the horizontal bandedness assumption in form of (\ref{BAND2}) and the Poincar\'e inequality in the $z-$variable, 
            we obtain
            \begin{align}
            \frac{1}{\delta}\int_0^{\delta}\langle  T\mathbb{P}_< u^z \rangle dz 
             =&\;\frac{1}{\delta}\int_0^{\delta}\langle (T-1)\mathbb{P}_< u^z \rangle dz\numberthis\label{zero-av}\\ 
             \lesssim& \;\frac{1}{\delta} \left(\int_0^{\delta}\langle| T-1|^2\rangle dz\right)^{\frac{1}{2}}\left(\int_0^{\delta}\langle|\mathbb{P}_{<}u^z|^2\rangle dz\right)^{\frac{1}{2}}\notag\\
             \lesssim&\;\delta\frac{1}{\delta} \left(\int_0^{\delta}\langle| \partial_z T|^2\rangle dz\right)^{\frac{1}{2}}\delta\left(\int_0^{\delta}\langle|\partial_z\mathbb{P}_{<}u^z|^2\rangle dz\right)^{\frac{1}{2}}\notag\\
             \lesssim&\;\delta  \left(\int_0^{\delta}\langle|  \nabla T|^2\rangle dz\right)^{\frac{1}{2}}\left(\int_0^{\delta}\langle|\mathbb{P}_{<}\nabla'\cdot u'|^2\rangle dz\right)^{\frac{1}{2}}\notag\\
             \lesssim&\;\delta  \left(\int_0^{\delta}\langle| \nabla  T|^2\rangle dz\right)^{\frac{1}{2}}2^{j_1}\left(\int_0^{\delta}\langle|u'|^2\rangle dz\right)^{\frac{1}{2}}\notag\\
             \lesssim&\;\delta  \left(\int_0^{\delta}\langle| \nabla  T|^2\rangle dz\right)^{\frac{1}{2}}2^{j_1}\delta\left(\int_0^{\delta}\langle|\partial_z u'|^2\rangle dz\right)^{\frac{1}{2}}\notag\\
             \lesssim&\; 2^{j_1} \delta^2 \left(\int_0^{\delta}\langle| \nabla  T|^2\rangle dz\right)^{\frac{1}{2}}\left(\int_0^{\delta}\langle|\nabla' u|^2\rangle dz\right)^{\frac{1}{2}}\notag\\
             \lesssim& \;2^{j_1}\delta^2 \left(\int_0^{1}\langle| \nabla  T|^2\rangle dz\right)^{\frac{1}{2}}\left(\int_0^{1}\langle|\nabla u|^2\rangle dz\right)^{\frac{1}{2}}\notag\\
             \stackrel{(\ref{N1})\&(\ref{N2})}{\lesssim}&\; 2^{j_1}\delta^2 \Ra^{\frac{1}{2}}\Nu\,,\numberthis\label{SWL}  
             \end{align}
             where in (\ref{zero-av}) we used the fact that $\langle u^z \rangle=0$.
             Finally, we turn to the third term in (\ref{NUU}), which represents
             the contribution from the large wavelengths. In order to estimate
             this term we use the Cauchy-Schwarz inequality, the Poincar\'e inequality
             in the $z-$variable
             and the horizontal bandedness assumption in form of (\ref{BAND1}) applied to $T$
             \begin{align}
             \frac{1}{\delta}\int_0^{\delta}\langle \mathbb{P}_>  T u^z \rangle dz  \leq&\;\frac{1}{\delta} \left(\int_0^{\delta}\langle| \mathbb{P}_{>} T|^2\rangle dz\right)^{\frac{1}{2}}\left(\int_0^{\delta}\langle|u^z|^2\rangle dz\right)^{\frac{1}{2}}\notag\\
              \leq&\;\frac{1}{\delta} \frac{1}{2^{j_2}}\left(\int_0^{\delta}\langle | \nabla'T|^2 \rangle dz\right)^{\frac{1}{2}}\delta\left(\int_0^{\delta}\langle|\partial_zu^z|^2\rangle dz\right)^{\frac{1}{2}}\notag\\
              \leq& \;\frac{1}{2^{j_2}}\left(\int_0^{1}\langle |\nabla T|^2 \rangle dz\right)^{\frac{1}{2}}\left(\int_0^{\delta}\langle|\nabla u|^2\rangle dz\right)^{\frac{1}{2}}\notag\\
              \stackrel{(\ref{N1})\&(\ref{N2})}{\lesssim}& \;\frac{1}{2^{j_2}}\Ra^{\frac{1}{2}}\Nu\,.  \numberthis\label{LWL}
              \end{align}
               Putting the three estimates (\ref{IWL}),(\ref{SWL}) and (\ref{LWL}) together, we have the following bound on the
               Nusselt number
             
                $$ \Nu \lesssim (j_2-j_1)\delta^2\left(\frac{ \Nu}{\Pr}+1\right)\Ra+\left(\delta^2 2^{j_1}+\frac{1}{2^{j_2}}\right)\Ra^{\frac{1}{2}}\Nu+ \frac{1}{\delta}\,. $$                 
                In the last inequality  we impose  $2^{-j_2}=2^{j_1}\delta^2$. In turn, observe that $2^{-j_2}=2^{-\frac{(j_2-j_1)}{2}}\delta$ and therefore 
                
                \begin{equation}\label{Opt1}\Nu \lesssim  (j_2-j_1)\delta^2\left(\frac{\Nu}{\Pr}+1\right)\Ra+2^{-\frac{(j_2-j_1)}{2}}\delta \Ra^{\frac{1}{2}}\Nu+ \frac{1}{\delta}\,. \end{equation}              
                Observe that, on the one hand, we want the second term of the right hand side to be absorbed in the left hand side, therefore we impose
                $$ 1\approx 2^{-\frac{(j_2-j_1)}{2}}\delta \Ra^{\frac{1}{2}}$$
                and, on the other hand, we require all the terms in the right hand side to be of the same size
                \begin{equation}\label{DELTA-CHOICE}(j_2-j_1)\left(\frac{\Nu}{\Pr}+1\right)\Ra\approx \frac{1}{\delta^3}.\end{equation}
                From these two conditions we deduce 
               $$(j_2-j_1)2^{\frac{3}{2}(j_2-j_1)}\approx \Ra^{\frac{1}{2}}\left(\frac{\Nu}{\Pr}+1\right)^{-1},$$
               which is of the form 
               $x\log_{a}x\approx y$ with $x=a^{(j_2-j_1)}$ and $y=\Ra^{\frac{1}{2}}\left(\frac{\Nu}{\Pr}+1\right)^{-1}$ for $a>1$.
               This implies that, asymptotically, $x\approx \frac{y}{\log_a y}$ and therefore
               $$j_2-j_1\approx \log_a \left(\frac{\Ra^{\frac{1}{2}}\left(\frac{\Nu}{\Pr}+1\right)^{-1}}{\log_a(\Ra^{\frac{1}{2}}\left(\frac{\Nu}{\Pr}+1\right)^{-1})}\right)\approx \ln \Ra.$$
               Inserting this back into (\ref{DELTA-CHOICE}), we are led to the natural choice of $\delta$
               $$\delta=\left(\left(\frac{\Nu}{\Pr}+1\right)\Ra\ln \Ra\right)^{-\frac{1}{3}},$$               
%
                which give us the bound 
               $$\Nu\lesssim  \left(\left(\frac{\Nu}{\Pr}+1\right) \Ra\ln\Ra\right)^{\frac{1}{3}}\,.$$
               Applying the triangle inequality \footnote{Note that for $0<p<1$ we have
               $$||f+g||_{p}\leq 2^{\frac{1}{p}-1}(||f||_{p}+||g||_{p})$$ }
               $$\Nu\lesssim   \left(\Ra\ln\Ra\right)^{\frac{1}{3}}+\left(\left(\frac{\Nu}{\Pr}\right) \Ra\ln\Ra\right)^{\frac{1}{3}}\,$$
               and Young's inequality, we finally obtain
               $$\Nu\lesssim (\Ra\ln \Ra)^{\frac{1}{3}}+\left(\frac{\Ra\ln \Ra}{\Pr}\right)^{\frac{1}{2}}\,.$$
               In conclusion we get the following bound on the Nusselt number
               $$\Nu \lesssim \begin{cases}
               (\Ra\ln \Ra)^{\frac{1}{3}} & \mbox{ for } \Pr\geq (\Ra\ln \Ra )^{\frac{1}{3}}\,,\\
               \left( \frac{\Ra}{\Pr}\ln \Ra\right)^{\frac{1}{2}} & \mbox{ for } \Pr\leq (\Ra\ln\Ra )^{\frac{1}{3}}\,.
                \end{cases}$$

\section{Maximal regularity in the strip}

\subsection{From the strip to the half space}
Let us consider the non-stationary Stokes equations
\begin{equation*}
  \left\{\begin{array}{rclc}
     \partial_t u-\Delta u+\nabla p &=& f \qquad & {\rm for } \quad 0<z<1 \,,\\
        \nabla\cdot u &=& 0 \qquad & {\rm for }  \quad 0<z<1\,,\\
         u &=& 0  \qquad & {\rm for } \quad z\in\{0,1\} \,,\\
         u &=& 0  \qquad  & {\rm for } \quad t=0 \,.\\
         \end{array}\right.        
 \end{equation*}
In order to prove the maximal regularity 
estimate in the strip we extend the problem 
(\ref{STOKES-STRIP}) in the half space.
By symmetry, it is enough to consider for the moment
the extension to the upper half space.
\newline
Consider the localization $(\tilde u, \tilde p):=(\eta u,\eta p)$ where
 
 \begin{equation}\label{cutoff} \eta(z) \mbox{ is a cut-off function for } [0,\frac{1}{2}) \mbox{ in } [0,1) \,.\end{equation}
 Extending $(\tilde u, \tilde p)$ by zero they can be viewed as functions in the upper half space.
The couple  $(\tilde u, \tilde p)$ satisfies 

 \begin{equation}\label{UHS}
  \left\{\begin{array}{rclc}
      \partial_t \tilde u-\Delta \tilde u+\nabla \tilde p &=& \tilde f \qquad & {\rm for } \quad z>0\,,\\
        \nabla\cdot \tilde u &=& \tilde\rho \qquad & {\rm for } \quad z>0 \,,\\
        \tilde u &=&  0 \qquad & {\rm for } \quad z=0\,,\\
         \tilde u &=& 0  \qquad  & {\rm for } \quad t=0 \,,\\
         \end{array}\right.        
 \end{equation}
where 
\begin{equation}\label{Defi}
\tilde f:=\eta f-2(\partial_z \eta)\partial_z u-(\partial_z^2\eta )u+(\partial_z\eta )pe_z, \qquad \qquad  \tilde\rho:=(\partial_z\eta )u^z\,.
\end{equation}
%

\subsection{Maximal regularity in the upper half space}
In the half space, taking 
advantages from the explicit representation of the solution 
via Green functions, we prove 
the regularity estimates which will be crucial in the proof of
Theorem \ref{th1}.
\begin{proposition}[Maximal regularity in the upper half space]\label{pr1}\ \\
Consider the non-stationary Stokes equations in the upper half-space 
 \begin{equation}\label{STOKES-HALF}
  \left\{\begin{array}{rclc}
      \partial_t u-\Delta u+\nabla p &=& f \qquad & {\rm for } \quad z>0\,,\\
         \nabla\cdot u &=& \rho \qquad & {\rm for } \quad z>0 \,,\\
         u &=&  0 \qquad & {\rm for } \quad z=0\,,\\
         u &=&  0 \qquad & {\rm for } \quad t=0\,.\\
         \end{array}\right.        
 \end{equation}
 Suppose that $f$ and $\rho$ are horizontally band-limited , i.e
\begin{equation}\label{BC1}\F f(k',z,t)=0 \mbox{ unless } 1\leq R|k'|\leq 4  \mbox{ where } R\in(0,\infty)\,,\end{equation}
and 
\begin{equation}\label{BC2}\F \rho(k',z,t)=0 \mbox{ unless } 1\leq R|k'|\leq 4  \mbox{ where } R\in(0,\infty)\,.\end{equation}
Then
\begin{eqnarray*}
 &&||\partial_t u^z||_{(0,\infty)}+||\nabla^2 u^z||_{(0,\infty)}+||\nabla p||_{(0,\infty)}+||(\partial_t -\partial_z^2)u'||_{(0,\infty)}+||\nabla'\nabla u'||_{(0,\infty)}\\
 &\lesssim&||f||_{(0,\infty)}+||(-\Delta')^{-\frac{1}{2}}\partial_t \rho||_{(0,\infty)}+||(-\Delta')^{-\frac{1}{2}}\partial_z^2 \rho ||_{(0,\infty)}+||\nabla \rho||_{(0,\infty)},
\end{eqnarray*}
where $||\cdot||_{(0,\infty)}$ denotes the norm
\begin{equation}\label{NORM-HALF}||f||_{(0,\infty)}:=||f||_{R;(0,\infty)}\inf_{f=f_0+f_1}\left(\sup_{0<z<\infty}\langle |f_0|\rangle+\int_0^{\infty}\langle |f_1|\rangle\frac{dz}{z}\right)\,,\end{equation}
where $f_0$ and $f_1$ satisfy the bandedness assumption (\ref{BC1}).
\end{proposition}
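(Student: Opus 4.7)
My approach proceeds in two stages: first reduce to a divergence-free problem by absorbing $\rho$ into the forcing, then solve the reduced problem via explicit half-space kernels and bound the resulting convolutions in the interpolation norm using the horizontal band-limitation. Because $\rho$ is horizontally band-limited on $|k'| \sim 1/R$, the horizontal multiplier $(-\Delta')^{-1/2}$ is bounded on this band, so I can produce in closed form a divergence carrier $w$ with $\nabla\cdot w = \rho$ and $w|_{z=0} = w|_{t=0} = 0$; for instance a suitable combination of $-\nabla'(-\Delta')^{-1}\rho$ with a vertical correction enforcing the boundary and initial data. Setting $v := u - w$ yields a divergence-free Stokes system forced by $\tilde f := f - (\partial_t - \Delta)w$, and the norm $||\tilde f||_{(0,\infty)}$ is controlled by the right-hand side of the claimed inequality --- the three $\rho$-terms $||(-\Delta')^{-1/2}\partial_t\rho||_{(0,\infty)}$, $||(-\Delta')^{-1/2}\partial_z^2\rho||_{(0,\infty)}$, $||\nabla\rho||_{(0,\infty)}$ arise precisely from estimating $\partial_t w$ and $\Delta w$ on the spectral band $|k'|\sim 1/R$. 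Hence it suffices to treat $\rho = 0$.

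For the reduced system I exploit the classical Solonnikov--Ukai scalar reformulation: taking the divergence of the momentum equation gives $\Delta q = \nabla\cdot\tilde f$, while applying $\Delta$ to the $z$-component yields
$$
(\partial_t - \Delta)\Delta v^z \;=\; \Delta'\tilde f^z - \partial_z(\nabla'\cdot\tilde f'),
$$
together with the overdetermined boundary data $v^z|_{z=0} = \partial_z v^z|_{z=0} = 0$ (inherited from incompressibility and no-slip). This produces $v^z$ as an explicit convolution built from the heat and Poisson semigroups and horizontal Riesz transforms; $q$ then follows from a half-space Neumann problem for $\Delta q$, and $v'$ from the scalar Dirichlet heat equation $(\partial_t - \Delta) v' = \tilde f' - \nabla' q$ with $v'|_{z=0}=0$. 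For each output quantity $D \in \{\partial_t v^z,\, \nabla^2 v^z,\, \nabla q,\, (\partial_t - \partial_z^2) v',\, \nabla'\nabla v'\}$ this representation gives $Dv = K\ast\tilde f$ with an explicit kernel $K$. I then decompose $\tilde f = \tilde f_0 + \tilde f_1$ according to the infimum in the interpolation norm and estimate $K\ast\tilde f_0$ in $L^\infty_z(L^1_{t,x'})$ and $K\ast\tilde f_1$ in $L^1_z\bigl(\frac{dz}{z};L^1_{t,x'}\bigr)$: on the band $|k'|\sim 1/R$ every horizontal Fourier multiplier in $K$ is bounded on $L^1_{x'}$, and the normal-direction factor is dominated by an exponentially decaying kernel on scale $R$, so Young's inequality in the normal variable closes each of the four estimates.

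The genuine difficulty is that the target norm is doubly borderline for Calder\'on--Zygmund theory: the exponent is the endpoint $p=1$ and the weight $\frac{1}{z}$ just fails to lie in the Muckenhoupt $A_1$ class, so unrestricted maximal regularity must fail logarithmically --- this obstruction is exactly what the band-limitation $1 \le R|k'| \le 4$ is designed to bypass, by truncating the horizontal singular kernels at scale $R$. I anticipate that the most delicate verification is the estimate of $\nabla q$ and $\nabla^2 v^z$ in the weighted $L^1_z(\frac{dz}{z})$-piece: both are recovered non-locally in $z$ from a half-space boundary-value problem, so their vertical decay couples with the horizontal band in a subtle way, and controlling this coupling in both halves of the interpolation norm simultaneously is where the argument will require the most care.
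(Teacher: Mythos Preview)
Your plan is correct in spirit and very close to the paper's approach --- both hinge on the Ukai-style reduction of the half-space Stokes problem to compositions of heat and Poisson semigroups, with the band-limitation $|k'|\sim 1/R$ supplying the missing integrability at the $L^1$ and $A_1$ endpoints. The paper organizes this differently in two respects. First, rather than eliminating $\rho$ up front via a divergence carrier, it threads $\rho$ through the entire decomposition; the three $\rho$-terms on the right-hand side then emerge naturally from the individual steps rather than from estimating $(\partial_t-\Delta)w$. Second, and more significantly, instead of working with the fourth-order scalar equation $(\partial_t-\Delta)\Delta u^z=\Delta' f^z-\partial_z\nabla'\cdot f'$ directly, the paper factors the normal Laplacian as $\Delta=(\partial_z+(-\Delta')^{1/2})(\partial_z-(-\Delta')^{1/2})$ and splits the solution operator into a chain of four elementary boundary-value problems --- a backward fractional diffusion, a Dirichlet heat equation, a forward fractional diffusion, and a second Dirichlet heat equation for $u'$ --- each of which admits an explicit Poisson- or heat-kernel representation and is estimated in its own lemma. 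This modular factorization is what lets the paper isolate the delicate weighted-$L^1(dz/z)$ estimate you flag: it reduces to controlling the Poisson semigroup and the one-dimensional heat kernel separately against the weight, rather than a composite kernel. Your monolithic approach would work too, but the factorization buys cleaner bookkeeping and makes the role of each semigroup transparent.
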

The first ingredient to establish Proposition \ref{pr1} is a suitable representation 
of the solution operator $(f=(f',f^z),\rho)\rightarrow u=(u',u^z) $
of the Stokes equations with the no-slip  boundary condition.
In the case of no-slip boundary condition the Laplace operator  
has to be factorized as
$\Delta=\partial_z^2+\Delta'=(\partial_z+(-\Delta')^{\frac{1}{2}})(\partial_z-(-\Delta')^{\frac{1}{2}})$.
In this way the solution operator  to the Stokes  equations with the no-slip boundary
 condition (\ref{STOKES-HALF}) can be written as the fourfold composition of solution operators 
to three more elementary boundary value problems: 
\begin{itemize}

\item Backward fractional diffusion equation (\ref{FraBack}):

\begin{equation}\label{FraBack}
  \left\{\begin{array}{rclc}
     (\partial_z-(-\Delta')^{\frac{1}{2}})\phi &=& \nabla\cdot f-(\partial_t-\Delta )\rho \qquad & {\rm for } \quad  z>0 \,,\\
        \phi &\rightarrow& 0  \qquad & {\rm for } \quad z\rightarrow \infty.\\
         \end{array}\right.        
 \end{equation}

\item  Heat equation (\ref{Heat1}):
\begin{equation}\label{Heat1}
  \left\{\begin{array}{rclc}
  (\partial_t-\Delta)v^z&=&(-\Delta')^{\frac{1}{2}}(f^z-\phi)-\nabla'\cdot f'+(\partial_t-\Delta)\rho \qquad & {\rm for } \quad  z>0,\\
  v^z&=& 0  \qquad & {\rm for } \quad  z=0 \,,\\
  v^z&=& 0  \qquad & {\rm for } \quad  t=0 \,.\\
  \end{array}\right. 
\end{equation}

\item  Forward fractional diffusion equation (\ref{FraFor}): 

\begin{equation}\label{FraFor}
  \left\{\begin{array}{rclc}
  (\partial_z+(-\Delta')^{\frac{1}{2}})u^z&=& v^z  \qquad & {\rm for } \quad   z>0\,,\\
  u^z&=&0   \qquad & {\rm for } \quad  z=0\,.\\
  \end{array}\right. 
\end{equation}

\item Heat equation (\ref{Heat2}):

\begin{equation}\label{Heat2}
 \left\{\begin{array}{rclc}
(\partial_t-\Delta)v'&=&(1+\nabla'(-\Delta')^{-1}\nabla'\cdot)f'   \qquad & {\rm for } \quad   z>0\,,\\
v'&=& 0   \qquad & {\rm for } \quad z=0\,,\\
v'&=& 0   \qquad & {\rm for } \quad  t=0\,.\\
  \end{array}\right.
\end{equation}
Finally set
\begin{equation}
\label{horvel}u'=v'-\nabla'(-\Delta')^{-1}(\rho-\partial_z u^z)\,.
\end{equation}

\end{itemize}
In order to prove the validity of the decomposition we need to argue that 
$$(\partial_t-\Delta)u-f \mbox{ is irrotational }\,,$$ which 
reduces to prove that
 \begin{equation*}
  (\partial_t-\Delta)u'-f' \mbox{ is irrotational in } x'
 \end{equation*}
 and
 \begin{equation}\label{SEC}
  \partial_z ((\partial_t-\Delta)u'-f')=\nabla'((\partial_t-\Delta)u^z-f^z)\,.
 \end{equation} 
 Let us consider for simplicity $\rho=0.$
The first statement follows easily from the definition. Indeed by definition (\ref{horvel}) and  equation (\ref{Heat2}), 
 $$ (\partial_t-\Delta)u'-f' =\nabla' ((-\Delta')^{-1}\nabla'\cdot f'+(-\Delta')^{-1}\partial_z u^z).$$
Let us now focus on (\ref{SEC}), which by using (\ref{horvel}) and (\ref{Heat2}) can be rewritten as
  $$\partial_z \nabla'((-\Delta')^{-1}\nabla'\cdot f'+(-\Delta')^{-1}(\partial_t-\Delta)\partial_zu^z)=\nabla'((\partial_t-\Delta)u^z-f^z)\,.$$
Because of the periodic boundary conditions in the horizontal direction, the latter is equivalent to
$$\partial_z (-\Delta')((-\Delta')^{-1}\nabla'\cdot f'+(-\Delta')^{-1}(\partial_t-\Delta)\partial_zu^z)=(-\Delta')((\partial_t-\Delta)u^z-f^z),$$
that, after factorizing  $\Delta=(\partial_z-(-\Delta')^{\frac{1}{2}})(\partial_z+(-\Delta')^{\frac{1}{2}})$, turns into
$$(\partial_z-(-\Delta')^{\frac{1}{2}})(\partial_t-\Delta)(\partial_z+(-\Delta')^{\frac{1}{2}}) u^z=(-\Delta')f^z-\partial_z\nabla'\cdot f'\,.$$
One can easily check that the identity holds true by applying (\ref{FraFor}), (\ref{Heat1}) and (\ref{FraBack}).
The no-slip boundary condition is trivially satisfied, indeed by (\ref{FraFor}) we have $u^z=0$ and $\partial_z u^z=0$. 
The combination of (\ref{horvel}) with $\partial_z u^z=0$ gives $u'=0$.
%
%
%
%
%
%

For each step of the decomposition of the Navier Stokes equations 
we will derive maximal regularity-type estimates. These are summed 
up in the following

\begin{proposition}\label{prop3}\ \\ 
 \begin{enumerate} 
  \item Let $\phi,f,\rho$ satisfy the problem (\ref{FraBack}) and assume
        $f,\rho$ are  horizontally band-limited, i.e 
        $$\F f(k',z,t)=0 \mbox{ unless } 1\leq R|k'|\leq 4$$
        and 
        $$\F \rho(k',z,t)=0 \mbox{ unless } 1\leq R|k'|\leq 4.$$
        
        Then, 
        
        \begin{equation*}
          ||\phi||_{(0,\infty)}\lesssim ||f||_{(0,\infty)}+||(-\Delta')^{-\frac{1}{2}}\partial_t \rho||_{(0,\infty)}+||\nabla \rho||_{(0,\infty)} \,.\label{A}
        \end{equation*}
        
  \item  Let $v^z, f, \phi, \rho$ satisfy the problem (\ref{Heat1}) and assume
          $f,\phi,\rho$ are  horizontally band-limited, i.e 
          $$\F f(k',z,t)=0 \mbox{ unless } 1\leq R|k'|\leq 4\,,$$         
          $$\F \phi(k',z,t)=0 \mbox{ unless } 1\leq R|k'|\leq 4\,$$
          and
          $$\F \rho(k',z,t)=0 \mbox{ unless } 1\leq R|k'|\leq 4\,.$$
          
          Then,
         \begin{equation}\label{B}
         \begin{array}{rclc}
          &&||\nabla v^z||_{(0,\infty)}+||(-\Delta)^{-\frac{1}{2}}(\partial_t-\partial_z^2)v^z||_{(0,\infty)}\\
          &\lesssim& ||f||_{(0,\infty)}+||\phi||_{(0,\infty)}+||(-\Delta')^{-\frac{1}{2}}\partial_t\rho||_{(0,\infty)} \\
          &+&||(-\Delta)^{-\frac{1}{2}}\partial_z^2\rho||_{(0,\infty)}+||\nabla\rho||_{(0,\infty)}\,.       
         \end{array}  
         \end{equation}       
                  
   \item Let $u^z, v^z$ satisfy the problem (\ref{FraFor}) and assume 
         $v^z$ is  horizontally band-limited, i.e 
          $$\F v^z(k',z,t)=0 \mbox{ unless } 1\leq R|k'|\leq 4\,.$$
          Then,   
         \begin{equation}\label{C}
           \begin{array}{rclc}
           &&||\partial_t u^z||_{(0,\infty)}+||\nabla^2u^z||_{(0,\infty)}+||(-\Delta')^{-\frac{1}{2}}\partial_z(\partial_t-\partial_z^2)u^z||_{(0,\infty)}\\
          &\lesssim & ||\nabla v^z||_{(0,\infty)}+||(-\Delta')^{-\frac{1}{2}}(\partial_t-\partial_z^2)v^z||_{(0,\infty)}\,.\\    
           \end{array}              
          \end{equation}
   \item Let $v',f'$, satisfy the problem (\ref{Heat2}) and assume 
          $f'$ is horizontally band-limited, i.e 
          $$\F f(k',z,t)=0 \mbox{ unless } 1\leq R|k'|\leq 4\,.$$          
          Then,
          \begin{equation}\label{D}
           ||\nabla'\nabla v'||_{(0,\infty)}+||(\partial_t-\partial_z^2)v'||_{(0,\infty)}\lesssim ||f'||_{(0,\infty)}\,. 
          \end{equation}
%
 \end{enumerate}

\end{proposition}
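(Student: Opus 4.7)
The plan is to prove each of the four estimates by combining an explicit representation of the corresponding solution operator with the horizontal bandedness assumption. Bandedness localizes every relevant Fourier multiplier to a dyadic annulus $|k'|\sim R^{-1}$, which (i) turns $(-\Delta')^{\pm 1/2}$ into bounded Fourier multipliers of order $R^{\mp 1}$, (ii) makes the fractional semigroup kernels $e^{\pm(s-z)(-\Delta')^{1/2}}$ into genuine exponential convolution kernels in $z$ with decay scale $R$, and (iii) rescues the borderline Calder\'on--Zygmund estimates one needs for $L^1$-based norms. Since the norm $\|\cdot\|_{(0,\infty)}$ is defined as an infimum over decompositions $F=F_0+F_1$, it suffices in each step to prove two endpoint bounds separately: from the $\sup_z\langle|F_0|\rangle$ endpoint, and from the $\int_0^\infty\langle|F_1|\rangle\,dz/z$ endpoint, each into the full interpolation norm on the output side.

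For Parts 1 and 3, I would work with the first-order solution formulas
\begin{equation*}
\phi(z)=-\int_z^\infty e^{-(s-z)(-\Delta')^{1/2}}G(s)\,ds,\qquad
u^z(z)=\int_0^z e^{-(z-s)(-\Delta')^{1/2}}v^z(s)\,ds,
\end{equation*}
where $G=\nabla\cdot f-(\partial_t-\Delta)\rho$ in Part~1, rewritten as $(-\Delta')^{1/2}[(-\Delta')^{-1/2}\partial_t\rho]-\partial_z^2\rho-\Delta'\rho+\nabla\cdot f$ so that each term matches exactly one of the quantities appearing on the right-hand side of the claimed inequality. The needed kernel bounds then reduce to Schur-type estimates, namely $\int e^{-|z-s|/R}\,ds/s\lesssim 1$ and $\int e^{-|z-s|/R}\,dz/z\lesssim 1$, uniformly in the remaining variable, with the factors $R^{\pm 1}$ produced by powers of $(-\Delta')^{\pm 1/2}$ cancelling the corresponding factors of $R$ inside the exponential. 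For Part~3, differentiating the forward exponential once costs $(-\Delta')^{1/2}$ and twice gives $-\Delta'$, both bounded on band-limited data, which is exactly how one recovers the second vertical derivative $\partial_z^2 u^z$. For Parts 2 and 4, I would use Duhamel for the half-line heat equation with Dirichlet condition, reduced to the whole-space heat kernel by odd reflection; standard parabolic maximal regularity then succeeds in the interpolation norm under bandedness, and in Part~4 the horizontal Leray-type multiplier $1+\nabla'(-\Delta')^{-1}\nabla'\cdot$ is bounded on band-limited functions, so the effective source behaves like $f'$ itself.

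The main obstacle is the doubly borderline nature of the problem: both the exponent $L^1$ and the weight $1/z$ are endpoints of the Calder\'on--Zygmund and Muckenhoupt scales, so the off-the-shelf machinery is unavailable. Overcoming this amounts, in each of the four steps, to verifying Schur conditions for the exponential or heat kernels, together with their derivatives twisted by $(-\Delta')^{\pm 1/2}$, against the measures $ds/s$ and $dz/z$; the delicate point is tracking exactly how each power of $(-\Delta')^{1/2}$ is absorbed by the bandedness to produce uniformly integrable kernels, with constants independent of $R$. This is also the reason the estimate only holds under the horizontal spectral gap, and is why the argument must be run separately on each of the four subproblems rather than on the full Stokes solution operator at once. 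Once these four kernel bounds are in place, substituting the decomposition $F=F_0+F_1$ from the interpolation norm and applying the estimates termwise yields the four inequalities of Proposition~\ref{prop3}.
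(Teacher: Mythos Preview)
Your overall strategy---explicit solution formulas for the four auxiliary problems, combined with bandedness to tame the Fourier multipliers, and separate treatment of the two endpoints of the interpolation norm---matches the paper's approach. The paper packages the kernel estimates as three stand-alone lemmas (one for each of the backward fractional diffusion, forward fractional diffusion, and Dirichlet heat equation) and then deduces Proposition~\ref{prop3} from those by algebraic manipulation plus the bandedness equivalences $\|\nabla'(-\Delta')^{-1/2}\cdot\|\sim\|\cdot\|$ and $\|(-\Delta')^{1/2}\cdot\|\sim\|\nabla'\cdot\|$. So at the architectural level you are on the same track.

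There is, however, a genuine gap in your key technical claim. The Schur bounds you write, $\int e^{-|z-s|/R}\,ds/s\lesssim 1$ and $\int e^{-|z-s|/R}\,dz/z\lesssim 1$ uniformly in the remaining variable, are \emph{false}: take $z\to 0$ in the first one and the integral near $s=0$ diverges logarithmically. This is exactly the borderline nature of the weight $1/z$ that you flag as the obstacle, and it cannot be overcome by a single Schur/Young estimate. For the \emph{forward} fractional diffusion (Part~3) the difficulty dissolves because the integration runs over $s<z$, so $1/z\le 1/s$ lets you transfer the weight from the output to the input before applying Young; this is how the paper handles it. For the \emph{backward} problem (Part~1) the direction is reversed, $s>z$, and no such transfer is available. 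The paper's cure is a dyadic decomposition of the source in the $z$-variable: for $f$ supported in $\{2H\le z\le 4H\}$, the solution is estimated in the $\sup_z$ piece of the interpolation norm on $\{z\le H\}$ and in the $\int dz/z$ piece on $\{z\ge H\}$, and only then summed over dyadic $H$. This essential use of the interpolation structure on the \emph{output} side, not just on the input, is what rescues the borderline estimate and is missing from your outline.

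A secondary point for Part~3: when you estimate $(-\Delta')^{-1/2}\partial_z(\partial_t-\partial_z^2)u^z$, applying $(-\Delta')^{-1/2}(\partial_t-\partial_z^2)$ to the forward equation produces a \emph{nonzero} boundary value at $z=0$, namely $(-\Delta')^{-1/2}\partial_z v^z|_{z=0}$ (since $\partial_z^2 u^z|_{z=0}=\partial_z v^z|_{z=0}$). The paper handles this by proving the forward-diffusion lemma with inhomogeneous boundary data and controlling the constant extension of the boundary datum in the interpolation norm; your sketch does not mention this.
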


\subsection{Proof of Proposition \ref{pr1}}

By an easy application of Proposition \ref{prop3}, we will now prove the maximal regularity estimate on the upper  half space.

\begin{proof}[Proof of Proposition \ref{pr1}]\ \\
From Proposition \ref{prop3} we have the following bound for the  vertical component of the velocity $u$

          \begin{eqnarray*} 
           &&||\partial_t u^z||_{(0,\infty)}+||\nabla^2u^z||_{(0,\infty)}+||(-\Delta')^{-\frac{1}{2}}\partial_z(\partial_t-\partial_z^2)u^z||_{(0,\infty)}\\
          &\stackrel{(\ref{C})}{\lesssim} &||\nabla v^z||_{(0,\infty)}+||(-\Delta')^{-\frac{1}{2}}(\partial_t-\partial_z^2)v^z||_{(0,\infty)}\\
          &\stackrel{(\ref{B})}{\lesssim} &||f||_{(0,\infty)}+||\phi||_{(0,\infty)}+||(-\Delta')^{-\frac{1}{2}}\partial_t\rho||_{(0,\infty)}
          +||(-\Delta)^{-\frac{1}{2}}\partial_z^2\rho||_{(0,\infty)}+||\nabla\rho||_{(0,\infty)} \\
           &\stackrel{(\ref{A})}{\lesssim} & ||f||_{(0,\infty)}+||(-\Delta')^{\frac{1}{2}}\partial_t\rho||_{(0,\infty)}+||(-\Delta)^{-\frac{1}{2}}\partial_z^2\rho||_{(0,\infty)}+||\nabla\rho||_{(0,\infty)} \,.       
         \end{eqnarray*}
 Instead for the horizontal components of the velocity $u'$ we have 
        \begin{eqnarray*}
        &&||(\partial_t-\partial_z^2)u'||_{(0,\infty)}+||\nabla'\nabla u'||_{(0,\infty)}\\
        &\stackrel{(\ref{horvel})}{\lesssim}& ||(\partial_t-\partial_z^2)v'||_{(0,\infty)}+||\nabla'\nabla v'||_{(0,\infty)}\\
        &+&||(-\Delta')^{-\frac{1}{2}}(\partial_t-\partial_z^2)\rho||_{(0,\infty)}+||\nabla\rho||_{(0,\infty)}\\
        &+&||(-\Delta')^{-\frac{1}{2}}\partial_z(\partial_t-\partial_z^2)u^z||_{(0,\infty)}+||\partial_z\nabla u^z||_{(0,\infty)}\\
        &\stackrel{(\ref{B}),(\ref{C}),(\ref{D})}{\lesssim}& ||f||_{(0,\infty)}+||(-\Delta')^{-\frac{1}{2}}\partial_t\rho||_{(0,\infty)}+||(-\Delta)^{-\frac{1}{2}}\partial_z^2\rho||_{(0,\infty)}+||\nabla\rho||_{(0,\infty)}\,.        
      \end{eqnarray*}  
 Summing up we obtain
\begin{equation}\label{ESTI}
\begin{array}{rclc}
&&||\partial_t u^z||_{(0,\infty)}+||\nabla^2u^z||_{(0,\infty)}+||(\partial_t-\partial_z^2)u'||_{(0,\infty)}+||\nabla'\nabla u'||_{(0,\infty)}\\
&\lesssim&||f||_{(0,\infty)}+||(-\Delta')^{-\frac{1}{2}}\partial_t\rho||_{(0,\infty)}+||(-\Delta)^{-\frac{1}{2}}\partial_z^2\rho||_{(0,\infty)}+||\nabla\rho||_{(0,\infty)}\,.
\end{array}
\end{equation}
The bound for the $\nabla p$ follows by equations (\ref{STOKES-HALF}) and applying (\ref{ESTI}).
\end{proof}

\subsection{Proof of Proposition \ref{prop3} }
This section is devoted to the proof of Proposition \ref{prop3}, which rely on a series of Lemmas
(Lemma \ref{lemma1}, Lemma \ref{lemma2} and Lemma \ref{lemma3}) that we state here and prove in Section \ref{tec}.

The following Lemmas contain the basic maximal regularity 
estimates for the three auxiliary problems. These estimates,
together with the bandedness assumption in the form  of (\ref{P}), (\ref{Q})
and (\ref{R}) will be the main ingredients for the proof
of Proposition \ref{prop3}.

\begin{lemma}\label{lemma1}\ \\
  Let $u,f$ satisfy  the problem

 \begin{equation}\label{I}
 \left\{\begin{array}{rclc}
        (\partial_z-(-\Delta')^{\frac{1}{2}})u&=&f  \qquad & {\rm for } \quad z>0\,,\\
        u&\rightarrow&0   \qquad & {\rm for } \quad z\rightarrow\infty\,
      \end{array}\right.
  \end{equation}   
    and assume $f$ to be horizontally band-limited, i.e
   $$\F f(k',z,t)=0 \quad \mbox{ unless }\quad 1\leq R|k'|\leq 4\,.$$
  Then, 
   \begin{equation}\label{1}
    ||\nabla u||_{(0,\infty)}\lesssim||f||_{(0,\infty)}\,.
   \end{equation}
\end{lemma}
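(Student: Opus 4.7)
The plan is to solve (\ref{I}) explicitly by horizontal Fourier transform and then reduce the estimate to a scalar convolution bound on the half-line. Integrating the ODE $(\partial_z - |k'|)\hat u = \hat f$ with the decay condition at $z = +\infty$ yields
\[ \hat u(k',z,t) = -\int_z^\infty e^{|k'|(z-z')}\,\hat f(k',z',t)\, dz', \]
whence $\partial_z u = (-\Delta')^{1/2} u + f$ and $\widehat{\nabla' u} = ik'\hat u$. Since Riesz-transform-type Fourier multipliers are bounded on horizontally band-limited functions (their symbols being smooth and bounded on the annulus $|k'|\sim 1/R$), the estimate (\ref{1}) reduces to showing $\|Tf\|_{(0,\infty)}\lesssim \|f\|_{(0,\infty)}$, where
\[ \widehat{Tf}(k',z) = -\int_z^\infty |k'|\,e^{|k'|(z-z')}\, \hat f(k',z')\, dz'. \]
Because $\hat f$ is supported on $|k'|\sim 1/R$, Young's inequality applied to the horizontal convolution gives the pointwise scalar bound $\langle|Tf(z)|\rangle \leq \int_z^\infty \frac{1}{R} e^{-(z'-z)/R} \langle|f(z')|\rangle\, dz'$.

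Given any admissible decomposition $f = f_0+f_1$ with $f_0,f_1$ band-limited, by linearity $Tf = Tf_0 + Tf_1$. Since the scalar kernel integrates to $1$ in $z'$, we immediately obtain $\sup_z\langle|Tf_0|\rangle \leq \sup_{z'}\langle|f_0|\rangle$. The weighted-$L^1$ endpoint, however, fails naively: the scalar kernel operator is not bounded on $L^1(dz/z)$, as the weight $1/z$ is too singular at $z=0$ while the kernel only decays on scale $R$ (visible already on $f_1 = \chi_{[R,2R]}$). The rescue is to decompose the output independently of the input: since bandedness itself provides the intrinsic length scale $R$, we write
\[ Tf_1 = \chi_{\{z\leq R\}}\,Tf_1 + \chi_{\{z>R\}}\,Tf_1 =: A+B, \]
which preserves horizontal bandedness because it only localizes in $z$.

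The key technical estimate is then the pair
\[ \sup_{z\leq R}\langle |A(z)|\rangle + \int_R^\infty\langle |B(z)|\rangle\frac{dz}{z} \lesssim \int_0^\infty \frac{\langle |f_1(z')|\rangle}{z'}\, dz', \]
both consequences of the elementary inequality $\frac{1}{R}e^{-z'/R}\leq \frac{1}{z'}$ for $z'>0$ (equivalent to $(1+x)e^{-x}\leq 1$). For $A$, the restriction $z\leq R$ gives $e^{-(z'-z)/R}\leq e\cdot e^{-z'/R}$, and integration against $\langle|f_1(z')|\rangle$ delivers the claim pointwise. For $B$, swapping the order of integration produces the scalar kernel $\int_R^{z'} \frac{e^{-(z'-z)/R}}{Rz}\, dz$ (vanishing for $z'<R$); splitting the $z$-integration at $z'/2$ and using the exponential decay on the upper piece yields the bound $\lesssim 1/z'$ uniformly for $z'\geq R$. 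Hence $Tf = Tf_0 + A + B$ is an admissible $(0,\infty)$-decomposition of $Tf$ with cost $\lesssim \sup_z\langle|f_0|\rangle + \int\langle|f_1(z')|\rangle\, dz'/z'$, and passing to the infimum over decompositions of $f$ closes the argument. The expected main obstacle is exactly the failure of the weighted-$L^1$ endpoint, which forces one to decompose the output independently of any decomposition of the input, with the bandedness-induced scale $R$ naturally dictating the cut-off.
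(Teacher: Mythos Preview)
Your proof is correct, with one small imprecision: the scalar kernel bound should read $\langle|Tf(z)|\rangle \lesssim \int_z^\infty \frac{C}{R}\,e^{-c(z'-z)/R}\langle|f(z')|\rangle\,dz'$ for some constants $c,C>0$ rather than with constant exactly $1$, since the $L^1_{x'}$-norm of the physical-space kernel is controlled by the $C^N$-norm of the band-limited symbol $|k'|e^{-|k'|(z'-z)}\chi(R|k'|)$, which picks up a polynomial factor $(1+(z'-z)/R)^N$ before the exponential. This is harmless for the remainder of the argument (your elementary inequality becomes $\frac{c}{R}e^{-cz'/R}\le\frac{1}{z'}$, still true).

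Your route differs genuinely from the paper's. The paper uses the Poisson-kernel representation and, for the weighted-$L^1$ endpoint, a \emph{dyadic decomposition of the input}: writing $f=\sum_n \chi_{[2^n,2^{n+1}]}f$, it shows that for each piece the corresponding output $\nabla'u_H$ (with $H=2^{n-1}$) admits a sup-control on $\{z\le H\}$ and a weighted-$L^1$ control on $\{z>H\}$, then sums over $n$. You instead decompose the \emph{output} once, at the single band-limitation scale $R$, independently of where $f_1$ is supported in $z$. This is shorter and conceptually cleaner: the exponential decay of your scalar kernel on scale $R$ does in one stroke what the paper's dyadic sum achieves. The paper's version has the mild advantage that it rests only on the three elementary Poisson-kernel bounds $\langle|\nabla'u_{z_0}|\rangle'\lesssim \langle|\nabla'f|\rangle'$, $(z_0-z)^{-1}\langle|f|\rangle'$, $(z_0-z)^{-2}\langle|\nabla'(-\Delta')^{-1}f|\rangle'$, which are slightly more primitive and reusable elsewhere in the paper.
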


\begin{lemma}\label{lemma2}\ \\ 	
   Let $u,f,g=g(x',t)$ satisfy  the problem
    
  \begin{equation}\label{II}
 \left\{\begin{array}{rclc}
        (\partial_z+(-\Delta')^{\frac{1}{2}})u&=&f   \qquad & {\rm for } \quad z>0\,,\\
        u&=&g   \qquad & {\rm for } \quad z=0\,
      \end{array}\right.
  \end{equation}
 and define the constant extension $\tilde g(x',z,t):=g(x',t).$ 
 Assume $f$ and $g$ to be horizontally band-limited, i.e
  $$\F f(k',z,t)=0 \quad\mbox{ unless }\quad 1\leq R|k'|\leq 4\,$$
  and
  $$\F g(k',z,t)=0 \quad\mbox{ unless }\quad 1\leq R|k'|\leq 4\,.$$
  Then
  \begin{equation}\label{3}
   ||\nabla u||_{(0,\infty)}\lesssim||f||_{(0,\infty)}+||\nabla'\tilde{g}||_{(0,\infty)}\,.
     \end{equation}
\end{lemma}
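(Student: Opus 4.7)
The plan is to decompose $u = u_g + u_f$ by linearity, where $u_g$ solves the homogeneous problem $(\partial_z + (-\Delta')^{1/2})u_g = 0$ with $u_g|_{z=0} = g$, and $u_f$ solves the inhomogeneous problem with forcing $f$ and zero boundary data. In the horizontal Fourier variables these have the explicit representations $\mathcal F' u_g(k', z, t) = e^{-|k'|z}\,\mathcal F' g(k', t)$ (Poisson extension) and $\mathcal F' u_f(k', z, t) = \int_0^z e^{-|k'|(z - z')}\,\mathcal F' f(k', z', t)\,dz'$. The piece $u_f$ will be handled in complete parallel with Lemma \ref{lemma1}, the only change being that the $z'$-integration runs over $(0, z)$ rather than $(z, \infty)$; the piece $u_g$ is the genuinely new ingredient and is what produces the term $\|\nabla' \tilde g\|_{(0,\infty)}$ on the right-hand side.

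For $u_g$, both components of $\nabla u_g$ are, in Fourier, products of $|k'|\,e^{-|k'|z}$ with $\mathcal F' g$, up to a bounded Riesz-type multiplier of unit norm: $\widehat{\nabla' u_g} = e^{-|k'|z}\,\widehat{\nabla' g}$, and $\widehat{\partial_z u_g} = -\sum_j \frac{k'_j}{|k'|}\,e^{-|k'|z}\,\widehat{\partial_{x'_j} g}$. Writing $\chi$ for a smooth cutoff equal to $1$ on $\{1 \leq |\cdot| \leq 4\}$ and setting $P_z := \mathcal F'^{-1}(\chi(Rk')\,e^{-|k'|z})$, the rescaling $k' \mapsto k'/R$ together with the fact that a smooth compactly supported symbol has a Schwartz inverse Fourier transform yields $\|P_z\|_{L^1_{x'}} \lesssim 1$ uniformly in $z > 0$ (with exponential decay $\lesssim e^{-cz/R}$). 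Since the Riesz transforms are also $L^1_{x'}$-bounded when restricted to the band $1 \le R|k'| \le 4$, this gives $\sup_{z > 0}\langle |\nabla u_g|\rangle \lesssim \langle |\nabla' g|\rangle$, which controls the $\sup_z$-part in the definition of $\|\nabla u_g\|_{(0,\infty)}$ by the trivial decomposition $\nabla' \tilde g = \nabla' \tilde g + 0$ in the definition of $\|\nabla' \tilde g\|_{(0,\infty)}$.

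For $u_f$ I fix a near-optimal splitting $f = f_0 + f_1$ for $\|f\|_{(0,\infty)}$ and decompose $u_f = u_f^{(0)} + u_f^{(1)}$ accordingly. The contribution from $f_0$ is handled by the same Poisson-kernel bound as above, which, combined with Bernstein's inequality $\|(-\Delta')^{1/2} v\|_{L^1_{x'}} \lesssim R^{-1}\|v\|_{L^1_{x'}}$ on horizontally band-limited $v$ and the identity $\partial_z u_f^{(0)} = f_0 - (-\Delta')^{1/2}u_f^{(0)}$, yields $\sup_z \langle|\nabla u_f^{(0)}|\rangle \lesssim \sup_z\langle|f_0|\rangle$. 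The contribution from $f_1$ requires the bound $\int_0^\infty \langle|\nabla u_f^{(1)}|\rangle\,\frac{dz}{z} \lesssim \int_0^\infty \langle|f_1|\rangle\,\frac{dz'}{z'}$, which, modulo the Bernstein passage and the diagonal term $f_1(z)$, reduces to the $L^1(\frac{dz}{z}) \to L^1(\frac{dz}{z})$-boundedness of the convolution with kernel $K(z, z') = |k'|\,e^{-|k'|(z - z')}\,\mathbf{1}_{0 < z' < z}$; by the Schur test this amounts to $\sup_{z'} \int_{z'}^\infty K(z, z')\,\frac{z'}{z}\,dz \lesssim 1$, which after the substitution $s = z - z'$ and $\lambda := |k'|z' \sim z'/R$ becomes the uniform bound $\sup_{\lambda > 0} \int_0^\infty \frac{\lambda\,e^{-u}}{\lambda + u}\,du \lesssim 1$.

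The hard part is this last Schur estimate, because the weight $1/z$ is borderline: the inner integral behaves like $\lambda \log(1/\lambda)$ for small $\lambda$, is $O(1)$ for moderate $\lambda$, and tends to $1$ as $\lambda \to \infty$, so it remains bounded by the narrowest of margins. This is the quantitative statement that the band-limit $|k'| \gtrsim 1/R$ just suffices to tame the singular weight $1/z$ at $z = 0$; it is the forward counterpart of the analogous borderline computation in the proof of Lemma \ref{lemma1}, and is precisely the place where horizontal bandedness is indispensable.
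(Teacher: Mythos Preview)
Your argument is correct, but it takes a different route from the paper in two places, and it is worth seeing what each buys.

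\medskip

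\textbf{Handling of the boundary data.} The paper does not split off the Poisson extension $u_g$; instead it subtracts the \emph{constant} extension $\tilde g$ itself. Since $\partial_z\tilde g=0$, the function $u-\tilde g$ solves the forward problem $(\partial_z+(-\Delta')^{1/2})(u-\tilde g)=f-(-\Delta')^{1/2}\tilde g$ with zero boundary data, so the case $g\neq 0$ reduces \emph{verbatim} to the case $g=0$ with the modified forcing $f-(-\Delta')^{1/2}\tilde g$; one then appeals to the bandedness equivalence $\|(-\Delta')^{1/2}\tilde g\|_{(0,\infty)}\sim\|\nabla'\tilde g\|_{(0,\infty)}$. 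Your route via $u_g$ is equally valid but costs an extra direct estimate on the Poisson extension; the paper's subtraction of $\tilde g$ is the more economical reduction. (Incidentally, your closing remark about ``the trivial decomposition $\nabla'\tilde g=\nabla'\tilde g+0$'' is phrased backwards: that decomposition shows $\|\nabla'\tilde g\|_{(0,\infty)}\le\langle|\nabla'g|\rangle$, whereas you need the reverse inequality. The reverse does hold---for a $z$-independent function $h$, finiteness of $\int\langle|h_1|\rangle\,dz/z$ forces $\langle|h_1(z)|\rangle\to 0$ along some sequence, whence $\langle|h|\rangle\le\sup_z\langle|h_0|\rangle$---but this deserves a word.)

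\medskip

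\textbf{The weighted estimate.} For the $L^1(dz/z)\to L^1(dz/z)$ bound, the paper works with the physical-space Poisson kernel and obtains $\langle|\nabla'u_{z_0}(\cdot,z)|\rangle'\lesssim\min\{R^{-1},R(z-z_0)^{-2}\}\langle|f(\cdot,z_0)|\rangle'$ directly from the bandedness assumption. The key observation is then simply that $z>z_0$ in the forward Duhamel integral, so $1/z\le 1/z_0$; multiplying by $1/z$ and integrating, Young's convolution inequality (the kernel $\min\{R^{-1},R s^{-2}\}$ has $L^1_s$-norm $\lesssim 1$) finishes the job in one line. Your Schur test with the exponential kernel $|k'|e^{-|k'|(z-z')}$ is a legitimate alternative and makes the role of the band-limit explicit, but the paper's monotonicity trick $1/z\le 1/z_0$ is quicker and avoids the delicate small-$\lambda$ analysis of $\int_0^\infty\frac{\lambda e^{-u}}{\lambda+u}\,du$ altogether.
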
	
  
\begin{remark}
 Clearly if $g=0$ in Lemma \ref{lemma2}, then we have
  \begin{equation}\label{2}
  ||\nabla u||_{(0,\infty)}\lesssim||f||_{(0,\infty)}\,.
  \end{equation}
\end{remark}
		  
\begin{lemma}\label{lemma3}\ \\
   Let $u,f$ satisfy  the problem
     \begin{equation}\label{III}
     \left\{\begin{array}{rclc}
	(\partial_t-\Delta) u&=&f \qquad & {\rm for } \quad z>0\,,\\ 
	u&=&0 \qquad & {\rm for } \quad z=0\,,\\
	u&=&0 \qquad & {\rm for } \quad t=0\,\\
      \end{array}\right.
     \end{equation}
     and assume $f$ to be horizontally band-limited, i.e
   $$\F f(k',z,t)=0 \quad\mbox{ unless }\quad 1\leq R|k'|\leq 4\,.$$

Then, 				
 \begin{equation}\label{4}
 ||(\partial_t-\partial_z^2)u||_{(0,\infty)}+||\nabla'\nabla u||_{(0,\infty)}\lesssim ||f||_{(0,\infty)}\,.
 \end{equation}
       
 \end{lemma}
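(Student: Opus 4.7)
The plan is to extend the data by odd reflection across $\{z=0\}$, reducing to the free heat equation on the whole space with zero initial data. In horizontal Fourier variables the solution reads
\begin{equation*}
\hat u(k',z,t)=\int_0^t\!\!\int_0^\infty e^{-|k'|^2(t-s)}\bigl(K_1(z-w,t-s)-K_1(z+w,t-s)\bigr)\hat f(k',w,s)\,dw\,ds,
\end{equation*}
where $K_1$ is the $1$D heat kernel. Since the equation gives $(\partial_t-\partial_z^2)u=\Delta'u+f$, the bound on $(\partial_t-\partial_z^2)u$ follows from the one on $\nabla'\nabla u$, so I focus on the latter.

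Because $\|\cdot\|_{(0,\infty)}$ is an infimum over band-limited decompositions $f=f_0+f_1$, linearity lets me split $u=u_0+u_1$ accordingly and reduces the claim to the two endpoint maximal-regularity bounds
\begin{equation*}
\sup_z\langle|\nabla'\nabla u_0|\rangle\lesssim\sup_z\langle|f_0|\rangle,\qquad\int_0^\infty\langle|\nabla'\nabla u_1|\rangle\tfrac{dz}{z}\lesssim\int_0^\infty\langle|f_1|\rangle\tfrac{dz}{z},
\end{equation*}
with constants independent of the bandedness radius $R$. The first, $L^\infty_z L^1_{t,x'}$-type estimate is the straightforward one: Young's inequality in $(x',t)$ after horizontal Fourier transform reduces matters to showing that $(|k'|^2+|k'||\partial_z|)e^{-|k'|^2\tau}|K_1(z\mp w,\tau)|$ has $L^1(dw\,d\tau)$ norm $O(1)$ uniformly in $z$ and $k'$, which follows from elementary Gaussian estimates combined with the exponential damping $e^{-|k'|^2\tau}$ and the bandedness $|k'|\sim R^{-1}$.

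The second, weighted $L^1$ estimate is the main obstacle, since $\tfrac{1}{z}$ is a borderline Muckenhoupt weight on $(0,\infty)$ so that a direct Calder\'on--Zygmund argument fails logarithmically. The rescue comes from the cancellation built into the odd-reflection kernel:
\begin{equation*}
K_1(z-w,\tau)-K_1(z+w,\tau)=(4\pi\tau)^{-1/2}e^{-(z^2+w^2)/(4\tau)}\cdot 2\sinh\!\bigl(\tfrac{zw}{2\tau}\bigr),
\end{equation*}
which for $zw\ll\tau$ gains an extra factor of $z$ that precisely absorbs the singular weight $\tfrac{1}{z}$ near the boundary. I would split $(0,\infty)$ into the near-boundary region $z\ll\sqrt\tau$ (where this cancellation is used) and the far region $z\gtrsim\sqrt\tau$ (where direct Gaussian bounds on $K_1$ and $\partial_z K_1$ suffice), and prove the transpose kernel estimate $\int_0^\infty|\nabla'\nabla G(k',z,w,\tau)|\,\tfrac{dz}{z}\,d\tau\lesssim\tfrac{1}{w}$, with $e^{-|k'|^2\tau}$ together with $|k'|\sim R^{-1}$ guaranteeing $R$-independence of the constant. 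Minkowski's integral inequality then closes the weighted bound.
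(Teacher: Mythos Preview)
Your reduction to the two endpoint estimates is correct, and your argument for the $\nabla'^2$ part (and hence for $(\partial_t-\partial_z^2)u=\Delta'u+f$) is essentially the paper's: the odd-reflected kernel $K_1(z-w,\tau)-K_1(z+w,\tau)$ vanishes at $z=0$, and this cancellation is exactly what makes the weighted estimate $\int\langle|\nabla'^2u|\rangle\tfrac{dz}{z}\lesssim\int\langle|f|\rangle\tfrac{dz}{z}$ work (the paper packages this as Lemma~5).

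The gap is in the mixed derivative $\nabla'\partial_z u$. Your claimed weighted-to-weighted endpoint
\[
\int_0^\infty\langle|\nabla'\partial_z u_1|\rangle\,\frac{dz}{z}\ \lesssim\ \int_0^\infty\langle|f_1|\rangle\,\frac{dz}{z}
\]
is \emph{false} for the Dirichlet problem. Differentiating your $\sinh$ formula in $z$ gives a $\cosh$ term, so $\partial_z\bigl[K_1(z-w,\tau)-K_1(z+w,\tau)\bigr]\big|_{z=0}=(4\pi\tau)^{-1/2}e^{-w^2/4\tau}\,\tfrac{w}{\tau}\neq 0$; equivalently, $u=0$ at $z=0$ forces $\partial_z u|_{z=0}\neq 0$ in general, so $\langle|\nabla'\partial_z u|\rangle/z$ is not integrable near the boundary. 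The ``extra factor of $z$'' you invoke is simply not there once $\partial_z$ hits the kernel, and your transpose kernel bound $\int_0^\infty|\nabla'\partial_z G|\,\tfrac{dz}{z}\,d\tau\lesssim\tfrac{1}{w}$ diverges logarithmically at $z=0$.

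The paper's fix exploits the \emph{interpolation} structure of $\|\cdot\|_{(0,\infty)}$ rather than insisting on both endpoints separately. For the piece $u_2$ driven by $f_2$ (the weighted half), it writes $u_2=u_{2N}+u_{2C}$, where $u_{2N}$ solves the heat equation with \emph{Neumann} data (even reflection) and $u_{2C}$ is the homogeneous correction with $\partial_z u_{2C}|_{z=0}=\partial_z u_2|_{z=0}$. Under even reflection $\partial_z\bigl[K_1(z-w)+K_1(z+w)\bigr]$ \emph{does} vanish at $z=0$, so the weighted bound $\int\langle|\nabla'\partial_z u_{2N}|\rangle\tfrac{dz}{z}\lesssim\int\langle|f_2|\rangle\tfrac{dz}{z}$ holds. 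The correction $u_{2C}$ is instead placed in the $\sup_z$ half of the norm: one shows $\sup_z\langle|\nabla'\partial_z u_{2C}|\rangle\lesssim\langle|\nabla'\partial_z u_2|_{z=0}|\rangle$ and then the trace estimate $\langle|\nabla'\partial_z u_2|_{z=0}|\rangle\lesssim\int\langle|f_2|\rangle\tfrac{dz}{z}$. Thus the decomposition $\nabla'\partial_z u=(\nabla'\partial_z u_1+\nabla'\partial_z u_{2C})+\nabla'\partial_z u_{2N}$ witnesses the interpolation norm, with both pieces controlled by $\sup_z\langle|f_1|\rangle+\int\langle|f_2|\rangle\tfrac{dz}{z}$.
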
       
               
\vspace{2cm}

\begin{proof} [Proof of Proposition \ref{prop3}]\ \\
 
\begin{enumerate}

 \item 

Subtracting the quantity $(\partial_z-(-\Delta')^{\frac{1}{2}})(f^z+\partial_z\rho)$
 from both sides of equation (\ref{FraBack})
 and then multiplying the new equation by $(-\Delta)^{-\frac{1}{2}}$ we get

\begin{eqnarray*}
&&(\partial_z-(-\Delta')^{\frac{1}{2}})(-\Delta')^{-\frac{1}{2}}(\phi-f^z-\partial_z\rho)\\
&=&\nabla'\cdot(-\Delta')^{-\frac{1}{2}} f'+f^z-(-\Delta')^{-\frac{1}{2}}\partial_t\rho+\partial_z \rho-(-\Delta')^{\frac{1}{2}}\rho\,.
\end{eqnarray*}

From the basic estimate (\ref{1}) we obtain

\begin{eqnarray*}
&&||\nabla'(-\Delta')^{-\frac{1}{2}}(\phi-f^z-\partial_z\rho)||_{(0,\infty)}\lesssim ||\nabla'\cdot(-\Delta')^{-\frac{1}{2}} f'||_{(0,\infty)}\\
&+&||f^z||_{(0,\infty)}+||(-\Delta')^{-\frac{1}{2}}\partial_t\rho||_{(0,\infty)}+||\partial_z \rho||_{(0,\infty)}+||(-\Delta')^{\frac{1}{2}}\rho||_{(0,\infty)}\,.
\end{eqnarray*}

Thanks to the bandedness assumption in the form of  (\ref{P})  and (\ref{Q}) we have
\begin{eqnarray*}
&&||\phi-f^z-\partial_z\rho||_{(0,\infty)}\\
&\lesssim& || f'||_{(0,\infty)}+||f^z||_{(0,\infty)}+||(-\Delta')^{-\frac{1}{2}}\partial_t\rho||_{(0,\infty)}+||\partial_z \rho||_{(0,\infty)}+||\nabla'\rho||_{(0,\infty)}
\end{eqnarray*}
and from this we obtain easily the desired estimate (\ref{A}).
\item 

After multiplying the equation  (\ref{Heat1}) by $(-\Delta')^{-\frac{1}{2}}$, 
the application of (\ref{4}) to $(-\Delta')^{-\frac{1}{2}}v^z$ yields

\begin{eqnarray*}
&&||(-\Delta')^{-\frac{1}{2}}(\partial_t-\partial_z^2)v^z||_{(0,\infty)}+||(-\Delta')^{-\frac{1}{2}}\nabla'\nabla v^z||_{(0,\infty)}\\
&\lesssim& ||f^z||_{(0,\infty)}+||\phi||_{(0,\infty)}+||\nabla'\cdot(-\Delta')^{-\frac{1}{2}}f'||_{(0,\infty)}\\
&+&||(-\Delta')^{-\frac{1}{2}}(\partial_t-\partial_z^2)\rho||_{(0,\infty)}+||(-\Delta')^{\frac{1}{2}}\rho||_{(0,\infty)}\,.
\end{eqnarray*}

The estimate (\ref{B}) follows after observing (\ref{Q}) and applying the triangle inequality to the second to last
 term on the right hand side.
 
\item

We need to estimate the the three terms on the right hand side of (\ref{C}) separately.
We start with the term $\nabla^2 u^z$:
 since $||\nabla^2 u^z||_{(0,\infty)}\leq||\nabla'\nabla u^z||_{(0,\infty)}+||\partial_z^2 u^z||_{(0,\infty)}$, 
we tackle the term $\nabla'\nabla u^z$ and $\partial_z^2 u^z$ separately.
First multiply by $\nabla'$ the equation (\ref{FraFor}).  An application
 of the estimate (\ref{2}) to $\nabla' u^z$ yields
\begin{equation}\label{G}||\nabla\nabla'u^z||_{(0,\infty)}\lesssim ||\nabla' v^z||_{(0,\infty)} .\end{equation}
Now multiplying the equation (\ref{FraFor}) by $\partial_z^2$ 
\begin{equation}\label{E}\partial_z^2 u^z=-(-\Delta')^{\frac{1}{2}}\partial_z u^z+\partial_z v^z=-\Delta'u^z-(-\Delta')^{\frac{1}{2}}v^z+\partial_z v^z\end{equation}
and using the bandedness assumption in the form (\ref{Q}) we have 
\begin{equation}\label{i}
\begin{array}{rclc}
||\partial_z^2 u^z||_{(0,\infty)}&\leq& ||\nabla'^2 u^z||_{(0,\infty)}+||\nabla v^z||_{(0,\infty)}\\
&\stackrel{(\ref{G})}{\leq}&||\nabla v^z||_{(0,\infty)}\,.
\end{array}
\end{equation}

 The second term  of (\ref{C}), i.e $(-\Delta')^{-\frac{1}{2}}\partial_z(\partial_t-\partial_z^2)u^z$, can be bounded in the following way:
We multiply the equation  (\ref{FraFor}) by $(-\Delta')^{-\frac{1}{2}}(\partial_t-\partial_z^2)$

\begin{equation*}
    \left\{\begin{array}{rclc}
(\partial_z+(-\Delta')^{\frac{1}{2}})(-\Delta')^{-\frac{1}{2}}(\partial_t-\partial_z^2)u^z&=& (-\Delta')^{-\frac{1}{2}}(\partial_t-\partial_z^2)v^z \qquad & {\rm for } \quad   z>0,\\
(-\Delta')^{-\frac{1}{2}}(\partial_t-\partial_z^2)u^z&=&(-\Delta')^{-\frac{1}{2}}\partial_zv^z \qquad & {\rm for } \quad   z=0,
    \end{array}\right.
  \end{equation*}

where we have used  that at $z=0$

$$(\partial_t-\partial_z^2)u^z=-\partial_z^2 u^z\stackrel{(\ref{E})}{=}\partial_zv^z.$$
Applying (\ref{3}) to $(-\Delta')^{-\frac{1}{2}}(\partial_t-\partial_z^2)u^z$ and using the bandedness assumption in the form of (\ref{P}),

\begin{equation}\label{F}
||\nabla(-\Delta')^{-\frac{1}{2}}(\partial_t-\partial_z^2)u^z||_{(0,\infty)}\lesssim ||(-\Delta')^{-\frac{1}{2}}(\partial_t-\partial_z^2)v^z||_{(0,\infty)}+||\partial_zv^z||_{(0,\infty)}\,.
\end{equation}
%
%
%

Finally we can bound the last term of (\ref{C}), i.e $\partial_t u^z$:
We observe that $\partial_t u^z=(\partial_t -\partial_z^2 )u^z+\partial_z^2 u^z$ thus 

\begin{equation}\label{H}
||\partial_t u^z||_{(0,\infty)}\leq ||(\partial_t -\partial_z^2 )u^z||_{(0,\infty)}+||\partial_z^2 u^z||_{(0,\infty)}\,.
\end{equation}

For the first term in the right hand side of (\ref{H})
we notice that 
\begin{eqnarray*}
||(\partial_t-\partial_z^2)u^z||_{(0,\infty)}
&\stackrel{(\ref{P})}{\leq}&||(-\Delta')^{-\frac{1}{2}}\nabla'(\partial_t-\partial_z^2)u^z||_{(0,\infty)}\\
&\stackrel{(\ref{F})}{\lesssim}&||(-\Delta')^{-\frac{1}{2}}(\partial_t-\partial_z^2)v^z||_{(0,\infty)}+||\partial_z v^z||_{(0,\infty)}\\
&\lesssim&||(-\Delta')^{-\frac{1}{2}}(\partial_t-\partial_z^2)v^z||_{(0,\infty)}+||\nabla v^z||_{(0,\infty)}\,.
\end{eqnarray*}
The  second term  on the right hand side of (\ref{H}) is bounded in (\ref{i}). 
Thus we have the following bound for $\partial_t u$
\begin{equation}\label{M}||\partial_t u^z||_{(0,\infty)}\leq ||(-\Delta')^{-\frac{1}{2}}(\partial_t-\partial_z^2)v^z||_{(0,\infty)}+||\nabla v^z||_{(0,\infty)}\,.\end{equation}

Putting together all the above we obtain the desired estimate.
%
 
\item From the defining equation (\ref{Heat2}), the  basic estimate (\ref{4})
 and the bandedness assumption in form of (\ref{R}), we get

$$||(\partial_t-\partial_z^2)v'||_{(0,\infty)}+||\nabla'\nabla v'||_{(0,\infty)}\lesssim ||f'||_{(0,\infty)}\,.$$


\end{enumerate}
\end{proof}

\subsection{Proof of Theorem \ref{th1}}\label{ProofTh1}

Let $u,p,f$ be the solutions of the non-stationary Stokes equations in the strip $0<z<1$ (\ref{STOKES-STRIP}).
Then $\tilde u=\eta u,\tilde p=\eta p$ (with $\eta$ defined in (\ref{cutoff}) satisfy (\ref{UHS}), namely
 \begin{equation*}
  \left\{\begin{array}{rclc}
      \partial_t \tilde u-\Delta \tilde u+\nabla \tilde p &=& \tilde f \qquad & {\rm for } \quad z>0\,,\\
        \nabla\cdot \tilde u &=& \tilde\rho \qquad & {\rm for } \quad z>0 \,,\\
        \tilde u &=&  0 \qquad & {\rm for } \quad z=0\,,\\
         \tilde u &=& 0  \qquad  & {\rm for } \quad t=0 \,,\\
         \end{array}\right.        
 \end{equation*}
where 
\begin{equation}\label{Defi-bis}
\tilde f:=\eta f-2(\partial_z \eta)\partial_z u-(\partial_z^2\eta )u+(\partial_z\eta )pe_z, \qquad \qquad  \tilde\rho:=(\partial_z\eta )u^z\,.
\end{equation}
Since, by assumption $f,\rho$  are horizontally band-limited , 
then also $\tilde f$ and $\tilde \rho$  satisfy the horizontal bandedness assumption (\ref{BC1}) and (\ref{BC2}) respectively.
We can therefore apply Proposition \ref{pr1} to the upper half space problem (\ref{UHS}) and get
\begin{eqnarray*}
&&||(\partial_t -\partial_z^2)\tilde{u}'||_{(0,\infty)}+ ||\nabla'\nabla \tilde{u}'||_{(0,\infty)}+||\partial_t \tilde{u}^z||_{(0,\infty)}+||\nabla^2 \tilde{u}^z||_{(0,\infty)}+||\nabla \tilde{p}||_{(0,\infty)}\\
&\lesssim&||\tilde{f}||_{(0,\infty)}+||(-\Delta')^{-\frac{1}{2}}\partial_t \tilde{\rho}||_{(0,\infty)}+||(-\Delta')^{-\frac{1}{2}}\partial_z^2 \tilde{\rho} ||_{(0,\infty)}+||\nabla \tilde{\rho}||_{(0,\infty)}\,.
\end{eqnarray*}
By symmetry, we also have the same maximal regularity estimates in the lower half space. Indeed, let
 $\tilde{\tilde u} ,\tilde{\tilde p}$ satisfy the equation 
 \begin{equation}\label{LHS}
  \left\{\begin{array}{rclc}
      \partial_t \tilde{\tilde u}-\Delta \tilde{\tilde u}+\nabla \tilde {\tilde p} &=& \tilde{\tilde{f}} \qquad & {\rm for } \quad z<1\,,\\
        \nabla\cdot \tilde {\tilde u} &=& \tilde{\tilde\rho} \qquad & {\rm for } \quad z<1 \,,\\
        \tilde {\tilde u} &=&  0 \qquad & {\rm for } \quad z=1\,,\\
         \tilde {\tilde u} &=& 0  \qquad  & {\rm for } \quad t=0 \,,\\
         \end{array}\right.        
 \end{equation}
where 
\begin{equation}\label{Defi2}
\tilde{\tilde f}:=(1-\eta) f-2(\partial_z (1-\eta))\partial_z u-(\partial_z^2(1-\eta) )u+(\partial_z(1-\eta) )pe_z, \qquad   \tilde{\tilde\rho}:=(\partial_z(1-\eta) )u^z\,.
\end{equation}
Again by Proposition \ref{prop3} we have 
\begin{eqnarray*}
&&||(\partial_t -\partial_z^2)\tilde{\tilde{u}}'||_{(-\infty,1)}+ ||\nabla'\nabla \tilde{\tilde{u}}'||_{(-\infty,1)}+ ||\partial_t \tilde{\tilde{u}}^z||_{(-\infty,1)}+||\nabla^2 \tilde{\tilde{u}}^z||_{(-\infty,1)}+||\nabla \tilde{\tilde p}||_{(-\infty,1)}\\
&\lesssim&||\tilde{\tilde{f}}||_{(-\infty,1)}+||(-\Delta')^{-\frac{1}{2}}\partial_t \tilde{\tilde{\rho}}||_{(-\infty,1)}+||(-\Delta')^{-\frac{1}{2}}\partial_z^2 \tilde{\tilde{\rho}} ||_{(-\infty,1)}+||\nabla \tilde{\tilde{\rho}}||_{(-\infty,1)},
\end{eqnarray*}
where $||\cdot||_{(-\infty,1)}$ is the analogue of (\ref{NORM-HALF}) (see Section (\ref{notations}) for notations).
Since $u=\tilde u+\tilde{\tilde u}$ in the strip $[0,L)^{d-1}\times (0,1)$, by the triangle inequality and using the maximal regularity estimates above, we get
\begin{eqnarray*}
&&||(\partial_t -\partial_z^2)u'||_{(0,1)}+||\nabla'\nabla u'||_{(0,1)}+||\partial_t u^z||_{(0,1)}+||\nabla^2 u^z||_{(0,1)}+||\nabla p||_{(0,1)}\\
&\lesssim&||(\partial_t -\partial_z^2)\tilde{u}'||_{(0,\infty)}+||(\partial_t -\partial_z^2)\tilde{\tilde{u}}'||_{(-\infty,1)}+||\nabla'\nabla \tilde{u}'||_{(0,\infty)}+||\nabla'\nabla \tilde{\tilde{u}}'||_{(-\infty,1)}\\
&+&||\partial_t \tilde{u}^z||_{(0,\infty)}+ ||\partial_t \tilde{\tilde{u}}^z||_{(-\infty,1)}+ ||\nabla^2 \tilde{u}^z||_{(0,\infty)}+ ||\nabla^2 \tilde{\tilde{u}}^z||_{(-\infty,1)}\\
&+&||\nabla \tilde{p}||_{(0,\infty)}+||\nabla \tilde{\tilde p}||_{(-\infty,1)}\\
&\lesssim&||\tilde{f}||_{(0,\infty)}+||\tilde{\tilde{f}}||_{(-\infty,1)}+||(-\Delta')^{-\frac{1}{2}}\partial_t \tilde{\rho}||_{(0,\infty)}+||(-\Delta')^{-\frac{1}{2}}\partial_t \tilde{\tilde{\rho}}||_{(-\infty,1)}\\
&+&||(-\Delta')^{-\frac{1}{2}}\partial_z^2 \tilde{\rho} ||_{(0,\infty)}+||(-\Delta')^{-\frac{1}{2}}\partial_z^2 \tilde{\tilde{\rho}} ||_{(-\infty,1)}+||\nabla \tilde{\rho}||_{(0,\infty)}+||\nabla \tilde{\tilde{\rho}}||_{(-\infty,1)}\,.
\end{eqnarray*}
By the definitions of $\tilde{f}$ and $\tilde{\tilde{f}}$ we get

$$||\tilde{f}||_{(0,\infty)}+||\tilde{\tilde{f}}||_{(-\infty,1)}\lesssim ||f||_{(0,1)}+||\partial_z u||_{(0,1)}+||u||_{(0,1)}+||p||_{(0,1)}$$
and similarly for $\tilde{\rho}$ and $\tilde{\tilde{\rho}}$ we have
$$||\nabla \tilde{\rho}||_{(0,\infty)}+||\nabla \tilde{\tilde{\rho}}||_{(-\infty,1)}\lesssim ||\nabla u||_{(0,1)}+||u||_{(0,1)}$$
$$ ||(-\Delta')^{-\frac{1}{2}}\partial_t \tilde{\rho}||_{(0,\infty)}+||(-\Delta')^{-\frac{1}{2}}\partial_t \tilde{\tilde{\rho}}||_{(-\infty,1)}\lesssim || (-\Delta')^{-\frac{1}{2}}\partial_t u||_{(0,1)} $$
and 
\begin{eqnarray*}
&&||(-\Delta')^{-\frac{1}{2}}\partial_z^2 \tilde{\rho} ||_{(0,\infty)}+||(-\Delta')^{-\frac{1}{2}}\partial_z^2 \tilde{\tilde{\rho}} ||_{(-\infty,1)} \\
&\lesssim&||(-\Delta')^{-\frac{1}{2}}u^z||_{(0,1)}+||(-\Delta')^{-\frac{1}{2}}\partial_zu^z||_{(0,1)}+||(-\Delta')^{-\frac{1}{2}}\partial^2_zu^z||_{(0,1)}\,.
\end{eqnarray*}
Therefore, collecting the estimates, we have 
\begin{eqnarray*}
&&||(\partial_t -\partial_z^2)u'||_{(0,1)}+||\nabla'\nabla u'||_{(0,1)}+||\partial_t u^z||_{(0,1)}+||\nabla^2 u^z||_{(0,1)}+||\nabla p||_{(0,1)}\\
&\lesssim&||f||_{(0,1)}+||p||_{(0,1)}+||\nabla u||_{(0,1)}+||u||_{(0,1)}\\
&+& || (-\Delta')^{-\frac{1}{2}}\partial_t u||_{(0,1)}+||(-\Delta')^{-\frac{1}{2}}u^z||_{(0,1)}+||(-\Delta')^{-\frac{1}{2}}\partial_zu^z||_{(0,1)}+||(-\Delta')^{-\frac{1}{2}}\partial^2_zu^z||_{(0,1)}\,.
\end{eqnarray*}
Incorporating the horizontal bandedness assumption we find
 \begin{eqnarray*}
 ||\partial_z u||_{(0,1)}&\leq& R  ||\nabla'\partial_z u||_{(0,1)}\,,\\
 ||u||_{(0,1)}&\leq& R^2||(\nabla')^2 u||_{(0,1)} \,, \\    
 ||p||_{(0,1)}&\leq& R||\nabla'p||_{(0,1)} ,\\
 ||\nabla u||_{(0,1)}&\leq& R||\nabla'\nabla u||_{(0,1)},\\
 || (-\Delta')^{-\frac{1}{2}}\partial_t u||_{(0,1)}&\leq& R || \partial_t u||_{(0,1)}\,,\\
 ||(-\Delta')^{-\frac{1}{2}}u^z||_{(0,1)}&\leq&  R^3||\nabla'^2u^z||_{(0,1)}\,,\\
 ||(-\Delta')^{-\frac{1}{2}}\partial_zu^z||_{(0,1)}&\leq& R^2 ||\nabla'\partial_zu^z||_{(0,1)}\,,\\
 ||(-\Delta')^{-\frac{1}{2}}\partial^2_zu^z||_{(0,1)}&\leq& R||\partial^2_zu^z||_{(0,1)} \,.      
 \end{eqnarray*}               
Thus, for $R<R_0$ where $R_0$ is sufficiently small, all the terms in the right hand side,
except $f$ can be absorbed into the left hand side and the conclusion follows.

\section{Proof of main technical lemmas}\label{tec}

\begin{remark}\label{TIME}
 In the proof of Lemma \ref{lemma1}, Lemma \ref{lemma2} and Lemma \ref{lemma3} we  will derive inequalities between quantities  where $t$ is integrated between $0$ and $\infty$. 
  From the proof it is clear that the same inequalities are true  with $t$ integrated between $0$ and $t_0$ with constants that
  are not depending on $t_0$. Therefore
  dividing by $t_0$ and taking $\limsup_{t_{0}\rightarrow \infty}$ (see (\ref{LTaHA})) we shall obtain the desired estimates in terms of the interpolation norm (\ref{NORM-HALF}).
\end{remark}

\subsection{Proof of Lemma \ref{lemma1}}

\begin{proof}[Proof of Lemma \ref{lemma1}]\ \\
	      In order to simplify the notations, in what follows we will omit
	      the dependency of the functions from the time variable.	      
              It is enough to show 
	      \begin{equation*}
	       \label{U}||\nabla' u||_{(0,\infty)}\lesssim ||f||_{(0,\infty)},
	      \end{equation*}
              since, by equation (\ref{I}) $\partial_z u=(-\Delta')^{\frac{1}{2}} u+f$.
              We claim that, in order to prove (\ref{U}), it is enough to show
              \begin{equation}\label{V} 
	       \sup_z\langle|\nabla' u|\rangle'\lesssim \sup_z \langle|f|\rangle'
	      \end{equation}
              and 
              \begin{equation}\label{Z} 
	       ||\nabla'u||_{(0,\infty)}\lesssim \int\langle|f|\rangle'\frac{dz}{z}\,.
	      \end{equation}
               Indeed, by definition of the norm $||\cdot||_{(0,\infty)}$ (see (\ref{NORM-HALF}))
               if we select an arbitrary decomposition $\nabla'u=\nabla'u_1+\nabla'u_2$, where $u_1$ and $u_2$ are 
               solutions of the problem (\ref{I}) with right hand sides $f_1$ and $f_2$ respectively, we have 
              
              \begin{eqnarray*}
               ||\nabla' u||_{(0,\infty)}&\leq& ||\nabla' u_1||_{(0,\infty)}+\sup_{z}\langle|\nabla' u_2|\rangle'\\
               &\leq& \int\langle| f_1|\rangle'\frac{dz}{z}+\sup_z \langle|f_2|\rangle'\,.\\
              \end{eqnarray*}                           
               Passing to the infimum over all the decompositions of $f$ we obtain
               $$||\nabla' u||_{(0,\infty)}\lesssim ||f||_{(0,\infty)}.$$
              We recall that by Duhamel's principle we have the following representation
              \begin{equation}\label{DU}
              u(x',z)=\int_z^{\infty} u_{x',z_0}(z)dz_0,
              \end{equation}
              where $u_{z_0}$  is the harmonic extension of $f(\cdot,z_0)$ onto $\{z<z_0\}$, i.e it solves the boundary value problem 

              \begin{equation}\label{FraBackDu}
                \left\{\begin{array}{rclc}
                      (\partial_z-(-\Delta')^{\frac{1}{2}}) u_{z_0}&=&0  \qquad & {\rm for } \quad z<z_0\,,\\
                       u_{z_0}&=&f  \qquad & {\rm for } \quad z=z_0\,.\\
                       \end{array}\right.        
               \end{equation}                
                     \underline{Argument for (\ref{V}):} 
                     \newline
                     Using the representation of the solution of (\ref{FraBackDu}) via the Poisson kernel, i.e
                     $$u_{z_0}(x',z)=\int\frac{z_0-z}{(|x'-y'|^2+(z_0-z)^2)^{\frac{d}{2}}}f(x',z_0) dy'$$
                     we obtain the following bounds
                     \begin{equation}\label{AA}
                     \langle|\nabla' u_{z_0}(\cdot,z)|\rangle'\lesssim 
                     \left\{\begin{array}{lll}&&\langle|\nabla' f(\cdot,z_0)|\rangle',\\                    
                      &\frac{1}{(z_0-z)}&\langle| f(\cdot,z_0)|\rangle',\\                                       
                      &\frac{1}{(z_0-z)^2}&\langle|\nabla'(-\Delta')^{-1}f(\cdot,z_0)|\rangle'.\\                                       
                      \end{array}\right.                                       
                      \end{equation}                                      
		     By using the bandedness assumption in the form of (\ref{BAND1}) and (\ref{BAND2}), we have   
		     $$\langle|\nabla' u_{z_0}(\cdot,z)|\rangle'\lesssim \min \left\{\frac{1}{R}, \frac{R}{(z_0-z)^2}\right\}\langle|f(\cdot,z_0)|\rangle',$$
		     hence		     
                     \begin{eqnarray*}
			\langle|\nabla' u(\cdot,z)|\rangle'&\lesssim& \int_z^{\infty}\min \left\{\frac{1}{R},\frac{R}{(z_0-z)^2}\right\}\langle|f(\cdot,z_0)|\rangle'dz_0\\
			&\lesssim&\sup_{z_0\in (0,\infty)}\langle|f(\cdot,z_0)|\rangle'\int_z^{\infty}\min \left\{\frac{1}{R},\frac{R}{(z_0-z)^2}\right\}dz_0\\
			&\lesssim&\sup_{z_0\in (0,\infty)}\langle|f(\cdot,z_0)|\rangle',
	             \end{eqnarray*}	            
                     which, passing to the supremum in $z$, implies (\ref{V}).
                     \newline
                    From the above and applying Fubini's rule, we also have
                     \begin{align*}                       
			\int_{0}^{\infty}\langle|\nabla' u(\cdot,z)|\rangle' dz&\leq \int_0^{\infty} \int_z^{\infty}\min \left\{\frac{1}{R},\frac{R}{(z_0-z)^2}\right\}\langle|f(\cdot,z_0)|\rangle'dz_0 dz\numberthis\label{UNW1}\\
			&\leq\int_0^{\infty} \int_0^{z_0}\min \left\{\frac{1}{R},\frac{R}{(z_0-z)^2}\right\}dz\langle|f(\cdot,z_0)|\rangle'dz_0 \nonumber\\
			&\lesssim\int_0^{\infty}\langle|f(\cdot,z)|\rangle' dz \nonumber \,.\\
                     \end{align*}                     
		     \underline{Argument for (\ref{Z}):}
		     \newline
                      Let us consider $\chi_{2H\leq z\leq 4H}f$ and let $u_H$ be the solution to
                      $$(\partial_z-(-\Delta')^{\frac{1}{2}})u_{H}=\chi_{2H\leq z\leq 4H}f.$$
		    We claim
                     \begin{equation}\label{X}
			\sup_{z\leq H}\langle|\nabla' u_{H}|\rangle'\leq \int_0^{\infty}\langle|\chi_{2H\leq z\leq 4H}f|\rangle'\frac{dz}{z}
                     \end{equation}                    
		     and		     
		     \begin{equation}\label{XX}
			\int_H^{\infty}\langle|\nabla' u_{H}|\rangle'\frac{dz}{z}\leq \int_0^{\infty}\langle|\chi_{2H\leq z\leq 4H}f|\rangle'\frac{dz}{z}\,.
                     \end{equation}                          
                      From estimate (\ref{X}) and (\ref{XX}) the statement (\ref{Z}) easily follow.
                      Indeed, choosing $H=2^{n-1}$ and summing up over the dyadic intervals, we have
                      \begin{eqnarray*}
                      ||\nabla'u||&\leq& \sum_{n\in \Z}||\nabla'u_{2^{n-1}}||_{(0,\infty)}\\
                      &\leq&\sup_{z\leq 2^{n-1}}\langle|\nabla' u_{2^{n-1}}|\rangle'+\int_{2^{n-1}}^{\infty}\langle|\nabla' u_{2^{n-1}}|\rangle'\frac{dz}{z}\\
                      &\leq&\sum_{n\in \Z} \int_0^{\infty}\langle| \chi_{2^n\leq z\leq 2^{n+1}}f|\rangle'\frac{dz}{z}\\
                      &=&\int_0^{\infty}\langle|f|\rangle'\frac{dz}{z}\,.
                      \end{eqnarray*}                         
                        Argument for (\ref{X}):  
                                Fix $z\leq H$. Then, we have
                                  \begin{eqnarray*}
				      \langle|\nabla'u_H|\rangle'&\stackrel{(\ref{AA})}{\leq}&\int_z^{\infty}\frac{1}{(z_0-z)}\langle|\chi_{2H\leq z\leq 4H}f(\cdot,z_0)|\rangle' dz_0\\
 				      &\lesssim&\int_{2H}^{4H}\frac{1}{(z_0-z)}\langle|\chi_{2H\leq z\leq 4H}f(\cdot,z_0)|\rangle' dz_0\\ 
 				      &\lesssim& \frac{1}{H}\int_{2H}^{4H}\langle|\chi_{2H\leq z\leq 4H}f(\cdot,z_0)|\rangle' dz_0\\                                 
 				      &\leq&\int_{2H}^{\infty}\langle|\chi_{2H\leq z\leq 4H}f(\cdot,z_0)|\rangle' \frac{dz_0}{z_0}\\
 				      &\leq&\int_{0}^{\infty}\langle|\chi_{2H\leq z\leq 4H}f(\cdot,z_0)|\rangle' \frac{dz_0}{z_0}\,.\\
                                  \end{eqnarray*}                                
                                  Taking the supremum over all $z$ proves (\ref{X}). 
                                  \newline
                                 Argument for (\ref{XX}):
                                  For $z\geq H$ we have
                                 \begin{eqnarray*}
                                  \int_H^{\infty}\langle|\nabla'u_H|\rangle'\frac{dz}{z}&\lesssim& \frac{1}{H}\int_0^{\infty}\langle|\nabla'u_H|\rangle' dz\\
                                  &\stackrel{(\ref{UNW1})}{\lesssim}&\frac{1}{H}\int_0^{\infty}\langle|\chi_{2H\leq z\leq 4H}f|\rangle' dz\\
                                  &=&\frac{1}{H}\int_{2H}^{4H}\langle|\chi_{2H\leq z\leq 4H}f|\rangle' dz\\
                                  &\lesssim& \int_0^{\infty}\langle|\chi_{2H\leq z\leq 4H}f|\rangle'\frac{dz}{z}\,.\\
                                 \end{eqnarray*}
\end{proof}

\subsection{Proof of Lemma \ref{lemma2}}

\begin{proof}[Proof of Lemma \ref{lemma2}]\ \\		      
                     Let us first assume $g=0$. It is enough to show            	             
	             \begin{equation}\label{SupEst}
	             \sup_z\langle|\nabla'u|\rangle'\lesssim \sup_{z}\langle|f|\rangle'
	             \end{equation}	             
	             and	             
	             \begin{equation}\label{WeiEst}
	             \int_0^{\infty}\langle|\nabla'u|\rangle'\frac{dz}{z}\lesssim \int_{0}^{\infty}\langle|f|\rangle'\frac{dz}{z}\,.
	             \end{equation}	             
	             Recall that by Duhamel's principle we have the following representation
                     \begin{equation}\label{Du}
                      u(z)=\int_0^z u_{z_0}(\cdot,z)dz_0,
                     \end{equation}
                     where $u_{z_0}$  is the harmonic extension of $f(z_0)$ onto $\{z>z_0\}$, i.e it solves the boundary value problem         
                      \begin{equation}\label{FraForDu}
                        \left\{\begin{array}{rclc}
			  (\partial_z+(-\Delta')^{\frac{1}{2}}) u_{z_0}&=&0 \qquad & {\rm for } \quad z>z_0\,,\\
			   u_{z_0}&=&f \qquad & {\rm for } \quad z=z_0\,.\\
		           \end{array}\right.        
                      \end{equation}                   
                    From the Poisson's kernel representation we learn that 
                    $$\langle|\nabla' u_{z_0}(\cdot,z)|\rangle'\lesssim \begin{cases}
                                                                  \langle|\nabla' f(\cdot,z_0)|\rangle'\,,\\
                                                                  \frac{1}{(z-z_0)^2}\langle|\nabla'(-\Delta')^{-1}f(\cdot,z_0)|\rangle'\,.
                                                                  \end{cases}$$
                    Using the bandedness assumption in the form of (\ref{BAND1}) and (\ref{BAND2}) 
                    $$\langle|\nabla' u_{z_0}(\cdot,z)|\rangle'\lesssim \min\{\frac{1}{R},\frac{R}{(z-z_0)^2}\}\langle| f(\cdot,z_0)|\rangle'$$
                    and observing  (\ref{Du}), we obtain 
                    \begin{equation}\label{AUX} 
                    \begin{array}{rclc}     
                    \langle|\nabla' u(\cdot,z)|\rangle'                               
		      &\lesssim& \int_0^{z}\min\left\{\frac{1}{R},\frac{R}{(z-z_0)^2}\right\}\langle| f(\cdot,z_0)|\rangle' dz_0\\
		       &\leq&\sup_{z_0}\langle| f(\cdot,z_0)|\rangle'\int_0^{z}\min\left\{\frac{1}{R},\frac{R}{(z-z_0)^2}\right\} dz_0\\
		       &\lesssim&\sup_{z_0}\langle| f(\cdot,z_0)|\rangle'\,. 
                    \end{array}    
                    \end{equation}                                                       
                    Estimate (\ref{SupEst}) follows from (\ref{AUX}) by passing to the the supremum in $z$.     
                    \newline
                    From the above (\ref{AUX}), multiplying by the weight $\frac{1}{z}$ and observing that $z>z_0$ we have                    
                   \begin{equation}\label{AUX2}
		      \langle|\nabla' u(\cdot,z)|\rangle'\frac{1}{z}\lesssim \int_0^{z}\min\left\{\frac{1}{R},\frac{R}{(z-z_0)^2}\right\}\langle| f(\cdot,z_0)|\rangle'\frac{dz_0}{z_0}\,.
                    \end{equation}                   
                    After integrating in $z\in (0,\infty)$ and applying Young's estimate  we get (\ref{WeiEst}).                                      
		     \vspace{1cm}
		     
                     Let's assume now the general case, with $g\neq 0$. We want to prove (\ref{3}). 
                     Recall that by definition $\tilde g(x',z):=g(x')$ and consider $u-\tilde g$. 
                     By construction it satisfies                                                 
                     \begin{equation*}
                        \left\{\begin{array}{rclc}
			  (\partial_z+(-\Delta')^{-\frac{1}{2}})(u-\tilde g)&=&f-(-\Delta')^{-\frac{1}{2}}g  \qquad & {\rm for } \quad z>0\,,\\
			  u-\tilde g&=&0   \qquad & {\rm for } \quad z=0 \,.\\ 
			     \end{array}\right.        
                      \end{equation*}                 
                    Using the first part of the proof of (\ref{2}) and triangle inequality, we have                  
                    $$||\nabla u||_{(0,\infty)}\lesssim ||\nabla \tilde g||_{(0,\infty)}+||f||_{(0,\infty)}+||(-\Delta')^{\frac{1}{2}}\tilde g||_{(0,\infty)}\,.$$                    
                    Therefore by the bandedness assumption in the form of (\ref{Q}) we can conclude (\ref{3}).                   
  \end{proof}   
                 
 \subsection{Proof of Lemma \ref{lemma3}}

\begin{proof}[Proof of Lemma \ref{lemma3}]\ \\         
		  We will show that, for the non-homogeneous heat 
		  equation with Dirichlet boundary condition   
		    \begin{equation}\label{A1}
                        \left\{\begin{array}{rclc}
			(\partial_t-\Delta)u&=&f  \qquad & {\rm for } \quad   z>0\,,\\
			u&=&0  \qquad & {\rm for } \quad    z=0\,,\\
			u&=&0  \qquad & {\rm for } \quad    t=0\,,\\
		     	 \end{array}\right.        
                      \end{equation}   
                      we have the following estimates
                    \begin{equation}\label{A1.1}
		     \int\left(\langle|(\partial_t-\partial_z^2)u(\cdot,z,\cdot)|\rangle+\langle|\nabla'^2 u(\cdot,z,\cdot)|\rangle\right)\frac{dz}{z}\lesssim \int\langle|f(\cdot,z,\cdot)|\rangle \frac{dz}{z}\,,
		    \end{equation}             
		   \begin{equation}\label{A1.2}
		    \langle|\nabla'\partial_z u(\cdot,z,\cdot)|_{z=0}\rangle\lesssim \int\langle|f(\cdot,z,\cdot)|\rangle \frac{dz}{z}\,,
		   \end{equation} 
		   \begin{equation}\label{A1.3}
		    \sup_{z}\langle|\nabla'^2 u(\cdot,z,\cdot)|\rangle\lesssim \sup_{z}\langle|f(\cdot,z,\cdot)|\rangle \,,
		   \end{equation}               
		   \begin{equation}\label{A1.4}
		    \sup_{z}\langle|\nabla'\partial_z u(\cdot,z,\cdot)|\rangle\lesssim \sup_{z}\langle|f(\cdot,z,\cdot)|\rangle \,.
		   \end{equation} 
		   In order to bound the off-diagonal components 
		   of the Hessian, we consider the 
		   decomposition
		   \begin{equation}\label{SPLIT}
		    u=u_N+u_C,
		   \end{equation} 
		 where $u_N$ solves   
		  \begin{equation}\label{A2}
                        \left\{\begin{array}{rclc}
		        (\partial_t-\Delta)u_N &=&f \qquad & {\rm for } \quad   z>0\,,\\
		        \partial_z u_N &=& 0 \qquad & {\rm for } \quad    z=0\,,\\
		          u_N &=& 0 \qquad & {\rm for } \quad    t=0\,,\\
		        \end{array}\right.        
                   \end{equation}
                     and $u_C$ solves
		   \begin{equation}\label{A3}
                        \left\{\begin{array}{rclc}
		        (\partial_t-\Delta)u_C&=&0   \qquad & {\rm for } \quad   z>0\,,\\
		        \partial_z u_C &=&\partial_z u  \qquad & {\rm for } \quad    z=0\,,\\
		        u_C &=&0  \qquad & {\rm for } \quad    t=0\,.\\
		        \end{array}\right.        
                   \end{equation}                       		       
		The splitting (\ref{SPLIT}) is valid by the uniqueness of the Neumann problem.
                For the auxiliary problems  (\ref{A2}) and (\ref{A3}) we have  the following bounds      		               
		\begin{equation}\label{A2.1}
		  \int\langle|\nabla'\partial_z u_N(\cdot,z,\cdot)|\rangle \frac{dz}{z}\lesssim \int\langle|f(\cdot,z,\cdot)|\rangle \frac{dz}{z}\,,
		\end{equation}       
       		\begin{equation}\label{A3.1}
		  \sup_z\langle|\nabla'\partial_z u_C(\cdot,z,\cdot)|\rangle\lesssim \langle|\nabla'\partial_z u(\cdot,z,\cdot)|_{z=0}\rangle \,.
		\end{equation}       
		We claim that estimates (\ref{A1.1}), (\ref{A1.2}),(\ref{A1.3}), (\ref{A1.4}), (\ref{A2.1}) and (\ref{A3.1})  yield (\ref{4}). 
		 \newline
		Let us first consider the bound for $\nabla'^2$. 
		Consider $u=u_1+u_2$, where $u_1$ and $u_2$ satisfy (\ref{A1}) with right hand side $f_1$ and $f_2$ respectively. 
		 We have	 
		\begin{eqnarray*}
		||\nabla'^2 u||_{(0,\infty)}&\lesssim &
		\sup_z\langle|\nabla'^2u_{1}|\rangle+\int\langle|\nabla'^2 u_{2}|\rangle\frac{dz}{z}\\
		& \stackrel{ (\ref{A1.1}) \& (\ref{A1.3})}{\lesssim} & \sup_z\langle|f_1|\rangle+\int\langle|f_2|\rangle\frac{dz}{z},
		\end{eqnarray*}		
		which implies, upon taking infimum over all decompositions $f=f_1+f_2$
		\begin{equation}\label{AAA}
		 ||\nabla'^2 u||_{(0,\infty)}\lesssim ||f||_{(0,\infty)}.
		\end{equation}      
		 We now consider a further decomposition of $u_2$ , i.e $u_2=u_{2C}+u_{2N}$ where $u_{2C}$ satisfies (\ref{A3}) and $u_{2N}$ satisfies (\ref{A2}).
		 Therefore $u=u_1+u_{2C}+u_{2N}$ and we can bound the off-diagonal components of the Hessian 
                \begin{eqnarray*}
		||\nabla'\partial_z u||_{(0,\infty)}& \lesssim &
		 \sup_z\langle|\nabla'\partial_z u_1|\rangle+\sup_z\langle|\nabla'\partial_zu_{2C}|\rangle+\int\langle|\nabla'\partial_z u_{2N}|\rangle\frac{dz}{z}\\
		& \stackrel{(\ref{A1.2}),(\ref{A3.1}),(\ref{A2.1}) \& (\ref{A1.4})}{\lesssim} & \sup_z\langle|f_1|\rangle+\int\langle|f_2|\rangle\frac{dz}{z}\,.
		\end{eqnarray*}		
		From the last inequality, passing to the infimum over all the possible decompositions of $f$ we get
		\begin{equation}\label{BBB}
		||\nabla'\partial_z u||_{(0,\infty)}\lesssim ||f||_{(0,\infty)}.
		\end{equation}         		
		On one hand estimate (\ref{AAA}) and (\ref{BBB}) imply 
		$$||\nabla\nabla' u||_{(0,\infty)}\lesssim||\nabla'^2 u||_{(0,\infty)}+||\nabla'\partial_z u||_{(0,\infty)}\,,$$   		
		on the other hand equation (\ref{III}) and  estimate (\ref{AAA}) yield         
		$$||(\partial_t-\partial_z^2) u||_{(0,\infty)}\lesssim ||f||_{(0,\infty)}\,.$$		               
		   \underline{Argument for \ref{A1.1}}
		   \newline
		   Let $u$ be a solution of problem of (\ref{A1}).
                   Keeping in mind Remark (\ref{TIME}) it is enough to show              
                   $$\int_0^{\infty}\int_0^{\infty}\langle|\nabla'^2u|\rangle'\frac{dz}{z}dt\lesssim \int_{0}^{\infty}\int_0^{\infty}\langle|f|\rangle'\frac{dz}{z}dt \,.$$    
		   By the Duhamel's principle we have     
		   \begin{equation}\label{Duhamel1}
                    u(x',z,t)=\int_{s=0}^{t} u_{s}(x',z,t)ds ,
                   \end{equation}
                   where $u_{s}$ is the solution to the homogeneous, initial value problem		   		  
	           \begin{equation}\label{Du-eq}	        
                   \left\{\begin{array}{rclc}
		   (\partial_t-\Delta)u_s&=&0  \qquad & {\rm for } \quad  z>0, t>s\,,\\
		    u_s&=&0  \qquad & {\rm for } \quad  z=0, t>s\,,\\
		    u_s&=&f  \qquad & {\rm for } \quad  z>0, t=s\,.\\
		    \end{array}\right.        
                   \end{equation}                                       
		 Extending $u$ and $f$ to the whole space by odd reflection 
		 \footnote{with abuse of notation we will call again $u$ and $f$ these extensions.},
		 we are left to study the problem       
		 \begin{equation*}
                 \left\{\begin{array}{rclc}
		 (\partial_t-\Delta)u_s&=&0 \qquad & {\rm for } \quad z\in \R, t>s\,,\\
		 u_s&=&f  \qquad & {\rm for } \quad z\in \R,  t=s\,,\\
	         \end{array}\right.        
                 \end{equation*}                    
		 the solution of which can be represented via heat kernel as     
                  \begin{equation}\label{representation}
		 \begin{array}{rclc}
		 u_s(x',z,t)&=&\int_{\R}\Gamma(\cdot,z-\tilde z, t-s)\ast_{x'}f(\cdot,\tilde z,s)d\tilde z\\
		 &=&\int_{0}^{\infty}\left[\Gamma(\cdot,z-\tilde z,t-s)-\Gamma ( \cdot,z+\tilde z,t-s)\right]\ast_{x'}f(\cdot,\tilde z,s)d\tilde z\,.\\
		 \end{array} 
                  \end{equation} 
		 The application of  $\nabla'^2$ to the representation above yields 
		 \begin{eqnarray*}
                  &&\nabla'^2u_s(x',z,t)\\
 		 &=&\footnotesize{\begin{cases}
 		     \int_{0}^{\infty}\int_{\R^{d-1}}\nabla'\Gamma_{d-1}(x'-\tilde{x'},t-s)\left(\Gamma_1(z-\tilde z,t-s)-\Gamma_1( z+\tilde z,t-s)\right)\nabla'f(\tilde{x'},\tilde z,s)d\tilde{x'}d\tilde z\,,\\
 		     \int_{0}^{\infty}\int_{\R^{d-1}}\nabla'^3\Gamma_{d-1}(x'-\tilde{x'},t-s)\left(\Gamma_1(z-\tilde z,t-s)-\Gamma_1 (z+\tilde z,t-s)\right)(-\Delta')^{-1}\nabla'f(\tilde{x'},\tilde z,s)d\tilde{x'}d\tilde z\,.\\
 		    \end{cases}}
 		    \end{eqnarray*}    		
 		Averaging in the horizontal direction we obtain, on the one hand
%
		{\footnotesize{
		 \begin{eqnarray*}
                  &&\langle|\nabla'^2u_s(\cdot,z,t)|\rangle'\\
                  &\lesssim&\int_{0}^{\infty}\langle|\nabla'\Gamma_{d-1} (\cdot,t-s)|\rangle'|\Gamma_1(z-\tilde z,t-s)-\Gamma_1 ( z+\tilde z,t-s)|\langle|\nabla'f(\cdot,\tilde z,s)|\rangle' d\tilde z\\
                  &\stackrel{(\ref{z0})\&(\ref{BAND2})}{\lesssim}&\int_{0}^{\infty}\frac{1}{(t-s)^{\frac{1}{2}}}|\Gamma_1(z-\tilde z,t-s)-\Gamma_1 ( z+\tilde z,t-s)|\frac{1}{R}\langle|f(\cdot,\tilde z,s)|\rangle' d\tilde z\\
		 \end{eqnarray*}}}
		 and, on the other hand
		  {\footnotesize{
		 \begin{eqnarray*}
                  &&\langle|\nabla'^2u_s(\cdot,z,t)|\rangle'\\
                   &\lesssim& \int_{0}^{\infty}\langle|\nabla'^3\Gamma_{d-1} (\cdot,t-s)\rangle'|\Gamma_1(z-\tilde z,t-s)-\Gamma_1( z+\tilde z,t-s)|\langle|(-\Delta')^{-1}\nabla'f(\cdot,\tilde z,s)|\rangle'd\tilde z\,\\
                   &\stackrel{(\ref{z0})\&(\ref{BAND1})}{\lesssim}&\int_{0}^{\infty}|\Gamma_1(z-\tilde z,t-s)-\Gamma_1( z+\tilde z,t-s)|\frac{1}{(t-s)^{\frac{3}{2}}} R\langle|f(\cdot,\tilde z,s)|\rangle'd\tilde z\,.
		 \end{eqnarray*}}}		 
		 Multiplying by the weight $\frac{1}{z}$ and integrating in $z\in(0,\infty)$ we get
		 \begin{equation*}
		\int_0^{\infty}\langle|\nabla'^2u_s(\cdot,t)|\rangle'\frac{dz}{z}\lesssim \left(\sup_{\tilde z}\int_0^{\infty}K_{t-s}(z,\tilde z) dz\right)
		 \footnotesize{\begin{cases}
		\frac{1}{(t-s)^{\frac{1}{2}}}\frac{1}{R}\int_{0}^{\infty}\langle|f(\cdot,\tilde z,s)|\rangle' \frac{d\tilde z}{\tilde z}\,,\\
		\frac{R}{(t-s)^{\frac{3}{2}}}\int_{0}^{\infty} \langle |f(x',\tilde z,s)|\rangle'\frac{d\tilde z}{\tilde z}\,,
		\end{cases}}
		\end{equation*}    
		where we called $K_{t-s}(z,\tilde z)=\frac{\tilde z}{z}|\Gamma_1(z-\tilde z,t-s)-\Gamma_1( z+\tilde z,t-s)|$.
		\newline
		 From Lemma \ref{Lemma5} we infer 
		 $$\sup_{\tilde z}\int_0^{\infty}K_{t-s}(z,\tilde z) dz\stackrel{(\ref{EE1})}{\lesssim} \int_{\R}|\Gamma_1(z,t-s)|dz+\sup_{z\in \R}(z^2|\partial_z\Gamma_1(z,t-s)|)\stackrel{(\ref{x1})\&(\ref{y3})}{\lesssim}1$$
                and therefore we have
		\begin{equation*}
		\int_0^{\infty}\langle|\nabla'^2u_s(\cdot,z,t)|\rangle'\frac{dz}{z}\lesssim
		 \footnotesize{\begin{cases}
		\frac{1}{(t-s)^{\frac{1}{2}}}\frac{1}{R}\int_{0}^{\infty}\langle|f(\cdot,\tilde z,s)|\rangle' \frac{d\tilde z}{\tilde z}\,,\\
		\frac{1}{(t-s)^{\frac{3}{2}}} R \int_{0}^{\infty}\langle |f(\cdot,\tilde z,s)|\rangle'\frac{d\tilde z}{\tilde z}\,.
		\end{cases}}
		\end{equation*}    
		Finally, inserting the previous estimate into the Duhamel formula (\ref{Duhamel1} )and integrating in time we get
               \begin{eqnarray}
               &&\int_0^{\infty}\langle|\nabla'^2 u(\cdot,z,t)|\rangle'\frac{dz}{z} dt \notag \\
               &\stackrel{\ref{Duhamel1}}{\lesssim}&\int_0^{\infty}\int_0^t\langle|\nabla'^2 u_s(\cdot,z,t)|\rangle' \frac{dz}{z} ds dt\notag\\
               &\lesssim&\int_0^{\infty}\int_s^{\infty} \min\{\frac{1}{R(t-s)^{\frac{1}{2}}},\frac{R}{(t-s)^{\frac{3}{2}}}\}\int_0^{\infty}\langle|f(\cdot,\tilde z,s)|\rangle'\frac{d\tilde z}{\tilde z} dt ds\notag\\
               &\lesssim&\int_0^{\infty}\int_s^{\infty}\min\{\frac{1}{R(t-s)^{\frac{1}{2}}},\frac{R}{(t-s)^{\frac{3}{2}}}\}dt\int_0^{\infty}\langle|f(\cdot,\tilde z,s)|\rangle'\frac{d\tilde z}{\tilde z} ds \label{Es1}\\
               &\lesssim&\int_0^{\infty}\int_0^{\infty}\min\{\frac{1}{R\tau^{\frac{1}{2}}},\frac{R}{\tau^{\frac{3}{2}}}\}d\tau\int_0^{\infty}\langle|f(\cdot,\tilde z,s)|\rangle'\frac{d\tilde z}{\tilde z} ds\label{Es2},\\
               &\lesssim&\int_0^{\infty}\int_0^{\infty}\langle|f(\cdot,\tilde z,s)|\rangle'\frac{d\tilde z}{\tilde z} ds,\notag
               \end{eqnarray}
               where in the second to last inequality we used 
                \begin{equation}\label{MIN}
                \int_0^{\infty}\min\left\{\frac{1}{R\tau^{\frac{1}{2}}},\frac{R}{\tau^{\frac{3}{2}}}\right\}d\tau\lesssim 1\,.
                \end{equation}                 
		 \underline{Argument for \ref{A1.2}:}  
		 \newline
		     Let $u$ be a solution of problem of (\ref{A1}). Recall that we need to prove
		  
		   \begin{equation}\label{BO1}
		      \int_0^{\infty}\langle|\nabla'\partial_z u|_{z=0}(\cdot,z,t)|\rangle' dt\lesssim \int_0^{\infty}\int_0^{\infty}\langle|f(\cdot,z,t)|\rangle dt\frac{dz}{z}\,.
		   \end{equation}    		   
                  The solution of the equation (\ref{Du-eq}) extended to the whole space by odd reflection can be represented by (\ref{representation}) (see argument for (\ref{A1.1})). Therefore
		  \begin{eqnarray*}		
		  &&\nabla'\partial_zu_s(x',z,t)|_{z=0}\\  
		  &=&\begin{cases}
		  -2\int_{\R^{d-1}}\int_0^{\infty} \Gamma_{d-1}(x'-\tilde{x'},t-s)\partial_z\Gamma_1(\tilde z,t-s)\nabla'f(\tilde {x'},\tilde{z},s)d\tilde{x'}d\tilde z\,,\\
		  -2\int_{\R^{d-1}}\int_0^{\infty}\nabla'\Gamma_{d-1}(x'-\tilde{x'},t-s) \partial_z\Gamma_1(\tilde z,t-s)\nabla'(-\Delta')^{-1}\nabla'f(\tilde{x'},\tilde{z},s) d\tilde{x'}d\tilde z\,.
		  \end{cases}
		  \end{eqnarray*}          
		  Taking the horizontal average we get, on the one hand
		  \begin{eqnarray*}
		    &&\langle|\nabla'\partial_zu_s(\cdot,z,t)|_{z=0}|\rangle'\\
		    &\lesssim& \int_0^{\infty}\langle|\Gamma_{d-1}(\cdot,t-s)|\rangle'|\partial_z \Gamma_1(\tilde z,t-s)|\langle|\nabla'f(\cdot,\tilde z,s)|\rangle' d\tilde z\\
		    &\stackrel{(\ref{z0})}{\lesssim}&\int_0^{\infty}|\partial_z \Gamma_1(\tilde z,t-s)|\langle|\nabla' f(\cdot,\tilde z,s)|\rangle'd\tilde z\\
		    &\stackrel{(\ref{BAND2})}{\lesssim}&\frac{1}{R}\int_0^{\infty}|\partial_z \Gamma_1(\tilde z,t-s)|\langle|f(\cdot,\tilde z,s)|\rangle'd\tilde z\\
		    &\lesssim&\frac{1}{R}\sup_{\tilde z}|\tilde z\partial_z\Gamma_1(\tilde z,t-s)|\int_0^{\infty}\langle|f(\cdot,\tilde{z},s)|\rangle'\frac{d\tilde z}{\tilde z}
		  \end{eqnarray*}          
		  and on the other hand        
		  \begin{eqnarray*}
		    &&\langle|\nabla'\partial_zu_s(\cdot,z,t)|_{z=0}|\rangle'\\
		    &\lesssim& \int_0^{\infty}\langle|(\nabla')^2\Gamma_{d-1}(\cdot,t-s)|\rangle'|\partial_z \Gamma_1(\tilde z,t-s)|\langle|(-\Delta')^{-1}\nabla'f(\cdot,\tilde z,s)|\rangle' d\tilde z\\
		    &\stackrel{(\ref{z0})}{\lesssim}&\frac{1}{(t-s)}\int_0^{\infty}|\partial_z \Gamma_1(\tilde z,t-s)|\langle|(-\Delta')^{-1}\nabla' f(\cdot,\tilde z,s)|\rangle'd\tilde z\\
		    &\stackrel{(\ref{BAND1})}{\lesssim}&\frac{R}{(t-s)}\int_0^{\infty}|\partial_z \Gamma_1(\tilde z,t-s)|\langle|f(\cdot,\tilde z,s)|\rangle'd\tilde z\\
		    &\lesssim&\frac{R}{(t-s)}\sup_{\tilde z}|\tilde z\partial_z\Gamma_1(\tilde z,t-s)|\int_0^{\infty}\langle|f(\cdot,\tilde{z},s)|\rangle'\frac{d\tilde z}{\tilde z}\,.
		  \end{eqnarray*}                 
		Using the estimate (\ref{y2}) we get
		\begin{equation*}
		 \langle|\nabla'\partial_zu_s(x',z,t)|_{z=0}|\rangle' \lesssim \begin{cases}
		                                                                \frac{1}{(t-s)^{1/2}R}\int_0^{\infty}\langle|f(\cdot,\tilde z,s)|\rangle'\frac{d\tilde z}{\tilde z}\,,\\
		                                                                \frac{R}{(t-s)^{3/2}}\int_0^{\infty}\langle|f(\cdot,\tilde z,s)|\rangle'\frac{d\tilde z}{\tilde z}\,.
		                                                               \end{cases}
		                                                               \end{equation*} 		                                                                     
               Finally,  inserting into Duhamel's formula and integrating in time we have 
               \begin{eqnarray*}
               &&\int_0^{\infty}\langle|\nabla'\partial_z u(\cdot,z,t)|_{z=0}\rangle' dt \\
               &\stackrel{(\ref{Duhamel1})}{\lesssim}&\int_0^{\infty}\int_0^t\langle|\nabla'\partial_z u_s(\cdot,z,t)|_{z=0}\rangle' ds dt\\
               &\lesssim& \int_0^{\infty}\int_s^{\infty}\min\{\frac{1}{R(t-s)^{\frac{1}{2}}},\frac{R}{(t-s)^{\frac{3}{2}}}\}\int_0^{\infty}\langle|f(\cdot,\tilde z,s)|\rangle'\frac{d\tilde z}{\tilde z} dt ds\\
               &\stackrel{(\ref{Es1})\& (\ref{Es2})}{\lesssim}&\int_0^{\infty}\int_0^{\infty}\langle|f(x',z,s)|\rangle'\frac{d\tilde z}{\tilde z} ds.
               \end{eqnarray*}          
                 \underline{Argument for (\ref{A1.3}):}
                 \newline
                  Let $u$ be the solution of problem (\ref{A1}). 
                  We recall that we want to prove               
                  \begin{equation}\label{SUP}
                  \sup_z\int_0^{\infty}\langle|\nabla'^2u(\cdot,z,t)|\rangle'dt\lesssim \sup_z\int_{0}^{\infty}\langle|f(\cdot,z,t)|\rangle'dt \,.
                  \end{equation}   		   		  
                  The solution of equation (\ref{Du-eq}) extended to the whole space  can be represented by (\ref{representation})
                  (see argument for (\ref{A1.1})). Therefore
                  applying $\nabla'^2$ to (\ref{representation})and considering the horizontal average we have, on the one hand 
                \begin{eqnarray*}
 		 &&\langle|\nabla'^2u_s(\cdot,z,t)|\rangle'\\
 		&\lesssim& \int_{\R}\langle|\nabla'\Gamma_{d-1}(\cdot,t-s)|\rangle'|\Gamma_1(z-\tilde z,t-s)|\langle|\nabla'f(\cdot,\tilde{z},s)|\rangle' d\tilde z\\
                &\stackrel{(\ref{z0})\& (\ref{BAND2})}{\lesssim}& \int_{\R}\frac{1}{(t-s)^{\frac{1}{2}}}|\Gamma_1(z-\tilde z,t-s)|\frac{1}{R}\langle|f(\cdot,\tilde{z},s)|\rangle' d\tilde z
                \end{eqnarray*}                
                and on the other hand 
                 \begin{eqnarray*}
 		 &&\langle|\nabla'^2u_s(\cdot,z,t)|\rangle'\\
 		 &\lesssim&\int_{\R}\langle|\nabla'^3\Gamma_{d-1}(\cdot,t-s)|\rangle'|\Gamma_1(z-\tilde z,t-s)|\langle|(-\Delta')^{-1}\nabla'f(\cdot,\tilde{z},s)|\rangle' d\tilde z\,\\ 
 		 &\stackrel{(\ref{z0}) \& (\ref{BAND1})}{\lesssim} &\int_{\R}\frac{1}{(t-s)^{\frac{3}{2}}}|\Gamma_1(z-\tilde z,t-s)|R\langle|f(\cdot,\tilde{z},s)|\rangle' d\tilde z\,.
		 \end{eqnarray*}                 
                  Inserting the above estimates in the Duhamel's formula (\ref{Duhamel1}), we have                 
                 {\footnotesize{\begin{eqnarray*}
                 &&\int_0^{\infty}\int_0^t \langle|\nabla'^2u_s(z,\cdot)|\rangle'dsdt\\
                 &\lesssim& \int_0^{\infty}\int_s^{\infty}\min\left\{\frac{1}{R(t-s)^{\frac{1}{2}}},\frac{R}{(t-s)^{\frac{3}{2}}}\right\}\int_{\R}|\Gamma_1(z-\tilde{z},t-s)|\langle|f(\cdot,\tilde{z},s)|\rangle'd\tilde z dsdt\\
                 &\lesssim&\int_{\R}\left(\int_0^{\infty}\min\left\{\frac{1}{R \tau^{\frac{1}{2}}},\frac{R}{\tau^{\frac{3}{2}}}\right\}|\Gamma_1(z-\tilde{z},\tau)|d\tau\right)\int_0^{\infty}\langle|f(\cdot,\tilde{z},s)|\rangle'dsd\tilde z\\
		 &\lesssim& \sup_{\tilde{z}}\int_0^{\infty}\langle|f(\cdot,\tilde{z},s)|\rangle'ds\int_{\R}\int_0^{\infty}\min\left\{\frac{1}{R \tau^{\frac{1}{2}}},\frac{R}{\tau^{\frac{3}{2}}}\right\}|\Gamma_1(z-\tilde{z},\tau)|d\tau d\tilde z\\
		 &\stackrel{(\ref{x1})}{\lesssim}&\sup_{\tilde{z}}\int_0^{\infty}\langle|f(\cdot,\tilde{z},s)|\rangle'ds\int_0^{\infty}\min\left\{\frac{1}{R \tau^{\frac{1}{2}}},\frac{R}{\tau^{\frac{3}{2}}}\right\}d\tau\int_{\R}|\Gamma_1(z-\tilde{z},\tau)|d\tilde z\\
		 &\stackrel{(\ref{MIN})}{\lesssim}&\sup_{\tilde{z}}\int_0^{\infty}\langle|f(\cdot,\tilde{z},s)|\rangle'ds\,.
                 \end{eqnarray*}}}
                 Taking the supremum in $z$ we obtain the desired estimate.          
                 \newline       
                  \underline{Argument for (\ref{A1.4}):}
                  \newline
                  Let $u$ be the solution of problem (\ref{A1}). 
                   We claim               
                   \begin{equation}\label{SUP2}
                   \sup_z\int_0^{\infty}\langle|\nabla'\partial_zu|\rangle'dt\lesssim \sup_z\int_{0}^{\infty}\langle|f|\rangle'dt \,.
                   \end{equation}   		 
		  The solution of the equation (\ref{Du-eq}) extended to the whole space  can be represented by (see argument for (\ref{A1.1}))      
		 \begin{align*}
		  u_{s}(x',z,t)&=\int_{\R}\Gamma(\cdot,z-\tilde z, t-s)\ast_{x'}f(\cdot,\tilde z,s)d\tilde z\,.
		 \end{align*}          
		 Applying $\nabla'\partial_z$ and considering the horizontal average we obtain, on the one hand 		 
                 \begin{eqnarray*}
 		 &&\langle|\nabla'\partial_z u_s(\cdot,z,t)|\rangle'\\ 		 
 		 &\lesssim&\int_{\R}\langle|\Gamma_{d-1}(\cdot,t-s)|\rangle'|\partial_z\Gamma_1(z-\tilde z,t-s)|\langle|\nabla'f(\cdot,\tilde{z},s)|\rangle' d\tilde z\\
 		 &\stackrel{\ref{BAND2}}{\lesssim}&\int_{\R}|\partial_z\Gamma_1(z-\tilde z,t-s)|\frac{1}{R}\langle|f(\cdot,\tilde{z},s)|\rangle' d\tilde z\\
 		 \end{eqnarray*}
 		 and, on the other hand 
 		 \begin{eqnarray*}
 		 &&\langle|\nabla'\partial_z u_s(\cdot,z,t)|\rangle'\\ 		 
 		 &\lesssim&\int_{\R}\langle|\nabla'^2\Gamma_{d-1}(\cdot,t-s)|\rangle'|\partial_z\Gamma_1(z-\tilde z,t-s)|\langle|(-\Delta')^{-1}\nabla'f(\cdot,\tilde{z},s)|\rangle' d\tilde z\\  
 		 &\stackrel{\ref{BAND1}}{\lesssim}&\int_{\R}\frac{1}{(t-s)}|\partial_z\Gamma_1(z-\tilde z,t-s)|R\langle|f(\cdot,\tilde{z},s)|\rangle' d\tilde z\,.\\  
 		 \end{eqnarray*}		 
                 Inserting the above estimates in the Duhamel's formula (\ref{Duhamel1}), we have                 
                 {\footnotesize{\begin{eqnarray*}
                 &&\int_0^{\infty}\int_0^t \langle|\nabla'\partial_z u_s(z,\cdot)|\rangle'dsdt\\
                 &\lesssim& \int_0^{\infty}\int_s^{\infty}\min\left\{\frac{1}{R},\frac{R}{(t-s)}\right\}\int_{\R}|\partial_z\Gamma_1(z-\tilde{z},t-s)|\langle|f(\cdot,\tilde{z},s)|\rangle'd\tilde z dtds\\
                 &\lesssim&\int_{\R}\left(\int_0^{\infty}\min\left\{\frac{1}{R},\frac{R}{\tau}\right\}|\partial_z\Gamma_1(z-\tilde{z},\tau)|d\tau\right)\int_0^{\infty}\langle|f(\cdot,\tilde{z},s)|\rangle'dsd\tilde z\\
		 &\lesssim& \sup_{\tilde{z}}\int_0^{\infty}\langle|f(\cdot,\tilde{z},s)|\rangle'ds\int_{\R}\int_0^{\infty}\min\left\{\frac{1}{R },\frac{R}{\tau}\right\}|\partial_z\Gamma_1(z-\tilde{z},\tau)|d\tau d\tilde z\\
		 &\stackrel{(\ref{x1})}{\lesssim}&\sup_{\tilde{z}}\int_0^{\infty}\langle|f(\cdot,\tilde{z},s)|\rangle'ds\int_0^{\infty}\min\left\{\frac{1}{R \tau^{\frac{1}{2}}},\frac{R}{\tau^{\frac{3}{2}}}\right\}d\tau\\
		 &\stackrel{(\ref{MIN})}{\lesssim}&\sup_{\tilde{z}}\int_0^{\infty}\langle|f(\cdot,\tilde{z},s)|\rangle'ds\,.
                 \end{eqnarray*}}}
                 Taking the supremum in $z$ we obtain the desired estimate. 
                 \newline           
		\underline{Argument for (\ref{A2.1})}
                 \newline
		 We recall that we want to show                
                   $$\int_0^{\infty}\int_0^{\infty}\langle|\nabla'\partial_zu_N|\rangle'\frac{dz}{z}dt\lesssim \int_{0}^{\infty}\int_0^{\infty}\langle|f|\rangle'\frac{dz}{z}dt ,$$    
		   where $u_N$ be the solution to the non-homogeneous heat equation with Neumann boundary conditions (\ref{A2}).
		   By the Duhamel's principle we have     
		    $$u_N(x',z,t)=\int_{s=0}^{t} u_{N_s}(x',z,t)ds, $$    
		    where  $u_{N_s}$ is solution to            
	          \begin{equation*}
                  \left\{\begin{array}{rclc}
		  (\partial_t-\Delta)u_{N_s}&=&0 \qquad & {\rm for } \quad  z>0, t>s\,,\\
		  \partial_zu_{N_s}&=&0  \qquad & {\rm for } \quad z=0, t>s\,,\\
		  u_{N_s}&=&f  \qquad & {\rm for } \quad z>0, t=s\,,\\
		  \end{array}\right.        
                  \end{equation*}                    
                  is the solution of problem (\ref{A1}).        
		 Extending this equation to the whole space by even reflection
		 \footnote{With abuse of notation we will denote with $u_{N_s}$ and $f$ their even reflection}, we are left to study the problem        
		 \begin{equation*}
                  \left\{\begin{array}{rclc}
		  (\partial_t-\Delta)u_{N_s}&=&0 \qquad & {\rm for } \quad z\in \R, t>s\,,\\
		  u_{N_s}&=&f \qquad & {\rm for } \quad  t=s\,,\\
		  \end{array}\right.        
                 \end{equation*}        
		the solution of which can be  represented via heat kernel as       
		\begin{align*}
		  u_{N_s}(x',z,t)&=\int_{\R}\Gamma(\cdot,z-\tilde z, t-s)\ast_{x'}f(\cdot,\tilde z,s)d\tilde z\\
		  &=\int_{0}^{\infty}\left[\Gamma(\cdot,\tilde z+z,t-s)+\Gamma (\cdot,\tilde z-z,t-s)\right]\ast_{x'}f(\cdot,\tilde z,s)d\tilde z\,.
		\end{align*}          		 
		 Applying $\nabla'\partial_z$ to the representation above
 		 \begin{eqnarray*}
                  &&\nabla'\partial_zu_{N_{s}}(x',z,t)\\
		 &=&\footnotesize{
 		 \begin{cases}
 		     \int_{0}^{\infty}\int_{\R^{d-1}}\Gamma_{d-1}(x'-\tilde{x'},t-s)\left(\partial_z\Gamma_1(\tilde z+z,t-s)-\partial_z\Gamma_1 ( \tilde z-z,t-s)\right)\nabla'f(\tilde{x'},\tilde z,s)d\tilde{x'}d\tilde z\,,\\
 		     \int_{0}^{\infty}\int_{\R^{d-1}}\nabla'^2\Gamma_{d-1}(x'-\tilde{x'},t-s)\left(\partial_z\Gamma_1(\tilde z+z,t-s)-\partial_z\Gamma_1 (\tilde z-z,t-s)\right)(-\Delta')^{-1}\nabla'f(\tilde{x'},\tilde z,s)d\tilde{x'}d\tilde z\,
 		    \end{cases}}
 		 \end{eqnarray*}    		
 		and averaging in the horizontal direction we obtain, on the one hand
%
                {\footnotesize{  
		\begin{eqnarray*}
                 &&\langle|\nabla'\partial_zu_{N_s}(\cdot,z,t)|\rangle'\\
                 &\lesssim&\int_{0}^{\infty}\langle|\Gamma_{d-1}(\cdot,t-s)|\rangle'|\partial_z\Gamma_1(\tilde z+z,t-s)-\partial_z\Gamma_1 (\tilde z-z,t-s)|\langle|\nabla'f(\cdot,\tilde z,s)|\rangle' d\tilde z\\
                 &\stackrel{(\ref{z0})\&(\ref{BAND2})}{\lesssim}&\frac{1}{R}\int_{0}^{\infty}|\partial_z\Gamma_1(\tilde z+z,t-s)-\partial_z\Gamma_1 (\tilde z-z,t-s)|\langle|f(\cdot,\tilde z,s)|\rangle' d\tilde z
 		 \end{eqnarray*}    }}
 		 and, on the other hand 
		  {\footnotesize{
 		\begin{eqnarray*}
                 &&\langle|\nabla'\partial_zu_{N_s}(\cdot,z,t)|\rangle'\\
 		 &\lesssim& \int_{0}^{\infty}\langle|\nabla'^2\Gamma_{d-1}(\cdot,t-s)|\rangle'|\partial_z\Gamma_1(\tilde z+z,t-s)-\partial_z\Gamma_1(\tilde z-z,t-s)|\langle|(-\Delta')^{-1}\nabla'f(\cdot,\tilde z,s)\rangle'd\tilde z\\
 		 &\stackrel{(\ref{z0})\&(\ref{BAND1})}{\lesssim}&\frac{R}{(t-s)} \int_{0}^{\infty}|\partial_z\Gamma_1(\tilde z+z,t-s)-\partial_z\Gamma_1(\tilde z-z,t-s)|\langle|f(\cdot,\tilde z,s)|\rangle'd\tilde z\,.
 		 \end{eqnarray*}  }}
		 Multiplying by the weight $\frac{1}{z}$ and integrating in $z\in(0,\infty)$ we get
		 \begin{equation*}
		\int_0^{\infty}\langle|\nabla'\partial_zu_{N_s}(\cdot,z,t)|\rangle'\frac{dz}{z}\lesssim \sup_{\tilde z}\int_0^{\infty}K_{t-s}(z,\tilde z) dz
		\begin{cases}
		\frac{1}{R}\int_{0}^{\infty}\langle|f(\cdot,\tilde z,s)|\rangle' \frac{d\tilde z}{\tilde z}\,,\\
		\frac{1}{(t-s)} R\int_{0}^{\infty}\langle |f(\cdot,\tilde z,s)|\rangle'\frac{d\tilde z}{\tilde z}\,,
		\end{cases}
		\end{equation*}    
		where we called $K_{t-s}(z,\tilde z)=\frac{\tilde z}{z}|\partial_z\Gamma_1(\tilde z-z,t-s)-\partial_z\Gamma_1( z+\tilde z,t-s)|$.
		\newline
		 Recalling 
		 $$\sup_{\tilde z}\int_0^{\infty}K_{t-s}(z,\tilde z) dz\stackrel{(\ref{EE1})}{\lesssim} \int_{\R}|\partial_z\Gamma_1(z,t-s)|dz+\sup_{z\in \R}(z^2|\partial^2_z\Gamma_1(z,t-s)|)$$
		 and observing that, in this case         
		 $$\int_{\R}|\partial_z \Gamma_1(z,t-s)|dz+\sup_{z\in \R}(z^2|\partial_z\Gamma_1(z,t-s)|)\stackrel{(\ref{x1})\&(\ref{y3})}{\lesssim}\frac{1}{(t-s)^{\frac{1}{2}}} ,$$
		 we can conclude that      		
		\begin{equation*}
		\int_0^{\infty}\langle|\nabla'\partial_zu_{N_s}(\cdot,t)|\rangle'\frac{dz}{z}\lesssim
		 \footnotesize{\begin{cases}
		\frac{1}{(t-s)^{\frac{1}{2}}}\frac{1}{R}\int_{0}^{\infty}\langle|f(\cdot,\tilde z,s)|\rangle' \frac{d\tilde z}{\tilde z}\\
		\frac{1}{(t-s)^{\frac{3}{2}}} R \int_{0}^{\infty}\langle |f(\cdot,\tilde z,s)|\rangle'\frac{d\tilde z}{\tilde z}.
		\end{cases}}
		\end{equation*}    
		Finally,  inserting (\ref{Duhamel1}) and integrating in time we have 
               \begin{eqnarray*}
               &&\int_0^{\infty}\int_0^{\infty}\langle|\nabla'\partial_z u_{N_s}(\cdot,z,t)|\rangle'\frac{dz}{z} dt \\
               &\stackrel{(\ref{Duhamel1})}{\lesssim}&\int_0^{\infty}\int_0^{\infty}\int_0^t\langle|\nabla'\partial_z u_{N_s}(\cdot,\tilde{z},t)|\rangle' \frac{dz}{z} ds dt\\
               &\lesssim&\int_s^{\infty} \int_0^{\infty}\min\{\frac{1}{R(t-s)^{\frac{1}{2}}},\frac{R}{(t-s)^{\frac{3}{2}}}\}\int_0^{\infty}\langle|f(\cdot,\tilde{z},s)|\rangle'\frac{d\tilde z}{\tilde z} ds dt\\
               &\stackrel{(\ref{Es1})\&(\ref{Es2})}{\lesssim}&\int_0^{\infty}\int_0^{\infty}\langle|f(\cdot,\tilde{z},s)|\rangle'\frac{d\tilde z}{\tilde z} ds\,.
               \end{eqnarray*}
		  \underline{Argument for (\ref{A3.1}):}
                   \newline
		  Recall that we need to prove 
		  $$\sup_z\int_0^{\infty}|\nabla'\partial_zu_C|dt\lesssim \langle|\nabla'\partial_zu|_{z=0}\rangle'\,.$$
                  By  equation (\ref{A3}), the even extension $\overline{u_C}$ satisfies                  		          
		  \begin{equation}\label{BO}
		   (\partial_t-\Delta)\overline{u_C}=-[\partial_z\overline{u_C}]\delta_{z=0}=-2\partial_z u_C\delta_{z=0}=-2\partial_z u|_{z=0}\delta_{z=0}
		  \end{equation}		          
		  and therefore we study the following problem on the whole space
                 \begin{equation}\label{A3b}
                  \left\{\begin{array}{rclc}
		   (\partial_t-\Delta)\overline{u_C}&=&-2\partial_z u|_{z=0}\delta \qquad & {\rm for } \quad z\in \R, t>0\,,\\
		  \overline{u_C}&=&0\qquad & {\rm for } \quad  t=0\,.\\
		  \end{array}\right.        
                 \end{equation}                  
                   By Duhamel's principle 
                   \begin{equation}\label{DU2}\overline{u_{C}}(x',z,t)=\int_{s=0}^{t} \overline{u_{C_s}}(x',z,t)ds ,\end{equation}
                  where $\overline{u_{C_s}}$ solves the initial value problem                       
	          \begin{equation}\label{A3bb}
                  \left\{\begin{array}{rclc}
		  (\partial_t-\Delta)\overline{u_{C_s}}&=&0 \qquad & {\rm for } \quad  z\in \R, t>s\,,\\
		  \overline{u_{C_s}}&=&-2\partial_z u|_{z=0}\delta  \qquad & {\rm for } \quad z\in \R, t=s\,.\\
		  \end{array}\right.        
                  \end{equation}                                    
		 The solution of problem (\ref{A3bb}) can be represented via the heat kernel as         
		  \begin{eqnarray*}
		   \overline{u_{C_s}}(x',z, t)&=&\int \Gamma(z-\tilde z,t-s)\ast_{x'}(-2\partial_z u|_{z=0}\delta)(\tilde z,s) d\tilde z,\\
		    &=&-2\Gamma(z,t-s)\ast_{x'}\partial_z u(z, s)|_{z=0}\,.
		  \end{eqnarray*}   
		  We apply $\nabla'\partial_z$ to the representation above
		 \begin{equation*}
                 \nabla'\partial_z\overline{u_{C_s}}(x',z,t)=
                  \int_{\R^{d-1}}-2\Gamma_{d-1}(x'-\tilde{x'},t-s)\partial_z\Gamma_1(z,t-s)\nabla'\partial_z u(\cdot,z, s)|_{z=0}d\tilde{x'}
		 \end{equation*}    		   
		 and then average in the horizontal direction, 		  
		 \begin{eqnarray*}
                 &&\langle|\nabla'\partial_z\overline{u_{C_s}}(x',z,t)|\rangle'\\
                 &\lesssim&\langle|\Gamma_{d-1}(x',t-s)|\rangle'|\partial_z\Gamma_1(z,t-s)|\langle|\nabla'\partial_z u(\cdot,z, s)|_{z=0}|\rangle'\\
                 &\stackrel{\ref{z0}}{\lesssim}&  |\partial_z\Gamma_1(z,t-s)|\langle|\nabla'\partial_z u(\tilde{x'},z, s)|_{z=0}|\rangle'\,.
		 \end{eqnarray*}    
	         Inserting the previous estimate in the Duhamel formula \ref{DU2} and integrating in time we get
	         \begin{eqnarray}
	         &&\int_0^{\infty}\langle|\nabla'\partial_z\overline{u_{C}}(x',z,t)|\rangle'dt\notag\\
	         &\leq& \int_0^{\infty}\int_0^t\langle|\nabla'\partial_z\overline{u_{C_s}}(x',z,t)|\rangle'dsdt\notag\\
	         &\lesssim& \int_0^{\infty}\int_s^{\infty}|\partial_z\Gamma_1(z,t-s)|dt\langle|\nabla'\partial_z u(\tilde{x'},z, s)|_{z=0}|\rangle' ds\notag\\
	         &\stackrel{\ref{y1}}{\lesssim}&  \int_0^{\infty}\langle|\nabla'\partial_z u(\tilde{x'},z, s)|_{z=0}|\rangle' ds\,.\label{here}
	          \end{eqnarray} 
	          The estimate (\ref{A3.1}) follows immediately after passing to the supremum in (\ref{here}).
	          \end{proof}	 
          
\section{Appendix}\label{App}

\subsection{Preliminaries}
We start this section by proving some elementary bounds and equivalences, coming directly from the 
definition of horizontal bandedness. These will turn  to be crucial in the proof of the
main result.
     \begin{lemma}\label{Bandedness}\ \\
       \begin{enumerate}
         \item[a)]   If 
                     \begin{equation}\label{B1}
                     \F r(k',z,t)=0 \quad  {\rm{ unless }} \quad R|k'|\geq 4\,
                     \end{equation} then 
                     \begin{equation}\label{BAND1}
                     \langle|r(\cdot,z,t)| \rangle'\leq R\langle|\nabla' r(\cdot,z,t)|\rangle'\,.
                     \end{equation}

                    In particular
                    $$||r||_{(0,\infty)}\leq R ||\nabla' r||_{(0,\infty)}\,.$$
        
          \item[b)] If    
                    \begin{equation}\label{B2}
                    \F r(k',z,t)=0 \quad  {\rm{ unless }} \quad R|k'|\leq 1\,
                    \end{equation} then 
                    \begin{equation}\label{BAND2}
                    \langle|\nabla'r(\cdot,z,t)| \rangle'\leq \frac{1}{R}\langle| r(\cdot,z,t)|\rangle'\,.
                    \end{equation}
                    In particular
                    $$||\nabla'r||_{(0,\infty)}\leq R ||r||_{(0,\infty)}\,.$$

         \item[c)]  If    
                    $$\F r(k',z,t)=0 \quad  {\rm{ unless }} \quad 1\leq R|k'|\leq 4 $$ then
                    \begin{equation}\label{P}
                    ||\nabla'(-\Delta')^{-\frac{1}{2}}r||_{(0,\infty)}\sim ||r||_{(0,\infty)}\,,
                    \end{equation}
                    and 
                   \begin{equation}\label{Q}
                   ||(-\Delta')^{\frac{1}{2}}r||_{(0,\infty)}\sim||\nabla' r||_{(0,\infty)}\,.
                   \end{equation}        
        \end{enumerate}
       \end{lemma}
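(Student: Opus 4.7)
The plan for all three parts is to write the operator on the left as convolution in $x'$ against an explicit kernel $K_R$, to control $\|K_R\|_{L^1(\R^{d-1})}$ by dimensional scaling, and then to take the horizontal average and apply Young's convolution inequality. The passage from the pointwise-in-$z$ averaged inequality to the interpolation norm $\|\cdot\|_{(0,\infty)}$ will be automatic: given a decomposition $g=g_0+g_1$ of the right-hand side realising the infimum in (\ref{NORM-HALF}), convolving each $g_i$ with $K_R$ will produce a decomposition of the left-hand side of the same size up to the factor $\|K_R\|_{L^1}$.

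For part (a), I will fix a smooth cut-off $\chi_\infty$ on $\R^{d-1}$ with $\chi_\infty(\xi)=1$ for $|\xi|\geq 4$ and $\chi_\infty(\xi)=0$ for $|\xi|\leq 2$. By (\ref{B1}) we have $\F r=\chi_\infty(Rk')\F r$, so using $\sum_j(ik_j')(ik_j')=-|k'|^2$ I will write
$$
\F r(k',z,t)=\sum_{j=1}^{d-1}\frac{-ik_j'}{|k'|^2}\chi_\infty(Rk')\,\F(\partial_j' r)(k',z,t),
$$
that is, $r=\sum_j K_{j,R}\ast_{x'}\partial_j' r$ with $\widehat{K_{j,R}}(k')=-ik_j'|k'|^{-2}\chi_\infty(Rk')$. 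The scaling $\widehat{K_{j,R}}(k')=R\,\widehat{K_{j,1}}(Rk')$ together with the fact that $K_{j,1}$ is Schwartz will give $\|K_{j,R}\|_{L^1(\R^{d-1})}=R\|K_{j,1}\|_{L^1}\lesssim R$, and Young's inequality (applied after periodising the kernel to $[0,L)^{d-1}$, which does not enlarge its $L^1$ norm) will yield (\ref{BAND1}). Part (b) will be strictly analogous, using a cut-off $\chi_0$ equal to $1$ on $|\xi|\leq 1$ and supported in $|\xi|\leq 2$, the identity $\F(\partial_j' r)=ik_j'\chi_0(Rk')\F r$, and the opposite scaling $\widehat{\tilde K_{j,R}}(k')=R^{-1}\widehat{\tilde K_{j,1}}(Rk')$ that gives $\|\tilde K_{j,R}\|_{L^1}\lesssim R^{-1}$.

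For part (c), I will observe that the Riesz multipliers $-ik_j'/|k'|$ composing $\nabla'(-\Delta')^{-1/2}$, as well as the multipliers relating $(-\Delta')^{\pm 1/2}$ to an individual $\partial_j'$, are all smooth on the annulus $1\leq R|k'|\leq 4$. Multiplying each by a fixed cut-off $\zeta$ supported in $\tfrac12\leq|\xi|\leq 8$ and equal to $1$ on $1\leq|\xi|\leq 4$ turns them into Schwartz symbols, whose $R$-rescaled inverse Fourier transforms are $L^1$ kernels of $L^1$ norm independent of $R$ (for the $0$-homogeneous Riesz pieces) or of norm scaling as $R^{\mp 1}$ (for the pieces carrying one extra derivative). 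Both directions of (\ref{P}) and (\ref{Q}) will then follow by Young's inequality exactly as in parts (a) and (b).

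The hard part will be purely bookkeeping: choosing the cut-offs $\chi_\infty$, $\chi_0$, $\zeta$ consistently, tracking which power of $R$ each scaling produces, and handling the passage from the horizontal Fourier transform on $\R^{d-1}$ to the Fourier \emph{series} on the torus $[0,L)^{d-1}$. The last point is routine: any kernel $K\in L^1(\R^{d-1})$ whose Fourier transform agrees on the dual lattice $\tfrac{2\pi}{L}\Z^{d-1}$ with the desired torus multiplier periodises to a kernel on $[0,L)^{d-1}$ of no larger $L^1$ norm, and the horizontal bandedness of $r$ makes these representations valid for every fixed $(z,t)$.
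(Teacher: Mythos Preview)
Your overall strategy is sound, and for parts (b) and (c) it essentially coincides with the paper's approach (the paper only spells out (a) and (b), leaving (c) to the reader). There is, however, a real slip in your argument for (a) that you should fix.

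You write $\widehat{K_{j,1}}(k')=-ik_j'\,|k'|^{-2}\chi_\infty(k')$ and then assert that $K_{j,1}$ is Schwartz. This is false: $\chi_\infty$ equals $1$ for $|k'|\ge 4$, so $\widehat{K_{j,1}}$ agrees with the homogeneous symbol $-ik_j'/|k'|^{2}$ at infinity and decays only like $|k'|^{-1}$. Its inverse Fourier transform therefore has a singularity at the origin (of order $|x'|^{-(d-2)}$ for $d\ge 3$, logarithmic for $d=2$) and is not Schwartz. What \emph{is} true, and is all you need, is that $K_{j,1}\in L^{1}(\R^{d-1})$; this follows from the symbol bounds $|\partial^{\alpha}\widehat{K_{j,1}}(k')|\lesssim(1+|k'|)^{-1-|\alpha|}$ via a dyadic decomposition of $\widehat{K_{j,1}}$ into pieces supported in $|k'|\sim 2^{\ell}$, each contributing $O(2^{-\ell})$ to $\|K_{j,1}\|_{L^{1}}$. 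Once this is supplied, your proof of (a) goes through.

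It is worth contrasting with the paper's route for (a), which sidesteps this issue. After rescaling to $R=1$, the paper picks a Schwartz $\phi$ with $\widehat\phi$ supported in $\{|k'|\le 1\}$ and $\int\phi=1$, notes that $\phi\ast_{x'}r=0$ by disjointness of Fourier supports, and writes
\[
r(x')=r(x')-\phi\ast_{x'}r(x')=\int\phi(x'-y')\bigl(r(x')-r(y')\bigr)\,dy'=\psi\ast_{x'}\nabla' r,
\]
the last identity coming from the fundamental theorem of calculus along $[y',x']$; the explicit kernel $\psi(x')=\int_{0}^{1}\phi(-x'/t)\,x'\,t^{-d}\,dt$ is seen to be $L^{1}$ by a one-line change of variables. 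This is more elementary than the symbol-class argument your approach requires. Your Fourier-multiplier method is more systematic and extends uniformly to (c), but for (a) alone it costs you the additional step of justifying $K_{j,1}\in L^{1}$.
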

                  
      \begin{remark}
       All the results stated in Lemma \ref{Bandedness} are valid with the norm $||\cdot||_{(0,\infty)}$ replaced with $||\cdot||_{(0,1)}$.
      \end{remark}
      \begin{remark}
      Notice that from (\ref{P}) and (\ref{Q}), it follows
      \begin{equation}\label{R}
      ||\nabla'(-\Delta')^{-1}\nabla'\cdot r||_{(0,\infty)}\lesssim ||r||_{(0,\infty)}\,.
      \end{equation}  
      \end{remark}

\begin{proof}\ \\

  \begin{enumerate}
    \item[a)] By rescaling we may assume $R=1$.
 
	      Let $\phi\in\ES (\R^{d-1})$ be a Schwartz function such that 

	      $$\F \phi(k')=\begin{cases}
			      0 & \mbox{ for }  |k'|\geq 1\\
			      1 & \mbox{ for } |k'|\leq 1
			     \end{cases}$$					
	      and such that $\int_{\R^{d-1}} \phi(x')dx'=1.$

	      We claim that, under assumption (\ref{B1}), there exists $\psi \in L^1(\R^{d-1})$ such that 
	      \begin{equation}\label{O}
		(\rm{Id}-\phi\ast')r= \psi\ast'\nabla r\,.
	      \end{equation}
	      Since $r=r-\phi\ast r$, if we assume (\ref{O}) the conclusion follows from Young's inequality
	      $$ \int_{\R^{d-1}}|r(x',z)|dx'\leq \int_{\R^{d-1}}|\psi(x')|dx'\int_{\R^{d-1}}|\nabla r(x',z)|dx'\,.$$
	      \underline{Argument for (\ref{O}):}
	      \newline
	      Using the assumptions on $\phi$ and performing suitable change of variables, we find
	      \begin{eqnarray*}
	      &&r(x',z)-\int \phi(x'-y')r(y',z) dy'\\
	                    &=&\int\phi(x'-y')(r(x',z)-r(y',z)) dy'\\
                           &=&\int_{\R^{d-1}}\phi(x'-y')\int_0^1 (x'-y')\nabla' r(tx'+(t-1)(x'-y'), z) dy'dt\\
                           &=&\int_0^1\int_{\R^{d-1}}\phi(\xi)\nabla'r(x'+(t-1)\xi,z)\cdot \xi d\xi dt\\
                           &=&\int_0^1\int_{\R^{d-1}}\phi\left(\frac{\hat y'-x'}{t}\right)\nabla r(\hat y',z)\cdot \frac{\hat y'-x'}{t} dt \frac{1}{t^{d-1}}d\hat y'\\
                           &=&\int_{\R^{d-1}}\nabla' r(\hat y',z)\cdot \left(\int_0^1\phi\left(\frac{\hat y'-x'}{t}\right)\frac{\hat y'-x'}{t^{d}} dt\right) d\hat y'\\
                           &=&\int_{\R^{d-1}}\nabla' r(\hat y',z)\psi\left(\frac{\hat y'-x'}{t}\right)d\hat y',
	     \end{eqnarray*}
	     where 
	    $$\psi(x')=\int_0^1\phi\left(\frac{-x'}{t}\right)\frac{x'}{t^{d}}dt\,.$$
	    We notice that $\psi\in L^1(\R^{d-1})$, in fact
	    $$\int_{\R^{d-1}}|\psi(x')|dx'\leq \int_0^1 \int_{\R^{d-1}}|\phi(x'/t)\frac{x'}{t^{d}}| dx' dt=\int_{\R^{d-1}}|\phi(\xi)\xi|d\xi\,.$$
  
  \item[b)]
	  In Fourier space we have 
	  $$\F\nabla'r(k',z)=ik'\F r(k',z)=R^{-1}\F G(Rk')\F r(k',z)=R^{-1}\F G_R(k')\F r(k',z),$$
	  where $G$ is a Schwartz function and $G_{R}(x')=R^{-d}\F G(x'/R)$. Since 
	  $\int |G_R|dx'=\int |G| dx'$ is independent of $R$, we may conclude by Young
	  $$\int |\nabla' r| dx'\leq \frac{1}{R}\int |G_R| dx'\int |r| dx'\lesssim \frac{1}{R}\int |r| dx'\,. $$
	  
  \end{enumerate}

 \end{proof}

       
Here we prove an elementary estimate that will be applied in the argument for (\ref{A1.1}) and (\ref{A2.1}), Lemma \ref{lemma3}
\begin{lemma}\label{Lemma5}\ \\
Let $K=K(z)$ be a real function and define
 $$\overline{K}(z,\tilde z)=\frac{\tilde z}{z}|K(\tilde z- z)-K( z+\tilde z)|\,.$$
Then
\begin{equation}\label{EE1}
\sup_{\tilde z}\int_0^{\infty}\overline{K}(z,\tilde z) dz\lesssim \int_{\R}|K(z)|dz+\sup_{z\in \R}(z^2|\partial_zK(z)|)\,.
\end{equation}
 \end{lemma}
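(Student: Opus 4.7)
The plan is to split the integral $\int_0^\infty\overline K(z,\tilde z)\,dz$ at the point $z=\tilde z/2$: in the inner region $z\le\tilde z/2$ the weight $\tilde z/z$ is singular and the proof must exploit the cancellation between $K(\tilde z-z)$ and $K(\tilde z+z)$; in the outer region $z>\tilde z/2$ the weight is bounded by $2$ and the triangle inequality together with $\|K\|_{L^1(\R)}$ will suffice.

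For the inner region I would apply the fundamental theorem of calculus in the form
$$K(\tilde z+z)-K(\tilde z-z)=\int_{-z}^{z}\partial_w K(\tilde z+s)\,ds.$$
When $z\le\tilde z/2$ and $s\in[-z,z]$ the argument satisfies $\tilde z+s\ge\tilde z/2$, so the decay hypothesis yields
$$|\partial_w K(\tilde z+s)|\le\frac{\sup_{w\in\R}w^2|\partial_w K(w)|}{(\tilde z+s)^2}\lesssim\frac{\sup_{w\in\R}w^2|\partial_w K(w)|}{\tilde z^{\,2}}.$$
Integrating in $s$ gives $|K(\tilde z+z)-K(\tilde z-z)|\lesssim (z/\tilde z^{\,2})\sup_{w}w^2|\partial_w K(w)|$, so the factor $z$ cancels the singular weight in $\overline K$. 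Integrating over $(0,\tilde z/2)$ then produces
$$\int_0^{\tilde z/2}\overline K(z,\tilde z)\,dz\lesssim \int_0^{\tilde z/2}\frac{1}{\tilde z}\sup_{w}w^2|\partial_w K(w)|\,dz\lesssim \sup_{w}w^2|\partial_w K(w)|,$$
uniformly in $\tilde z>0$.

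For the outer region I would use $\tilde z/z\le 2$ together with the triangle inequality to get $\overline K(z,\tilde z)\le 2(|K(\tilde z-z)|+|K(\tilde z+z)|)$. The changes of variable $y=\tilde z-z$ and $y=\tilde z+z$ turn the two pieces of the integral over $(\tilde z/2,\infty)$ into $\int_{-\infty}^{\tilde z/2}|K(y)|\,dy$ and $\int_{3\tilde z/2}^{\infty}|K(y)|\,dy$, each dominated by $\|K\|_{L^1(\R)}$ uniformly in $\tilde z$. Adding the inner and outer contributions and taking the supremum over $\tilde z$ yields \eqref{EE1}.

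The only real choice in the argument is the splitting point $z=\tilde z/2$: it must be large enough that $\tilde z+s\ge\tilde z/2$ remains valid on the inner side (so that the pointwise decay $w^2|\partial_w K(w)|\lesssim 1$ can be invoked), and small enough that $\tilde z/z$ stays bounded on the outer side. Any fixed positive fraction of $\tilde z$ would work equally well, and I do not foresee a genuine obstacle.
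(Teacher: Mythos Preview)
Your proof is correct and follows essentially the same approach as the paper: both split the integral at $z=\tilde z/2$, handle the outer region via $\tilde z/z\le 2$ and the triangle inequality against $\|K\|_{L^1(\R)}$, and handle the inner region via the fundamental theorem of calculus together with the pointwise bound $|\partial_z K(\tilde z+s)|\lesssim \tilde z^{-2}\sup_w w^2|\partial_w K(w)|$. Your bookkeeping in the inner region is in fact a bit more direct than the paper's, which introduces an extra change of variable before reaching the same conclusion.
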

 
 \begin{proof}
Let us distinguish two regions:  $\frac{1}{2}\left|\frac{\tilde{z}}{z}\right|<1$ and $\frac{1}{2}\left|\frac{\tilde{z}}{z}\right|>1$. 
\newline
For $|z|\geq \frac{1}{2}|\tilde z|$ we have
\begin{eqnarray*}
&&\sup_{\tilde z}\int_{|z|\geq \frac{1}{2}|\tilde z|}|\overline{K}(z,\tilde z)|dz\\
&\leq &\max_{\tilde z}\int_{|z|\geq \frac{1}{2}|\tilde z|}|K(\tilde z-z)-K(z+\tilde z)|dz\lesssim \int|K(z)|dz\,.\\
\end{eqnarray*}
While for the region $|z|\leq \frac{1}{2}|\tilde z|$ we have,
\begin{eqnarray*}
&&\max_{\tilde z}|\tilde z|\int_{|z|\leq\frac{1}{2}|\tilde z|}\frac{1}{|z|}|K(\tilde z-z)-K(z+\tilde z)|dz\\
&=&\max_{\tilde z}|\tilde z|\int_{|z|\leq\frac{1}{2}|\tilde z|}\frac{1}{|z|}\left|\int_{-1}^1K'(\tilde z+t z) zdt\right|dz\\
&\leq&\max_{\tilde z}|\tilde z|\int_{-1}^1\frac{1}{t}\int_{|z|\leq\frac{t}{2}|\tilde z|}|K'(\tilde z+ z) |dzdt\\
&\stackrel{\frac{1}{2}|\tilde z|\leq |\tilde z+z|}{\leq}& \max_{\tilde z}\int_{-1}^1\frac{1}{t}\int_{|z|\leq\frac{t}{2}|\tilde z|}2|\tilde z+z||K'(\tilde z+ z)|dt dz\\
&\leq&\max_{\tilde z}\int_{-1}^1\frac{2}{t}\max_{|z|\leq\frac{t}{2}|\tilde z|}\left\{|\tilde z+z||K'(\tilde z+ z)|\right\}\left(\int_{|z|\leq\frac{t}{2}|\tilde z|} dz\right)dt\\
&=&\max_{\tilde z}\int_{-1}^1\frac{1}{t}\max_{|z|\leq\frac{t}{2}|\tilde z|}\{|\tilde z+z||K'(\tilde z+ z)|\} t|\tilde z|dt\\
&=&2\max_{\tilde z}|\tilde z|\max_{|z|\leq\frac{t}{2}|\tilde z|}\{|\tilde z+z||K'(\tilde z+ z)|\}\\
&\stackrel{\frac{1}{2}|\tilde z|\leq |\tilde z+z|}{\leq}&4\max_{\tilde z}\max_{|z|\leq\frac{t}{2}|\tilde z|}\{| z+\tilde z|^2|K'(\tilde z+ z)|\}\,.\\ 
\end{eqnarray*}
In conclusion we have 
\[\max_{ z}\int|\bar K(z,\tilde z)|dz\lesssim\int|K(z)|dz+\max_{ z}|z|^2|K'(z)| \,.\]
\end{proof}

\subsection{Heat kernel: elementary estimates}
In this section we recall the definition of the heat kernel and some 
properties and estimates that we will use throughout the paper.
\newline
The function $\Gamma:\R^{d}\times \R\rightarrow \R$ is defined as
$$\Gamma(x,t)=\frac{1}{t^{d/2}}\exp(-\frac{|x|^2}{4t})$$
and we can rewrite it as 
$$\Gamma(x,t)=\Gamma_1(z,t)\Gamma_{d-1}(x',t) \qquad x'\in \R^{d-1}, z\in \R,$$
 where $$\Gamma_1(z,t)=\frac{1}{t^{1/2}}\exp(-\frac{z^2}{4t})$$ and 
 $$\Gamma_{d-1}(z,t)=\frac{1}{t^{(d-1)/2}}\exp(-\frac{|x'|^2}{4t})\,.$$ 
 
Here we list the bounds on the derivatives of $\Gamma$ that are used in Section \ref{App}, Lemma \ref{lemma3}: 
\begin{enumerate}
  \item  \begin{equation}\label{z0}
         \langle|(\nabla')^n\Gamma_{d-1}|\rangle'\approx \frac{1}{t^{\frac{n}{2}}}\,.
         \end{equation}
 \item  \begin{equation}\label{x1}
        \int_{\R}|\partial_z^{n}\Gamma_1|dz\lesssim \frac{1}{t^{\frac{n}{2}}}\,.
        \end{equation}
\item   \begin{equation}\label{y1}
        \int_{0}^{\infty}|\partial_z\Gamma_{1}(z,t)|dt=\int_0^{\infty}\left|\frac{1}{\hat{t}^{3/2}}\exp{(-\frac{1}{4\hat{t}})}\right|d\tilde t\lesssim 1\,,
        \end{equation}
        where we have used the change of variable $\hat{t}=\frac{t}{z^2}$.
\item   \begin{equation}\label{y2}
        \sup_{z\in \R}\left(z|\partial_z\Gamma_1(z,t)|\right)=\sup_{\xi}\left|\frac{1}{t^{\frac{1}{2}}}\xi^{2}\exp^{-\xi^2}\right|\lesssim \frac{1}{t^{\frac{1}{2}}}\,,
        \end{equation}
        where we have used the change of variable $\xi=\frac{z}{t^{\frac{1}{2}}}$\,.
\item   \begin{equation}\label{y3}
        \sup_{z\in \R}\left(z^2|\partial_z\Gamma_1(z,t)|\right)=\sup_{\xi}\left|\xi^{3}\exp^{-\xi^2}\right|\lesssim 1\,,
        \end{equation}
       where we have used the change of variable $\xi=\frac{z}{t^{\frac{1}{2}}}$\,.
\end{enumerate}



\bibliographystyle{unsrt}
\bibliography{Biblio}

\end{document}